\theoremstyle{plain}
\newtheorem{theorem}{Theorem}[section]
\newtheorem{prop}[theorem]{Proposition}
\newtheorem{lemma}[theorem]{Lemma}
\newtheorem{coro}[theorem]{Corollary}
\theoremstyle{definition}
\newtheorem{definition}[theorem]{Definition}
\newtheorem{remark}[theorem]{Remark}
\newtheorem{example}[theorem]{Example}
\newcommand{\Z}{{\mathbb Z}}
\newcommand{\R}{{\mathbb R}}
\newcommand{\N}{{\mathbb N}}
\newcommand{\C}{{\mathbb C}}
\newcommand{\mc}{\mathcal}
\newcommand{\card}{\operatorname{card}}
\newcommand{\me}{\mathrm{e}}
\newcommand*\diff{\mathop{}\!\mathrm{d}}
\newcommand{\Mid}{:}
\newcommand{\supp}{\operatorname{supp}}
\tikzset{
  LabelStyle/.style = { rectangle, rounded corners, draw,
                        minimum width = 1em, 
                        font =  },
  VertexStyle/.append style = { inner sep=3pt,
                                font = \large},
  EdgeStyle/.append style = {->, bend left} }
\DeclarePairedDelimiter\abs{\lvert}{\rvert}
\DeclarePairedDelimiter\norm{\lVert}{\rVert}
\begin{document}

\title[Ergodic frequency measures for random substitutions]{
Ergodic frequency measures \\[2mm] for random substitutions
}

\author{Philipp Gohlke}
\address{Fakult\"at f\"ur Mathematik, Universit\"at Bielefeld, \newline
\hspace*{\parindent}Postfach 100131, 33501 Bielefeld, Germany}
\email{pgohlke@math.uni-bielefeld.de}

\author{Timo Spindeler}
\address{Department of Mathematical and Statistical Sciences, \newline
\hspace*{\parindent}632 CAB, University of Alberta, Edmonton, AB, T6G 2G1, Canada}
\email{spindele@ualberta.ca}

\begin{abstract}
We construct a family of ergodic measures on random substitution subshifts (RS-subshifts) associated to a primitive random substitution. In particular, the word frequencies of every finite legal word exist for almost every element of the random substitution subshift with respect to these measures. As an application, we show that for a certain class of random substitutions the measures of maximal entropy are frequency measures.
\end{abstract}

\keywords{Random substitutions, ergodic measures, measures of maximal entropy}

\subjclass[2010]{
37B10, 37A25, 37A50, 52C23}

\maketitle

\section{Introduction}\label{SEC:intro}
Deterministic substitutions and the corresponding dynamical systems are well-studied objects in symbolic dynamics, which give rise to applications in other branches of mathematics and science \cite{baake,fogg,queffelec}. 
Early efforts to understand the properties of randomised versions of these substitutions were made in a pioneering work by Peyri\`{e}re \cite{peyriere}. Later, the concept of random substitutions also evoked interest in the physics community \cite{gl,htw}. 
Over the decades, random substitutions have appeared in various disguises and contexts, including $0L$-systems in formal language theory \cite{rozenberg} and expansion-modification systems \cite{Li} that where used to model long-term correlation decay in DNA sequences \cite{salgado,mansilla}. However, a systematic study of random substitutions is still in its infancy. Some basic topological properties were studied in \cite{rs}, a study of periodic points associated to random substitution subshifts was undertaken in \cite{rust}. When passing to the associated subshift, it is desirable to construct measures that keep the structure of the underlying stochastic process meaningful, stressing the point of view that it can be considered as some limiting object of the iterated random substitution. Recently, for a certain class of random substitutions, equilibrium measures have been constructed that are invariant under the substitution action \cite{maldonado}. In this work, we are concerned with measures which are \emph{shift}-invariant, a property which is usually taken as a starting point for assigning objects like diffraction measures or the spectrum of a Schr\"odinger operator to the subshift. In diffraction theory, random substitutions are used to generalise results from the deterministic setting \cite{bss,zaks}. For instance, it is a well-known fact that a point set which comes from certain primitive substitutions gives rise to a pure point diffraction measure, and, consequently, to a pure point dynamical spectrum \cite{bl}. In contrast, in the case of randomised versions of these substitutions, one may obtain a mixed diffraction spectrum and a mixed dynamical spectrum, consisting of a pure point part and an absolutely continuous part \cite{moll2,bss}. The ergodicity of the associated measures plays a vital role to ensure that this diffraction spectrum is the same for almost every realisation. Similarly, the generic choice of a Schr\"odinger spectrum as an almost-surely constant object, relies on the existence of an ergodic measure \cite{damanik}. Therefore, ergodicity is a desirable property of measures that arise from random substitutions. 

We are going to construct these measures in the following way. It is a well-known fact that, if $\rho$ is a primitive substitution, its hull $\mathbb{X}_{\rho}$ is strictly ergodic under the $\Z$-action of the shift \cite[Thm. 4.3]{baake}. Moreover, if $x\in\mathbb{X}_{\rho}$ and $v$ is a subword of $x$, then $\mu([v])$ is given by the word frequency of $v$ in $x$ (which exists uniformly due to Oxtoby's theorem), where $\mu$ is the unique shift invariant and ergodic measure on $\mathbb{X}_{\rho}$ and $[v]$ is the cylinder set which corresponds to $v$. We would like to transfer this idea to random substitutions. Here, we are no longer in the strictly ergodic setting. Instead, we \emph{choose} a measure $\mu$ that is compatible with the word frequencies that arise almost surely in the limit of large inflation words. We establish the ergodicity of $\mu$ by showing that the word frequencies of all legal words exist and coincide for $\mu$-almost all possible realisations. The ergodicity of the frequency measures arising from a certain class of random substitutions was previously claimed in \cite[Thm.~4.22]{moll}. The proof, however, contains a small gap, since it relies on the independence of certain random words, which is only asymptotically true and thus requires some extra work.

The paper is organised as follows. In Section~\ref{sec:rand_subst}, we introduce random substitutions and describe them as Markov processes. Here, we follow the approach presented in \cite{koslicki}, which was also implicitly used in \cite{moll}. The corresponding probability measure is used in Section~\ref{sec:word_freq_exp} to establish well-defined word frequencies in expectation. 
In Section~\ref{sec:word_freq-P-as}, we generalise this result to the almost-sure existence of word frequencies. From these, we construct a shift-invariant measure on the stochastic hull of the random substitution in Section~\ref{sec:main-result}, and show that it is ergodic, comprising the main result of this paper.
In the last section, we compute the metric entropy of a class of frequency measures for an explicit example. Moreover, we show that for a certain class of random substitutions one of the frequency measures is indeed the measure of maximal entropy.

\section{Random Substitutions} \label{sec:rand_subst}

The definition of a random substitution, presented in this section, basically follows the notion of a \emph{substitution Markov chain}, introduced under that name in \cite{koslicki} and going back to \cite{peyriere}. However, our notation is slightly different to parallel the one used for deterministic substitutions \cite{baake}. It is mainly in accordance with \cite{moll}. 

Given a finite alphabet $\mathcal{A}$, a word in $\mathcal{A}$ is any finite concatenation of symbols in $\mathcal{A}$. The length of a word $u$ is denoted by $|u|$. For a given word $u = u_1 \cdots u_n$ and two positions $k,m \in \mathbb{N}$ with $1 \leqslant k \leqslant m \leqslant n$, we define $u_{[k,m]} = u_k \cdots u_m$. The number of occurrences of a word $v= v_1 \cdots v_k$ within a word $u = u_1 \cdots u_n$ is given by $|u|_v = \card \{ 0\leqslant i \leqslant n-k : u_{[i+1,i+k]} = v \}$, which is $0$ in case that $k > n$. If $|u|_v \geqslant 1$, we say that $v$ is a subword of $u$ and we write $v \triangleleft u$ in that situation. Let $\mathcal{A}^{+} = \cup_{n \geqslant 1} \mc A^n$ denote the set of finite words in $\mathcal{A}$. Assuming the alphabet is of size $m = \card \mathcal{A}$, the \emph{Abelianisation} $\Phi$ is a map 
\[
\Phi \colon \mc A^{+} \to \mathbb{R}^m, \quad \Phi(u)_a \, = \, |u|_a, \, \mbox{for all } a \in \mc A.
\]
Given $u,v \in \mc A^{+}$ we write the concatenation of $u$ and $v$ as $uv$. To avoid confusion, we will index words by superscripts, that is, $u = u^1 \cdots u^m$ denotes a concatenation of words $u^1, \ldots, u^m \in \mc A^+$, whereas $u = u_1 \ldots u_n$ is the decomposition of $u$ into letters.
We equip $\mathcal{A}^{+}$ with the discrete topology. Thus, $\mathcal{B} = \mathcal{P}(\mathcal{A}^{+})$, with $\mathcal{P}(\mathcal{A}^{+})$ the power set of $\mathcal{A}^{+}$, gives the $\sigma\text{-algebra}$ of Borel sets.
We denote by $\mathcal{A}^{\mathbb{Z}}$ the set of all bi-infinite sequences in $\mathcal{A}$ and we set $x_{[k,m]} = x_k \cdots x_m$ for all $x \in \mathcal{A}^{\mathbb{Z}}$ and $k\leqslant m \in \mathbb{Z}$. The shift-action on $\mc A^{\mathbb{Z}}$ is defined via $(Sx)_i = x_{i+1}$ for all $x \in \mc A^{\mathbb{Z}}$ and $i \in \mathbb{Z}$.

\begin{definition}
A \emph{random word} on a probability space $(\Omega, \mc F, \mathbb P)$ is a measurable function $\mathcal U \colon (\Omega, \mc F) \to (\mc A^+, \mc P(\mc A^+))$. The \emph{distribution} of $\mc U$ is the probability measure $\mathbb{P} \circ \mc U^{-1}$ on $(\mc A^+, \mc P(\mc A^+))$. We call $u \in \mc A^+$ a \emph{realisation} of $\mc U$ if $\mathbb P [\mc U = u] := \mathbb P (\mc U^{-1}(\{u\}) ) > 0$.
\end{definition}

\begin{remark}
Given a random word $\mc U$ on $(\Omega, \mc F, \mathbb P)$, notions like length or Abelianisation that are defined for words naturally extend to actions on $\mc U$. More precisely, if $f \colon \mc A^+ \to X$, for a measurable space $(X, \mc G)$, we define
\[
f (\mc U) \, := \, f \circ \mc U  \colon \Omega \to X. 
\]
Note that $f(\mc U)$ is measurable without further requirements because we have chosen $\mc P(\mc A^+)$ as the sigma-algebra of words. If $X = \mathbb{R}$, it thus defines a random variable on $(\Omega, \mc F, \mathbb P)$. In particular, $|\mc U|$ is a well-defined random variable on $(\Omega, \mc F, \mathbb P)$, and  $\Phi(\mc U)$ a random vector. Similarly, $\mc U_1$ is a random word (in fact a random letter) defined as $g \circ \mc U$, where $g \colon \mc A^+ \to \mc A, \, u_1 \cdots u_m \mapsto u_1$. 
\end{remark}

\begin{definition}
If $\mc U$ and $\mc V$ are random words on the same probability space $(\Omega, \mc F, \mathbb P)$, we define the \emph{concatenation} of $\mc U$ and $\mc V$ as the random word
\[
\mc U \mc V \colon \Omega \to \mc A^+, \quad (\mc U \mc V)(\omega) \, =\,  \mc U(\omega) \mc V(\omega).
\]
We call $\mc U \mc V$ an independent concatenation if $\mc U$ and $\mc V$ are independent random words, that is $\mathbb{P} \circ (\mc U, \mc V)^{-1} = \mathbb{P} \circ \mc U^{-1} \otimes \mathbb P \circ \mc V^{-1}$ is the product measure on $(\mc A^+)^2$. 
\end{definition}

\begin{remark}
If $\mc U$ and $\mc V$ are defined on different probability spaces $(\Omega_1, \mc F_1, \mathbb P_1)$ and $(\Omega_2, \mc F_2, \mathbb P_2)$ respectively, we can still define an independent concatenation of $\mc U$ and $\mc V$ on the product space $(\Omega_1 \times \Omega_2, \mc F_1 \otimes \mc F_2, \mathbb P_1 \otimes \mathbb P_2)$. This is obtained via $\mc U \mc V (\omega_1, \omega_2) = \mc U(\omega_1) \mc V(\omega_2)$.
\end{remark}

%
%
%
%

The intuitive idea of a random substitution is the following. Given $u \in \mc A^+$, replace each letter in $u$ at random and independently by a word in $\mc A^+$. The distribution for such a replacement shall depend only on the type of the letter that is being replaced. We thus obtain a random word. If we condition on the event that this random word takes a specific value in $\mc A^+$, we can iterate the process. This leads to the structure of a stationary Markov chain with countable state space.
\\We proceed by introducing the data that is needed to define a random substitution. Let us denote by $\mc M_{f}(\mc A^+)$ the space of probability measures on $\mc A^+$ with finite support.

\begin{definition}
A \emph{random substitution rule} on a finite alphabet $\mc A$ is a map 
\[
P^{\vartheta} \colon \mc A \to \mc M_f(\mc A^+), \quad a \mapsto P^{\vartheta}_a.
\] 
The corresponding \emph{transition kernel} is a map $P \colon \mc A^+ \times \mc A^+ \to [0,1]$ with
\[
P(u_1 \cdots u_m, v) \, = \, \sum_{\substack{v^1,\ldots, v^m \in \mc A^+ \\ v^1 \cdots v^m = v}} P^{\vartheta}_{u^{}_1}(v^1) \cdots P^{\vartheta}_{u^{}_m}(v^m).
\]
\end{definition}

In particular, $P(a,v) = P^{\vartheta}_a(v)$ for $a \in \mc A, v \in \mc A^+$ and $P(u,\cdot)$ is a probability measure in $\mc M_f(\mc A^+)$ for all $u \in \mc A^+$. In analogy to transition matrices, powers of the transition kernel $P$ are defined inductively via $P^{k+1}(u,v) = \sum_{w \in \mc A^+} P^k(u,w) P(w,v)$, for all $u,v \in \mc A^+$. 

\begin{remark}
\label{Rem:trans-kernel}
Let us expand a bit on the structure of the transition kernel. Suppose $\mc U_1, \ldots, \mc U_m$ are independent random words with distribution $P^{\vartheta}_{u_1}, \ldots, P^{\vartheta}_{u_m}$, respectively. Then, it is a straightforward calculation to verify that the distribution of $\mc U_1 \cdots \mc U_m$ is given by $P(u_1\cdots u_m, \cdot)$. In other words, $P(u,\cdot)$ describes the distribution if each letter in $u$ is mapped independently to a random word, according to the random substitution rule and the resulting random words are concatenated in the same order as the original letters.
It is worth mentioning that the independence assumption might be relaxed leading to a more general concept of a random substitution, coined \emph{M-system} in \cite{peyriere}. This corresponds to a different choice of the transition kernel.
\end{remark}

Given a random substitution rule $P^{\vartheta}$, we want to define a random substitution $\vartheta$ as a map between random words. There is a technical subtlety in that we need to enlarge the probability space of the original random word in order to make enough `space' for the new one. Suppose $\mc U$ is a random word on $(\Omega_1, \mc F_1, \mathbb P_1)$ and let $(\Omega_2, \mc F_2, \mathbb P_2)$ be a second probability space. With slight abuse of notation we can extend $\mc U$ to a random variable on $(\Omega_1 \times \Omega_2, \mc F_1 \otimes \mc F_2, \mathbb{P}_1 \otimes \mathbb P_2)$, via $\mc U((\omega_1, \omega_2)):= \mc U(\omega_1)$ for all $(\omega_1, \omega_2 ) \in \Omega_1 \times \Omega_2$. Note that $(\mathbb P_1 \otimes \mathbb P_2) \circ \mc U^{-1} = \mathbb P_1 \circ \mc U^{-1}$, so $\mc U$ induces the same distribution on $\mc A^+$ in both cases.

\begin{definition}
\label{Def:random-substitution}
Let $P^{\vartheta}$ be a random substitution rule with transition kernel $P$. A corresponding \emph{random substitution} $\vartheta$ is a map between random words with the following properties. If $\mc U$ is a random word on $(\Omega_1, \mc F_1, \mathbb P_1)$ we define $\vartheta(\mc U)$ to be a random word on $(\Omega, \mc F, \mathbb P) = (\Omega_1 \times \Omega_2, \mc F_1 \otimes \mc F_2, \mathbb P_1 \otimes \mathbb P_2)$ for some probability space $(\Omega_2, \mc F_2, \mathbb P_2)$, satisfying
\begin{equation}
\label{Eq:RS-def}
\mathbb P [\vartheta(\mc U) = v \mid \mc U = u] \, = \, P(u,v),
\end{equation}
for all $u,v \in \mc A^+$. 
\end{definition} 

\begin{remark}
For a fixed $P^{\vartheta}$, there is some freedom to choose the probability space $(\Omega_2, \mc F_2, \mathbb P_2)$ and the specific form of $\vartheta(\mc U)$ as a measurable function on $(\Omega, \mc F, \mathbb P)$ for a given $\mc U$. However, the distribution $\mathbb P \circ (\mc U, \vartheta(\mc U))^{-1}$ on $\mc A^+ \times \mc A^+$ is unambiguously fixed by \eqref{Eq:RS-def}. See the Appendix for an explicit choice of $(\Omega_2, \mc F_2, \mathbb P_2)$ and $\vartheta$, showing that a random substitution always exists.
\end{remark}

Let us discuss some special cases. First, suppose $\Omega_1 = \{a\}, \mathbb P_1 = \delta_a$ and $\mc U(a) = a$ is the trivial random variable equal to $a \in \mc A$. In that case, we write $\vartheta(a) := \vartheta(\mc U)$ and we find 
$\mathbb P [\vartheta(a) = v] = \mathbb P[\vartheta(a) = v \mid \mc U = a ] = P(a,v) = P_a^{\vartheta}(v)$ for all $v \in \mc A^+$. That is, the distribution of $\vartheta(a)$ on $\mc A^+$ is given by $P_a^{\vartheta}$. If $\supp P^{\vartheta}_a = \{v^1,\ldots,v^m\}$ and $P^{\vartheta}_a(v^j) = p_j$ for $j \in \{1,\ldots,m\}$, we represent this distribution in the following form
\[
\vartheta(a) = \begin{cases}
v^1, \quad \mbox{with probability} \; p_1,
\\ \, \vdots
\\ v^m \quad \mbox{with probability} \; p_m,
\end{cases}
\]
compare also the notation in \cite{rs}. This is consistent with the observation $\mathbb P [\vartheta(a) = v^j] = p_j$, for $j \in \{1,\ldots,m\}$, by the above calculation. Note that this holds also in the slightly more general situation that the distribution of $\mc U$ is given by $\delta_a$.

\begin{example}
\label{Ex:r-fib-1}
Let $\mc A = \{a,b \}$. The random Fibonacci substitution rule is given by the two probability distributions $P^{\vartheta}_a$ and $P^{\vartheta}_b$, with $P^{\vartheta}_a(ab) = p_1$, $P^{\vartheta}_a(ba) = p_2 = 1-p_1$ and $P^{\vartheta}_b(a) = 1$. Alternatively, in line with the previous discussion,
\begin{equation}
\label{Eq:r-fib}
\vartheta \colon b \mapsto a, \quad
 a \mapsto \begin{cases}
ab, \quad \mbox{with probability} \, p_1,
\\ ba, \quad \mbox{with probability} \, p_2, 
\end{cases}
\end{equation}
for the distribution of a corresponding random substitution acting on letters. Note that this contains the same information as the random substitution rule. It therefore suffices in order to determine the transition kernel $P$ and hence the common distribution of $(\mc U, \vartheta(\mc U))$ for an arbitrary random word $\mc U$. We call a random substitution with distributions fixed by \eqref{Eq:r-fib} random Fibonacci substitution. Glossing over the ambiguity in the choice of the underlying probability space, we say that \eqref{Eq:r-fib} determines the random substitution $\vartheta$. 
\end{example}

Let us fix an initial random word $\mc U \in \mc A^+$ and construct a sequence of random words $(\vartheta^n(\mc U))_{n \in \N}$ by iterating the random substitution $\vartheta$. 
For $j \in \N$, $\vartheta^j(\mc U)$ is a random word on a probability space $(\bigtimes_{k=1}^{j+1} \Omega_k, \bigotimes_{k=1}^{j+1} \mc F_k, \bigotimes_{k=1}^{j+1} \mathbb P_k)$. We can thus embed all $\vartheta^j(\mc U)$ as functions on a probability space $(\Omega_{\mc U}, \mc F_{\mc U}, \mathbb P_{\mc U}) = (\bigtimes_{k=1}^{\infty} \Omega_k, \bigotimes_{k=1}^{\infty} \mc F_k, \bigotimes_{k=1}^{\infty} \mathbb P_k)$, depending only on the first $j+1$ coordinates. By definition, $(\vartheta^n(\mc U))_{n \in \N}$ forms a stationary Markov chain on $(\Omega_{\mc U}, \mc F_{\mc U}, \mathbb P_{\mc U})$ with  transition probabilities 
\[
\mathbb P_{\mc U} [\vartheta^{n+1}(\mc U) = w \mid \vartheta^n(\mc U) = v]
\, = \, P(v,w),
\]
for all $n \in \N_0$ and $v,w \in \mc A^+$, where we have used $\vartheta^0(\mc U) = \mc U$ as a convention. Inductively, we find $\mathbb P_{\mc U}[\vartheta^{n+k}(\mc U) = w \mid \vartheta^n(\mc U) = v ] = P^k(v,w)$, for all $n,k \in \N$. Since $P^k(v,w) = \mathbb P_v [\vartheta^k(v) = w]$, we obtain
\begin{align*}
\mathbb P_{\mc U} [\vartheta^{n+k}(\mc U) = w]
& \, = \, \sum_{v \in \mc A^+} \mathbb P_{\mc U}[\vartheta^{n+k}(\mc U) = w \mid \vartheta^n(\mc U) = v ] \, \mathbb P_{\mc U} [\vartheta^n(\mc U) = v]
\\ & \, = \, \sum_{v \in \mc A^+} \mathbb P_{\mc U} [\vartheta^n(\mc U) = v] \, \mathbb P_v [\vartheta^k(v) = w].
\end{align*}

%

Next, assume that we start from a deterministic word $\mc U \equiv u$, for some $u = u_1 \ldots u_m \in \mc A^+$. First, observe that $P(u,v) = \bigl( \bigotimes_{j=1}^m \mathbb P_{u_j} \bigr) [\vartheta(u_1) \cdots \vartheta(u_m) = v]$, for all $v \in \mc A^+$, compare Remark~\ref{Rem:trans-kernel}.
Consider the Markov chain $(\vartheta^n(u))_{n \in \N}$ on $(\Omega_u, \mc F_u, \mathbb P_u)$. For each $j \in \{1,\ldots,m\}$ we also have a Markov chain $(\vartheta^n(u_j))_{n \in \N}$. It is possible to realise each $\vartheta^n(u_j)$ as a function on $(\Omega_u, \mc F_u, \mathbb P_u)$ in such a way that $\vartheta^n(u) (\omega) = \vartheta^n(u_1)(\omega) \cdots \vartheta^n(u_m)(\omega)$ while preserving the defining relation $\mathbb P_u [\vartheta^{n+1}(u_j) = w \mid \vartheta^n(u_j) = v] = P(v,w)$ and independence of $\vartheta^n(u_j)$ and $\vartheta^n(u_k)$ for $j \neq k$. We refer to the Appendix for details. In particular,
\[
\mathbb P_u [\vartheta^n(u) = v] \, = \, \mathbb P_u [\vartheta^n(u_1) \cdots \vartheta^n(u_m) = v],
\]
for all $v \in \mc A^+$. This can also be verified using properties of the transition kernel $P$ only.

\begin{remark}
There is some redundancy in notations like $\mathbb P_{\mc U} [\vartheta^n(\mc U) = v]$, as the initial random word $\mc U$ is encoded both as an argument of $\vartheta^n$ and as a subscript of $\mathbb P$. In such situations, we will therefore often write $\mathbb P$ instead of $\mathbb P_{\mc U}$. The same holds for the expectation value $\mathbb E = \mathbb E_{\mc U}$.
\end{remark}

\begin{remark}
\label{Rem:multi-val}
Suppose $\vartheta$ is a random substitution on the alphabet $\mc A$. Let us define a set-valued map on $\mc A$ via $\hat{\vartheta}(a) := \supp (\mathbb P \circ \vartheta(a)^{-1}) \subset \mc A^+$. In words, $\hat{\vartheta}(a)$ consists of the set of possible realisations of $\vartheta(a)$. We can extend $\hat{\vartheta}$ to words via 
\[
\hat{\vartheta}(u_1 \cdots u_m) \, = \, \{ v^1 \cdots v^m \mid v^j \in \hat{\vartheta}(u_j) \; \mbox{for all} \; j \in \{1,\ldots,m\} \}
\]
and to sets of words as $\hat{\vartheta}(A) = \cup_{u \in A} \hat{\vartheta}(u)$. This is what is called a `random substitution' in \cite{grs}. It is a straightforward exercise to verify inductively that $\hat{\vartheta}^n(u) = \supp (\mathbb P \circ \vartheta^n(u)^{-1})$ for all $n \in \N$, so $v \in \hat{\vartheta}^n(u)$ if and only if $\mathbb{P} [\vartheta^n(u) = v] > 0$. We refer to $\hat{\vartheta}$ as a \emph{multi-valued substitution} in our present context.
\end{remark}

\begin{example}
Recall the random Fibonacci substitution $\vartheta$ from Example~\ref{Ex:r-fib-1}.
The distribution of\/ $\vartheta^2(b) = \vartheta(\vartheta(b)) = \vartheta(a)$ is immediate. The probability that\/ $\vartheta^2(a) = aba$ can be determined to be
\begin{align*}
\mathbb{P} [\vartheta^2(a) = aba]
& \, = \, \sum_{v \in \mc A^+} \mathbb P [\vartheta(a) = v ] \mathbb P[\vartheta(v) = aba]
\\& \, = \, \mathbb{P} [\vartheta(a) = ab] \, \mathbb{P} [\vartheta(a)\vartheta(b) = aba] + \mathbb{P} [\vartheta(a) = ba] \, \mathbb{P} [\vartheta(b)\vartheta(a) = aba]
\\&\, =\, p_1^2 + p_2^2.
\end{align*}
Note that there are two different realisations $(ab, aba)$ and $(ba, aba)$ of $(\vartheta(a),\vartheta^2(a))$, contributing to the event $[\vartheta^2(a) = aba]$. 
The full distribution of\/ $\vartheta^2(a)$ is given by
\[
\vartheta^2(a) \, = \, \begin{cases}
aab, \quad \mbox{with probability } p_2 p_1,
\\aba, \quad \mbox{with probability } p_1^2 + p_2^2,
\\baa, \quad \mbox{with probability } p_1 p_2 .
\end{cases}
\]
\end{example} 

\begin{example}
Consider the random substitution
\[
\vartheta \colon a \mapsto \begin{cases}
b, \quad \mbox{with probability } p_1,
\\ba, \quad \mbox{with probability } p_2,
\end{cases}
\quad
b \mapsto \begin{cases}
b, \quad \mbox{with probability } q_1,
\\ab, \quad \mbox{with probability } q_2.
\end{cases}
\]
Then, the event\/ $[\vartheta(ab) = bab]$ can be realised either via\/ $\vartheta(a) = ba$, $\vartheta(b) = b$ or via\/ $\vartheta(a) = b$, $\vartheta(b) = ab$. For the transition kernel\/ $P$, we therefore find\/ 
\begin{align*}
P(ab,bab) & \, = \, P^{\vartheta}_{a}(ba)P^{\vartheta}_b(b) + P^{\vartheta}_a(b) P^{\vartheta}_b(ab) 
\\ & \, = \, \mathbb P[\vartheta(a) = ba] \, \mathbb P[\vartheta(b) = b] + 
\mathbb P [\vartheta(a) = b] \, \mathbb P[\vartheta(b) = ab]
 \, = \, p_2 q_1 + p_1 q_2.
\end{align*}
This is one of the cases where the pair $(\vartheta(a),\vartheta(b))$ contains more information than their concatenation $\vartheta(a) \vartheta(b) = \vartheta(ab)$.
\end{example}

In the same spirit as for deterministic substitutions, we can define a substitution matrix which captures some information about the Abelianisation of the random substitution process.

\begin{definition}
\label{Def:mean-matrix}
The \emph{mean substitution matrix}\/ $M_{\vartheta}$ of a random substitution\/ $\vartheta$ is defined via\/ $(M_{\vartheta})_{ab} = \mathbb{E}[\abs{\vartheta(b)}_{a}] = \mathbb{E}[ \Phi(\vartheta(b))_a]$ for all\/ $a,b \in \mc A$. A random substitution\/ $\vartheta$ is called \emph{primitive} if\/ $M_{\vartheta}$ is a primitive matrix. In this case, it is called \emph{expanding} if 
\/ $\mathbb{P} [\abs{\vartheta(a)} > 1]>0$ for some\/ $a \in \mathcal{A}$. For a primitive random substitution, we denote by\/ $\lambda$ the Perron--Frobenius (PF) eigenvalue of\/ $M_{\vartheta}$ and by\/ $R$ and\/ $L$ the corresponding right and left eigenvectors, respectively. The normalisation of these vectors is chosen such that\/ $\norm{R}_1 = 1$ and\/ $L \cdot R = 1$.
\end{definition}

\begin{remark}
Unlike in the deterministic setting, a primitive random substitution need not be expanding. An easy counterexample is given by
\[
\vartheta \colon\ a,b \mapsto \begin{cases}
a, \quad \mbox{with probability } p_1,
\\ b, \quad \mbox{with probability } p_2 ,
\end{cases}
\quad\ \ M_{\vartheta} \, = \, \begin{pmatrix}
p_1 & p_1
\\ p_2 & p_2
\end{pmatrix}.
\]
\end{remark}

We can construct from a random substitution the set of legal words $\mathcal{L}_{\vartheta}$ and the symbolic hull $\mathbb{X}_{\vartheta}$, much as in the deterministic case, compare \cite{moll}, \cite{rs}.
\begin{definition}
The \emph{language} of a random substitution\/ $\vartheta$ on\/ $\mathcal{A}$ is defined as 
\[
\mathcal{L}_{\vartheta} \, = \, \{ u \in \mathcal{A}^{+} \Mid u \triangleleft v, \, \mathbb{P}[\vartheta^n(a) = v]>0, \, \text{for some} \; n \in \mathbb{N}, a \in \mathcal{A} \}.
\] 
The \emph{set of ($\vartheta$-)legal words of length\/ $\ell$} is given by\/ $\mathcal{L}^{\ell}_{\vartheta} = \{ u \in \mathcal{L}_{\vartheta} \Mid \abs{u} = \ell \}$.
\end{definition}

The definition of $\mathcal{L}_{\vartheta}$ is independent of the probability vectors  (as long as none of their entries is $0$), making it a purely combinatorial object. In fact, it can already be constructed from the multi-valued substitution $\hat{\vartheta}$ pertaining to $\vartheta$, compare Remark~\ref{Rem:multi-val}. The same holds for the subshift that we can construct from this language. 
\begin{definition}
For a random substitution $\vartheta$ on $\mathcal{A}$, we define the \emph{random substitution subshift} (RS-subshift) or \emph{stochastic hull} as
\[
\mathbb{X}_{\vartheta} \, = \, \{ x \in \mathcal{A}^{\mathbb{Z}} \Mid x_{[k,m]} \in \mathcal{L}_{\vartheta}  \text{ for all }  k \leqslant m \}.
\]
\end{definition}
A probabilistic structure on $\mathbb X_{\vartheta}$ will be obtained by a convenient choice of a measure on this subshift in Section~\ref{sec:main-result}.

\section{word frequencies, convergence in expectation}
\label{sec:word_freq_exp}
Many of the results in this section essentially go back to \cite{koslicki}. However, we do not assume the existence of a letter $b \in \mathcal{A}$ and a realisation $\omega\in \Omega_b$ such that $\vartheta(b)(\omega)$ starts with $b$, which is taken as a standing assumption in \cite{koslicki}. Also note that the terminology in \cite{koslicki} is slightly different in so far as  properties that hold under taking expectation are often called `almost sure'.

An important tool for determining letter frequencies of a deterministic substitution is its corresponding induced substitution \cite[Ch.~5.4.1]{queffelec}. Before we continue, we need to extend this notion to the case of expanding random substitutions. For an arbitrary $\ell \in \mathbb{N}$, let $\mathcal{A}_{\ell} := \mathcal{L}^{\ell}_{\vartheta}$ be the alphabet consisting of letters that are $\vartheta$-legal words of length $\ell$. We define a subset of consistent right collared words as
\[ 
\mc D_{\ell} \, = \, \{ V \in \mc A_{\ell}^+ \Mid V = (v_{[k,k+\ell-1]})_{1 \leqslant k \leqslant m} \; \mbox{for some} \, m \in \N, v \in \mc A^+ \}.
\]
For $v \in \mc A^+$, let us assume that $\vartheta^n(v_j)$ is a random variable on $(\Omega_v, \mc F_v, \mathbb P_v)$ for all $1 \leqslant j \leqslant |v|$, compare the Appendix. In that case, we set $\vartheta^n(v_k \cdots v_m)(\omega) = \vartheta^n(v_k)(\omega) \cdots \vartheta^n(v_m)(\omega)$ for $1 \leqslant k \leqslant m \leqslant |v|$ and $\omega \in \Omega_v$.

\begin{definition}
Let $\vartheta$ be an expanding random substitution, $\ell \in \N$ and $V = V_1 \cdots V_m = (v_{[k,k+\ell-1]})_{1 \leqslant k \leqslant m} \in \mc D_{\ell}$ with $v = v_1 \cdots v_{m+\ell -1}$. For $n \in \N$, the \emph{($\ell-$)induced substitution} of $\vartheta^n(v)$ is a random word $\vartheta^n_{\ell}(V)$ on the probability space $(\Omega_v, \mc F_v, \mathbb P_v)$, with values in $\mc D_{\ell}$. It is defined as
\[
\vartheta_{\ell}^n(V)(\omega) \, = \, \left( \vartheta^n(v)(\omega)_{[k,k+\ell-1]}  \right)_{1 \leqslant k \leqslant \abs{\vartheta^n(v^{}_1 \cdots v^{}_m)(\omega)}}.
\]
\end{definition}

The pairs $(\vartheta^n(v),\vartheta^n(v_1 \cdots v_m))_{n \in \N}$ form a stationary Markov chain by definition. Since each $\vartheta^n_{\ell}(V)$ is a function of $(\vartheta^n(v),\vartheta^n(v_1 \cdots v_m))$, the same holds for the sequence $(\vartheta^n_{\ell}(V))_{n \in \N}$. 
We have the identity $\vartheta^n_{\ell}(V) = \vartheta_{\ell}^n(V_1) \cdots \vartheta^n_{\ell}(V_m)$, with
\[
\vartheta_{\ell}^n(V_j)(\omega) \, = \, \left( \vartheta^n(V_j)(\omega)_{[k,k+\ell-1]}  \right)_{1 \leqslant k \leqslant \abs{\vartheta^n(v^{}_j)(\omega)}}
\]
for all $n \in \N$, $j \in \{1,\ldots,m\}$ and $\omega \in \Omega_u$.

\begin{remark}
It is possible to define an induced substitution $\vartheta_{\ell}$ also as a map between $\mc D_{\ell}$-valued random words, similar to Definition~\ref{Def:random-substitution}. However, the transition kernel $P_{\ell}$ corresponding to $\vartheta_{\ell}$ does not have the same product structure as $P$. This corresponds to the fact that for $V= V_1 \cdots V_m$, the random words $\vartheta_{\ell}(V_1),\ldots, \vartheta_{\ell}(V_m)$ are not independent. That is because e.g.\ the words $V_1$ and $V_2$ have some overlap which, by force, is mapped to the same word under $\vartheta$ for any realisation. The Markov chain $(\vartheta^n_{\ell}(V))_{n \in \N}$ forms an M-system in the sense of Peyri\`{e}re \cite{peyriere}.
\end{remark}

We again define an Abelianisation map $\Phi_{\ell} \colon \mc D_{\ell} \to \N_0^{\card \mc A_{\ell}}$ via $\Phi_{\ell}(V)_u = |V|_u$ for all $V \in \mc D_{\ell}$ and $u \in \mc A_{\ell}$. The mean substitution matrix $M_{\vartheta_{\ell}}$ is given by $(M_{\vartheta_{\ell}})_{uv} = \mathbb E [|\vartheta_{\ell}(v)|_u]= \mathbb E \Phi_{\ell}(\vartheta_{\ell}(v))_u $ for all $u,v \in \mc A_{\ell}$. Denote by $L^{(\ell)}$ and $R^{(\ell)}$ the corresponding left and right eigenvectors, normalized as $L^{(\ell)} \cdot R^{(\ell)} =1$ and $\norm{R^{(\ell)}}_1 = 1$. Note that for $\ell =1$, we have $\Phi_{\ell} = \Phi$ and $M_{\vartheta_{\ell}} = M_{\vartheta}$, compare Definition~\ref{Def:mean-matrix}.

\begin{prop}
\label{1_prop_diagram}
Let\/ $\vartheta$ be a random substitution,\/ $\ell \in \N$ and\/ $V = V_1 \cdots V_m = (v_{[k,k+\ell-1]})_{1 \leqslant k \leqslant m}$ with\/ $v = v_1 \cdots v_{m+\ell -1} \in \mc A^+$. If\/ $\ell > 1$, suppose further that\/ $\vartheta$ is expanding. Then, for all\/ $n \in \N$, we have the following conditional expectation,
\[
\mathbb E[\Phi_{\ell}(\vartheta^n_{\ell}(V)) \, | \, \Phi_{\ell}(\vartheta^{n-1}_{\ell}(V))] \, = \, M_{\vartheta_{\ell}} \Phi_{\ell}(\vartheta^{n-1}_{\ell}(V)).
\]
In particular,\/ $\mathbb E[\Phi_{\ell}(\vartheta^n_{\ell}(V)) ]= M_{\vartheta_{\ell}} \mathbb E[\Phi_{\ell}(\vartheta^{n-1}_{\ell}(V))] = M^n_{\vartheta_{\ell}} \Phi_{\ell}(V)$.
\end{prop}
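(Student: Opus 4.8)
The plan is to condition on the one-step history of the Markov chain $(\vartheta^n_{\ell}(V))_{n \in \N}$ and exploit the structure $\vartheta^n_{\ell}(V) = \vartheta^n_{\ell}(V_1) \cdots \vartheta^n_{\ell}(V_m)$ together with linearity of $\Phi_{\ell}$. Since $\Phi_{\ell}$ counts letter occurrences in $\mc A_{\ell}$ and the concatenation $\vartheta^n_{\ell}(V) = \vartheta^n_{\ell}(V_1) \cdots \vartheta^n_{\ell}(V_m)$ is a genuine (overlap-free at the level of $\mc A_{\ell}$-letters) concatenation, we have $\Phi_{\ell}(\vartheta^n_{\ell}(V)) = \sum_{j=1}^m \Phi_{\ell}(\vartheta^n_{\ell}(V_j))$. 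So it suffices to understand, for a single collared letter $W \in \mc A_{\ell}$, the conditional expectation $\mathbb E[\Phi_{\ell}(\vartheta^n_{\ell}(W)) \mid \Phi_{\ell}(\vartheta^{n-1}_{\ell}(V))]$.

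First I would reduce from conditioning on $\Phi_{\ell}(\vartheta^{n-1}_{\ell}(V))$ to conditioning on the full word $\vartheta^{n-1}_{\ell}(V)$, and then to conditioning on $\vartheta^{n-1}(v)$ (the uncollared $(n-1)$-st level word from which $\vartheta^{n-1}_{\ell}(V)$ is read off), since $\vartheta^{n-1}_{\ell}(V)$ is a deterministic function of $\vartheta^{n-1}(v)$. Given $\vartheta^{n-1}(v) = w$ for some fixed realisation, the next word $\vartheta^n(v) = \vartheta(w)$ is obtained by substituting each letter of $w$ independently. The key point is that the number of occurrences in $\vartheta^n_{\ell}(W)$ of a given collared letter $U \in \mc A_{\ell}$ depends, by the Markov property and the product structure of $P$, only on $W$ itself (and not on where $W$ sits in $\vartheta^{n-1}_{\ell}(V)$), so that $\mathbb E[\Phi_{\ell}(\vartheta^n_{\ell}(W)) \mid \vartheta^{n-1}(v) = w] = \mathbb E[\Phi_{\ell}(\vartheta_{\ell}(W))]$, which is precisely the column $(M_{\vartheta_{\ell}})_{\cdot\, W}$ by definition of the mean matrix. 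Summing over $j$ and using $\sum_j \Phi_{\ell}(\vartheta^{n-1}_{\ell}(V_j)) = \Phi_{\ell}(\vartheta^{n-1}_{\ell}(V))$ gives $\mathbb E[\Phi_{\ell}(\vartheta^n_{\ell}(V)) \mid \vartheta^{n-1}(v)] = M_{\vartheta_{\ell}} \Phi_{\ell}(\vartheta^{n-1}_{\ell}(V))$, and a final tower-property argument (conditioning on the coarser $\sigma$-algebra generated by $\Phi_{\ell}(\vartheta^{n-1}_{\ell}(V))$) yields the stated identity. The second assertion follows by taking expectations and iterating, with the base case $n=1$ being $\mathbb E[\Phi_{\ell}(\vartheta_{\ell}(V))] = M_{\vartheta_{\ell}} \Phi_{\ell}(V)$, again immediate from $\Phi_{\ell}(V) = \sum_j \Phi_{\ell}(V_j) = \sum_j e_{V_j}$ and linearity.

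The main obstacle is the bookkeeping at the \emph{boundaries} between the blocks $\vartheta^n_{\ell}(V_j)$: a priori, when one forms the collared word of the concatenation $\vartheta^n(v_1)\cdots\vartheta^n(v_{m+\ell-1})$, the windows of length $\ell$ straddling the junction between $\vartheta^n(v_j)$ and $\vartheta^n(v_{j+1})$ do not belong to any single $\vartheta^n_{\ell}(V_j)$ as defined. One must check that the definition of $\vartheta^n_{\ell}(V_j)(\omega) = (\vartheta^n(V_j)(\omega)_{[k,k+\ell-1]})_{1 \le k \le |\vartheta^n(v_j)(\omega)|}$ — which uses the \emph{collared} letter $V_j$, i.e.\ a window of length $\ell$, as input, so that $\vartheta^n(V_j)$ already has the $\ell-1$ extra symbols needed to complete the straddling windows — makes $\vartheta^n_{\ell}(V) = \vartheta^n_{\ell}(V_1)\cdots\vartheta^n_{\ell}(V_m)$ hold exactly, with the junction windows correctly absorbed and no double counting; this is the content of the identity asserted just before the Proposition, and I would either cite it or verify it carefully, as the whole argument rests on it (together with the fact that overlapping inputs $V_j, V_{j+1}$ are mapped consistently under a common realisation, so the concatenation is well-defined pathwise). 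Once that combinatorial identity is in hand, everything else is the routine Markov-chain/linearity computation sketched above.
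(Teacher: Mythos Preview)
Your proposal is correct and follows essentially the same approach as the paper. The paper's proof is terser: it invokes stationarity of the Markov chain $(\vartheta^n_{\ell}(V))_{n\in\N}$ to reduce immediately to the deterministic base case $\mathbb E[\Phi_{\ell}(\vartheta_{\ell}(V))] = M_{\vartheta_{\ell}}\Phi_{\ell}(V)$, and then uses exactly the pointwise additivity $|\vartheta_{\ell}(V)|_u = \sum_i |\vartheta_{\ell}(V_i)|_u$ and the definition of $M_{\vartheta_{\ell}}$ that you describe; your conditioning on $\vartheta^{n-1}(v)$ plus tower property is the same reduction spelled out more explicitly, and the boundary bookkeeping you flag is precisely the identity $\vartheta^n_{\ell}(V) = \vartheta^n_{\ell}(V_1)\cdots\vartheta^n_{\ell}(V_m)$ the paper states (without proof) just before the proposition.
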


\begin{proof}
Because $(\vartheta^n_{\ell}(V))_{n \in \N}$ is a stationary process, it suffices to show 
\[
\mathbb E[\Phi_{\ell}(\vartheta_{\ell}(V)) ] 
\, = \, \mathbb E[\Phi_{\ell}(\vartheta_{\ell}(V)) | \Phi_{\ell}(V) ] \, = \, M_{\vartheta_{\ell}} \Phi_{\ell}(V). 
\]
For $u \in \mc A_{\ell}$, we can conclude from the pointwise identity 
$
\abs{\vartheta_{\ell}(V)(\omega)}_u \, = \, \sum_{i=1}^m \abs{\vartheta_{\ell}(V_i)(\omega)}_u
$
the corresponding relation under taking expectations,
\begin{equation*}
\mathbb{E} \abs{\vartheta^{}_{\ell}(V)}_u \, = \, \sum_{i=1}^m \mathbb{E} \abs{\vartheta^{}_{\ell}(V_i)}_u \, = \, \sum_{v \in \mc A_{\ell}} (M_{\vartheta_{\ell}})_{uv} \Phi_{\ell}(V)_v 
\, = \, (M_{\vartheta_{\ell}} \Phi_{\ell}(V))_u.
\end{equation*}
Since $\mathbb E \Phi_{\ell}(\vartheta_{\ell}(V))_u = \mathbb E |\vartheta_{\ell}(V)|_u$, this proves the claim.
\end{proof}

For $\ell = 1$, the induced substitution $\vartheta_1$ coincides with the original substitution $\vartheta$. This yields the following alternative characterisation of primitive substitutions, compare \cite[Rem. 6]{rs} and \cite[Rem. 2.13]{moll}.

\begin{coro} \label{1-coro-prim-subst}
Let\/ $\vartheta$ be a random substitution rule on\/ $\mathcal{A}$. Then,\/ $\vartheta$ is primitive if and only if there exists a number\/ $k \in \mathbb{N}$ such that, for all\/ $a,b \in \mathcal{A}$, we have\/ $\mathbb{P} [a \triangleleft \vartheta^k(b)]>0$.
\end{coro}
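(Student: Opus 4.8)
The plan is to connect the combinatorial positivity condition to the positivity of entries of powers of the mean substitution matrix $M_{\vartheta}$, exploiting that $(M_{\vartheta})_{ab} = \mathbb{E}[|\vartheta(b)|_a]$ is a sum of products of probabilities, all of which are strictly positive on the support. Throughout I will use the identification from Remark~\ref{Rem:multi-val} that $v \in \hat{\vartheta}^k(b)$ if and only if $\mathbb{P}[\vartheta^k(b) = v] > 0$, so that $\mathbb{P}[a \triangleleft \vartheta^k(b)] > 0$ is equivalent to the existence of a realisation $v \in \hat\vartheta^k(b)$ with $a \triangleleft v$.

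First I would establish, for a single power, the key equivalence: for $a,b \in \mc A$ and $k \in \N$, one has $(M_{\vartheta}^k)_{ab} > 0$ if and only if $\mathbb{P}[a \triangleleft \vartheta^k(b)] > 0$. For $k=1$ this is immediate from the definition of $M_{\vartheta}$, since $\mathbb{E}[|\vartheta(b)|_a] = \sum_{v} P^{\vartheta}_b(v)\,|v|_a$ is a nonnegative combination that is strictly positive precisely when some realisation $v$ of $\vartheta(b)$ contains $a$. For general $k$ I would argue by induction using Proposition~\ref{1_prop_diagram} with $\ell = 1$, which gives $\mathbb{E}[\Phi(\vartheta^k(b))] = M_{\vartheta}^k \Phi(b)$, i.e.\ $(M_{\vartheta}^k)_{ab} = \mathbb{E}[|\vartheta^k(b)|_a]$; hence $(M_{\vartheta}^k)_{ab} > 0$ iff $|\vartheta^k(b)(\omega)|_a > 0$ with positive probability, i.e.\ iff $\mathbb{P}[a \triangleleft \vartheta^k(b)] > 0$. (One can also derive this directly from the nonnegativity of all entries and the Chapman--Kolmogorov-type expansion $(M_{\vartheta}^{k+1})_{ab} = \sum_c (M_{\vartheta}^k)_{ac}(M_{\vartheta})_{cb}$ together with the fact that a realisation of $\vartheta^{k+1}(b)$ containing $a$ decomposes through a realisation of $\vartheta(b)$ and then iterated substitutions of its letters.)

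Granting this equivalence, the corollary follows from the standard Perron--Frobenius characterisation of primitivity for nonnegative matrices: $M_{\vartheta}$ is primitive iff there is a single $k \in \N$ with $(M_{\vartheta}^k)_{ab} > 0$ for all $a,b \in \mc A$ simultaneously. Translating each entry condition via the equivalence of the previous paragraph yields exactly the statement that there exists $k \in \N$ with $\mathbb{P}[a \triangleleft \vartheta^k(b)] > 0$ for all $a,b$. For the converse direction, the only subtlety is passing from ``for each pair $a,b$ there is some $k_{a,b}$'' (which is what irreducibility-plus-aperiodicity-type arguments give pair by pair) to a uniform $k$: here I would use that if $(M_{\vartheta}^{k})_{ab}>0$ then $(M_{\vartheta}^{k+1})_{ab}>0$ as well, which holds because $M_{\vartheta}$ has no zero column (every letter has at least one realisation, so $\sum_a (M_\vartheta)_{ab} = \mathbb{E}|\vartheta(b)| \geq 1 > 0$), allowing one to push any fixed pair's positive power forward and then take $k = \max_{a,b} k_{a,b}$.

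The main obstacle is really just bookkeeping: making precise the claim that a realisation of $\vartheta^{k+1}(b)$ containing the letter $a$ ``factors through'' the Markov-chain structure, i.e.\ that positivity of $\mathbb{P}[a \triangleleft \vartheta^{k+1}(b)]$ is equivalent to the existence of $c \in \mc A$ with $c \triangleleft \vartheta(b)$ (positive probability) and $a \triangleleft \vartheta^{k}(c)$ (positive probability). This is where one must invoke the independence structure of $\vartheta^n$ acting letterwise (the realisation of $\vartheta^{k+1}(b) = \vartheta^k(\vartheta(b))$ is obtained by independently substituting the letters of a realisation of $\vartheta(b)$), as set up in Section~\ref{sec:rand_subst} and the Appendix; once that is in hand the inductive step and the matrix identity match up cleanly.
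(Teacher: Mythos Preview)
Your proposal is correct and follows essentially the same route as the paper: use Proposition~\ref{1_prop_diagram} with $\ell=1$ to get $(M_{\vartheta}^k)_{ab} = \mathbb{E}[|\vartheta^k(b)|_a]$, then observe that a nonnegative random variable has positive expectation iff it is positive with positive probability. The additional discussion you include (the ``subtlety'' of passing to a uniform $k$, and the ``main obstacle'' of factoring realisations through the Markov structure) is unnecessary here, since the definition of primitivity already demands a single $k$ and Proposition~\ref{1_prop_diagram} delivers the identity for all powers at once without any inductive decomposition.
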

\begin{proof}
From Proposition~\ref{1_prop_diagram}, we know $\mathbb{E}\Phi(\vartheta^k(b))_a =  (M^k_{\vartheta}\Phi(b))_a = (M^k_{\vartheta})_{ab}$. Therefore,
\[
(M^k_{\vartheta})_{ab} \, > \, 0 \, \iff \, \mathbb{E}\Phi(\vartheta^k(b))_a \, > \, 0 \, \iff \, 
\mathbb{P} [\abs{\vartheta^k(b)}_a > 0] \, > \, 0 .
\]
\end{proof}

It was shown in \cite[Prop. 26]{rs} that $M_{\vartheta^{}_{\ell}}$ is primitive if $M_{\vartheta}$ is primitive and in \cite[Lem. 2.4.9]{koslicki} that the PF eigenvalues of both matrices coincide.
The following result is inspired by \cite[Lem. 2.4.5, Lem. 2.4.6, Cor. 2.4.8]{koslicki}. Note that our results differs from the one given in the proof of \cite[Lem. 2.4.5]{koslicki} by a factor of $L_u^{(\ell)}$ as we fixed the norm of the right eigenvector to be $1$. 

\begin{lemma} \label{1-lemma-exp-convergence}
Let\/ $\vartheta$ be an expanding primitive random substitution on the finite alphabet\/ $\mathcal{A}$. For any\/ $\ell \in \mathbb{N}$, let\/ $\vartheta_{\ell}$ be the induced substitution with mean substitution matrix\/ $M_{\vartheta_{\ell}}$, PF eigenvalue\/ $\lambda$ and right and left PF-eigenvector\/ $R^{(\ell)}$ and\/ $L^{(\ell)}$, normalised as above. Then, we have the following convergence results: 
\begin{align}
& \lim_{n \rightarrow \infty} \frac{\mathbb{E}\Phi_{\ell}(\vartheta_{\ell}^n(u))}{\lambda^n} \, = \, L^{(\ell)}_u R^{(\ell)},  \label{eq_expec_conv}
\\[5pt] & \lim_{n \rightarrow \infty} \frac{\mathbb{E}\abs{\vartheta^{n+1}_{\ell}(u)}}{\mathbb{E}\abs{\vartheta^n_{\ell}(u)}} \, = \, \lambda, 
\\[5pt] & \lim_{n \rightarrow \infty} \frac{\mathbb{E}\Phi_{\ell}(\vartheta^n_{\ell}(u))}{\mathbb{E}\abs{\vartheta^n_{\ell}(u)}} \, = \, R^{(\ell)},
\end{align}
for all\/ $u \in \mathcal{A}_{\ell}$. The same statement is still true for non-expanding\/ $\vartheta$ if\/ $\ell =1$.
\end{lemma}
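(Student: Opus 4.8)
The plan is to derive all three limits from the single fact that $M_{\vartheta_\ell}$ is a primitive nonnegative matrix with PF eigenvalue $\lambda$, together with Proposition~\ref{1_prop_diagram}, which already gives the exact formula $\mathbb{E}\Phi_\ell(\vartheta_\ell^n(u)) = M_{\vartheta_\ell}^n \Phi_\ell(u)$. The key input is the Perron--Frobenius theorem in the following quantitative form: for a primitive matrix $M$ with PF eigenvalue $\lambda$ and left/right eigenvectors $L, R$ normalised by $L\cdot R = 1$, one has $\lambda^{-n} M^n \to R\, L^{\mathsf T}$ entrywise as $n\to\infty$. Since $\vartheta$ is expanding (or $\ell=1$), $\vartheta_\ell$ is primitive by \cite[Prop.~26]{rs}, and its PF eigenvalue is $\lambda$ by \cite[Lem.~2.4.9]{koslicki}, so this theorem applies.

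First I would prove \eqref{eq_expec_conv}. For $u \in \mathcal{A}_\ell$, we have $\Phi_\ell(u) = e_u$, the standard basis vector, since $u$ is a single letter of the alphabet $\mathcal{A}_\ell$ viewed as a word of length one. Hence by Proposition~\ref{1_prop_diagram},
\[
\frac{\mathbb{E}\Phi_\ell(\vartheta_\ell^n(u))}{\lambda^n} \, = \, \frac{M_{\vartheta_\ell}^n e_u}{\lambda^n} \, \xrightarrow[n\to\infty]{} \, (R^{(\ell)} (L^{(\ell)})^{\mathsf T}) e_u \, = \, L^{(\ell)}_u R^{(\ell)},
\]
using the quantitative PF convergence above; this is exactly \eqref{eq_expec_conv}. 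Next, $\mathbb{E}\abs{\vartheta_\ell^n(u)}$ is the $\ell^1$-norm (sum of entries) of $\mathbb{E}\Phi_\ell(\vartheta_\ell^n(u))$, so dividing by $\lambda^n$ and taking the limit gives $\lim_n \lambda^{-n}\mathbb{E}\abs{\vartheta_\ell^n(u)} = L^{(\ell)}_u \norm{R^{(\ell)}}_1 = L^{(\ell)}_u$, which is strictly positive since $L^{(\ell)}$ has strictly positive entries (primitivity) and $\norm{R^{(\ell)}}_1 = 1$. The second limit then follows by writing the ratio $\mathbb{E}\abs{\vartheta_\ell^{n+1}(u)}/\mathbb{E}\abs{\vartheta_\ell^n(u)}$ as $(\lambda^{-(n+1)}\mathbb{E}\abs{\vartheta_\ell^{n+1}(u)})\big/(\lambda^{-n}\mathbb{E}\abs{\vartheta_\ell^n(u)})\cdot\lambda$ and noting numerator and denominator both tend to $L^{(\ell)}_u \neq 0$. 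For the third limit, divide \eqref{eq_expec_conv} by $\lambda^{-n}\mathbb{E}\abs{\vartheta_\ell^n(u)} \to L^{(\ell)}_u$ to get $\mathbb{E}\Phi_\ell(\vartheta_\ell^n(u))/\mathbb{E}\abs{\vartheta_\ell^n(u)} \to L^{(\ell)}_u R^{(\ell)}/L^{(\ell)}_u = R^{(\ell)}$.

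The only genuine subtlety — and the step I would be most careful about — is the starting point $\Phi_\ell(u) = e_u$: one must confirm that the induced substitution applied to a length-one word in the collared alphabet $\mathcal{A}_\ell$ is consistent with the Markov chain $(\vartheta_\ell^n(V))_n$ defined on general $V \in \mathcal{D}_\ell$, i.e.\ that a single legal word $u$ of length $\ell$ is a legitimate element of $\mathcal{D}_\ell$ (take $m=1$, $v = u$ in the definition of $\mathcal{D}_\ell$). Given that, the rest is purely a matter of quoting the Perron--Frobenius limit theorem and doing the elementary bookkeeping above; the expanding hypothesis enters only through primitivity of $M_{\vartheta_\ell}$, which is precisely why it can be dropped when $\ell=1$ (there $M_{\vartheta_\ell} = M_{\vartheta}$ is primitive by assumption regardless of expansion). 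No integrability or interchange-of-limit issues arise because Proposition~\ref{1_prop_diagram} already reduced everything to a finite-dimensional linear recursion.
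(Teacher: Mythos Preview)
Your proof is correct and follows essentially the same route as the paper: use Proposition~\ref{1_prop_diagram} to reduce \eqref{eq_expec_conv} to the Perron--Frobenius limit $\lambda^{-n} M_{\vartheta_\ell}^n \Phi_\ell(u) \to R^{(\ell)} L^{(\ell)}_u$, and then derive the other two limits from the observation that $\mathbb{E}|\vartheta_\ell^n(u)| = \|\mathbb{E}\Phi_\ell(\vartheta_\ell^n(u))\|_1$ together with $L^{(\ell)}_u \neq 0$. Your remarks on why the expanding hypothesis can be dropped for $\ell=1$ and on $\Phi_\ell(u) = e_u$ are accurate and match the paper's (implicit) reasoning.
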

\begin{proof}
Note that $\norm{\Phi_{\ell}(\vartheta^n_{\ell}(u))}_1 = \abs{\vartheta^n_{\ell}(u)}$ and the expectation value is a linear operator, so the second and the third relations follow immediately from the first, observing the identity $\frac{1}{\lambda^n} \mathbb{E} \abs{\vartheta^n_{\ell}(u)} \xrightarrow{n \to \infty } L^{(\ell)}_u \neq 0$. It thus remains to show Eq.~\eqref{eq_expec_conv}. 

Using Proposition~\ref{1_prop_diagram}, we find $\mathbb{E}\Phi_{\ell}(\vartheta_{\ell}^n(u)) = M_{\vartheta^{}_{\ell}}^n \Phi_{\ell}(u)$ and thus, by an application of the PF theorem, 
\[
\frac{\mathbb{E}\Phi_{\ell}(\vartheta_{\ell}^n(u))}{\lambda^n}
\, = \, \frac{1}{\lambda^n} M_{\vartheta_{\ell}}^n \Phi_{\ell}(u) 
\, \xrightarrow{\, n \to \infty \,} \, R^{(\ell)} (L^{(\ell)} \cdot e_u) 
\, = \, R^{(\ell)} L^{(\ell)}_u.
\]
\end{proof}

In the next result, we establish an exponentially decaying bound for the probability that the length of $\vartheta^n(a)$ grows sublinearly. This will be a convenient tool in later constructions.
\begin{prop} \label{length-as-infty}
If\/ $\vartheta$ is an expanding primitive substitution, there are constants\/ $C,K > 0$ such that, for all\/ $a \in \mc A$ and large enough\/ $n$,
\[
\mathbb{P} \left[ | \vartheta^n(a) | < K n \right]
\, \leqslant \, \me^{- C n}.
\]
In particular,
\[
\lim_{n \rightarrow \infty} \abs{\vartheta^n(a)} \, = \, \infty,
\]
almost surely for all\/ $a \in \mathcal{A}$.
\end{prop}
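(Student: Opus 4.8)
The plan is to control the length $|\vartheta^n(a)|$ from below by a supermartingale-type / large-deviation argument applied to the random vector $\Phi(\vartheta^n(a))$. By primitivity and Corollary~\ref{1-coro-prim-subst}, there is a $k_0$ with $(M_\vartheta^{k_0})_{bc}>0$ for all $b,c$; since $\vartheta$ is expanding, after possibly enlarging $k_0$ we can also ensure that for every letter $b$ there is a realisation of $\vartheta^{k_0}(b)$ of length at least $2$ containing every letter. The key quantity to track is $\langle L, \Phi(\vartheta^n(a))\rangle$, where $L$ is the left PF-eigenvector of $M_\vartheta$: by Proposition~\ref{1_prop_diagram}, $\mathbb E[\langle L,\Phi(\vartheta^{n}(a))\rangle \mid \mathcal F_{n-1}] = \langle L, M_\vartheta \Phi(\vartheta^{n-1}(a))\rangle = \lambda \langle L,\Phi(\vartheta^{n-1}(a))\rangle$, so $W_n := \lambda^{-n}\langle L,\Phi(\vartheta^n(a))\rangle$ is a nonnegative martingale with $\mathbb E W_n = \langle L, e_a\rangle = L_a > 0$.

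The main step is a quantitative lower bound showing that $|\vartheta^n(a)|$ cannot stay small along a long stretch of the Markov chain. I would work with blocks of length $k_0$. Conditioned on $\vartheta^{jk_0}(a) = w$ with $|w| = \ell$, the word $\vartheta^{(j+1)k_0}(a) = \vartheta^{k_0}(w_1)\cdots\vartheta^{k_0}(w_\ell)$ is an independent concatenation (Remark~\ref{Rem:trans-kernel}), and each factor independently has length $\geqslant 2$ with probability at least some fixed $p_0 > 0$ (using expandingness plus the fact that by primitivity every letter appears, so some letter whose $\vartheta^{k_0}$-image can be long is reachable — more simply, by enlarging $k_0$ one arranges $\mathbb P[|\vartheta^{k_0}(b)|\geqslant 2] \geqslant p_0$ for every $b$). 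Hence, conditioned on $|\vartheta^{jk_0}(a)| = \ell$, the length $|\vartheta^{(j+1)k_0}(a)|$ stochastically dominates $\ell + \mathrm{Bin}(\ell, p_0)$; equivalently, $|\vartheta^{(j+1)k_0}(a)|$ dominates $\mathrm{Bin}(\ell, 1+p_0)$-in-mean with strong concentration. Iterating: as long as the current length is at least, say, $2$, the length multiplies by a factor $\geqslant 1+p_0/2$ with overwhelming probability at each block, by Hoeffding/Chernoff for the binomial. Summing these block failure probabilities geometrically yields that the event "$|\vartheta^{mk_0}(a)|$ fails to reach $(1+p_0/2)^m$" has probability at most $m\,\me^{-c m}$ for some $c>0$, and absorbing the polynomial factor gives a clean exponential bound $\me^{-c'm}$ for large $m$. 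Translating back via $n = mk_0$ (and handling the at-most-$k_0$ leftover steps trivially, since lengths are non-decreasing in $n$ as $\vartheta$ is expanding — lengths never shrink because every image has length $\geqslant 1$, and in fact $|\vartheta^{n+1}(a)| \geqslant |\vartheta^n(a)|$) produces constants $K, C > 0$ with $\mathbb P[|\vartheta^n(a)| < Kn] \leqslant \me^{-Cn}$; note $Kn$ is dominated by $(1+p_0/2)^{n/k_0}$ for large $n$, so a linear threshold is comfortably within reach and one can even take $K$ as large as one likes at the cost of shrinking $C$.

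The second assertion is then immediate: since $\sum_n \mathbb P[|\vartheta^n(a)| < Kn] < \infty$, the Borel–Cantelli lemma gives that almost surely $|\vartheta^n(a)| \geqslant Kn$ for all sufficiently large $n$, hence $|\vartheta^n(a)| \to \infty$ a.s.; a finite union over the finitely many $a \in \mathcal A$ preserves the almost-sure statement.

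The step I expect to be the main obstacle is making the conditional stochastic-domination argument fully rigorous while respecting the Markov structure: one must be careful that, conditioned on $\vartheta^{jk_0}(a) = w$, the images $\vartheta^{k_0}(w_i)$ of the individual letters really are independent (this is the content of the product structure of $P$ and the construction in the Appendix), and that the "length $\geqslant 2$ with probability $\geqslant p_0$" estimate holds uniformly over all letters $b$ that can occur — which is where primitivity and the enlargement of $k_0$ enter. Once these are in place, the remaining work is a routine Chernoff bound for sums of independent bounded increments and a geometric summation.
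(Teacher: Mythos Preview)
Your overall setup matches the paper's: pass to block steps of size $k_0$ (the paper's $N$) so that $\mathbb{P}[|\vartheta^{k_0}(b)|\geqslant 2]\geqslant p_0>0$ for every letter $b$, and deduce the almost-sure statement from the exponential bound via Borel--Cantelli. The martingale $W_n$ you introduce plays no role and can be dropped.

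The gap is in your iteration. Conditioned on $L_j := |\vartheta^{jk_0}(a)| = \ell$, Chernoff gives $\mathbb{P}\bigl[L_{j+1} < (1+p_0/2)L_j \bigm| L_j=\ell\bigr] \leqslant \me^{-c\ell}$, so the failure probability at step $j$ is $\me^{-cL_j}$, not $\me^{-cm}$. Since $L_0=1$, the early failure probabilities are only $O(1)$; summing along the good trajectory gives $\sum_{j\geqslant 0}\me^{-c(1+p_0/2)^j}$, a finite constant but \emph{not} exponentially small in $m$. Your claimed bound ``at most $m\,\me^{-cm}$'' therefore does not follow, and the argument as written does not deliver $\mathbb{P}[L_m < Km]\leqslant \me^{-Cm}$.

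The paper sidesteps this bootstrapping problem with a much coarser count. With $q=1-p_0$, it uses only the uniform bound $\mathbb{P}[L_{j+1}=L_j\mid \mathcal{F}_j]\leqslant q$ and observes that $L_m\leqslant Km$ (for $K<1$) forces at least $m-\lceil Km\rceil$ of the $m$ block steps to be \emph{non-growth} steps, since each growth step adds at least $1$ to the length. A union bound over the locations of the growth steps yields $\mathbb{P}[L_m\leqslant Km]\leqslant (\lceil Km\rceil+1)\binom{m}{\lceil Km\rceil}q^{m-\lceil Km\rceil}$, and Stirling together with $K\to 0$ gives the exponential rate. Your stochastic-domination $L_{j+1}\geqslant L_j+\operatorname{Bin}(L_j,p_0)$ is correct and stronger than what is needed here, but for a \emph{linear} lower bound one only needs the one-trial estimate $\mathbb{P}[\text{no growth}]\leqslant q$; the extra independence among the $\ell$ letters is not exploited.

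Your route is repairable: apply Azuma--Hoeffding to $\log L_j$ instead of tracking multiplicative growth step by step. The increments $\log L_{j+1}-\log L_j$ lie in $[0,k_0\log\mu]$ and, using $\log(1+x)\geqslant x\log 2$ on $[0,1]$ together with your binomial domination, satisfy $\mathbb{E}[\log L_{j+1}-\log L_j\mid\mathcal{F}_j]\geqslant p_0\log 2$. Azuma then gives $\mathbb{P}\bigl[\log L_m < \tfrac{1}{2}p_0 m\log 2\bigr]\leqslant \me^{-cm}$ directly, which is even stronger than the linear bound. The paper's counting argument, however, is shorter and entirely sufficient for the statement as given.
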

\begin{proof}
The second claim is an immediate consequence of the first one by an application of the Borel--Cantelli lemma.

Let $ b \in A$ and $u$ be a word such that $|u|\geqslant 2$ and $\mathbb P[\vartheta(b)(\omega)=u] > 0$. From the primitivity of $\vartheta$, we deduce that there is $\widetilde N\in\N$ such that, for all $a \in \mc A$, we have $b \triangleleft \vartheta^{\widetilde N}(a)$, and consequently $u\triangleleft \vartheta^{\widetilde N+1}(a)$ with positive probability $\widetilde{q}$. Set $N:=\widetilde N+1$. It follows 
\[
\mathbb P\big[|\vartheta^N(a)| \geqslant 2\big] \, \geqslant \, \widetilde q,
\]
hence,
\begin{equation*} 
\mathbb P\big[|\vartheta^N(a)| = 1\big] 
\, \leqslant \, (1-\widetilde q) \, =: \, q \, < \,1.
\end{equation*}

Since this holds for all $a \in \mc A$, we obtain for every $v \in \mc A^+$, $\mathbb P[|\vartheta^N(v)| = |v|] \leqslant q^{|v|} \leqslant q$. By stationarity, this yields for every $k \in \N$,
\begin{equation}
\label{Eq:no-growth}
\mathbb P[|\vartheta^{N k}(a)| \, = \, |\vartheta^{N(k-1)}(a)|] \leqslant q.
\end{equation}
For every $n \in \N$ and $\omega \in \Omega_a$, let us define
\[
J_n(\omega) \, = \, \{j \in \{1,\ldots,n\} \Mid |\vartheta^{Nj}(a)(\omega)| = |\vartheta^{N(j-1)}(a)(\omega)| \}.
\]
Combining Eq.~\eqref{Eq:no-growth} with the Markov property, we obtain for every $J \subset \{1,\ldots,n\}$,
\[
\mathbb P[J \subset J_n] \, \leqslant \, q^{\card(J)}.
\]
Suppose $0<K<1$. Then, $|\vartheta^{Nn}(a)(\omega)| \leqslant K n$ implies $\card (J_n(\omega)) \geqslant n - \lceil Kn \rceil.$
Simple combinatorial arguments yield
\begin{align*}
\mathbb{P} \left[ | \vartheta^{Nn}(a) |  \leqslant Kn \right] 
& \, \leqslant \, \mathbb{P} \left[ \card J_n \geqslant n - \lceil Kn \rceil \right] 
\, = \, \sum_{k=n-\lceil Kn \rceil}^{n} \mathbb{P} \left[ \card J_n = k \right] 
\\ & \, \leqslant \, \sum_{k=n-\lceil Kn \rceil}^{n} {n \choose k} q^k
 \, \leqslant \, \left( \lceil Kn \rceil +1 \right) {n \choose \lceil Kn\rceil } q^{n - \lceil Kn \rceil}.
\end{align*}
%

Using Stirling's formula, we find
\[
\liminf_{n \rightarrow \infty} \frac{1}{n} \log {n \choose \lceil Kn\rceil } 
\, = \, - K \log(K) - (1-K) \log(1-K)
\]
and thus
\begin{align*}
\liminf_{n \rightarrow \infty} \frac{1}{n} \log\left(\mathbb{P} \left[ | \vartheta^{Nn}(a) |  \leqslant Kn \right] \right) 
& \, \leqslant \, (1 -K) \log(q) - K \log(K) - (1-K) \log(1-K) 
\\ & \,\xrightarrow{K \rightarrow 0} \, \log(q) \, < \, 0.
\end{align*}
Thereby, we can find $\widetilde{K}, \widetilde{C} > 0$ such that 
\[
\mathbb{P} \left[ | \vartheta^{Nn}(a) |  \leqslant \widetilde{K} n \right] \, \leqslant \, \me^{- \widetilde{C} n},
\]
for large enough $n$.
Setting $C = \frac{\widetilde{C}}{2N}$ and $K = \frac{\widetilde{K}}{2N}$, the assertion follows by a straightforward interpolation argument.
\end{proof}

\begin{coro}
\label{coro:expanding-lambda}
A primitive random substitution\/ $\vartheta$ is expanding if and only if the corresponding PF-eigenvalue\/ $\lambda$ fulfils\/ $\lambda >1$. 
\end{coro}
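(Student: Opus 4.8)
The plan is to prove both implications by relating the growth of $\mathbb{E}\abs{\vartheta^n(a)}$ to $\lambda^n$ via the mean substitution matrix, and then to invoke Proposition~\ref{length-as-infty} only for the harder direction. Recall from Proposition~\ref{1_prop_diagram} (applied with $\ell=1$, where the expanding hypothesis is not needed) that $\mathbb{E}\Phi(\vartheta^n(a)) = M_{\vartheta}^n\, e_a$ for every $a\in\mc A$ and $n\in\N$, hence $\mathbb{E}\abs{\vartheta^n(a)} = \norm{M_{\vartheta}^n e_a}_1 = \bm{1}^{\mathsf T} M_{\vartheta}^n e_a$, where $\bm 1$ is the all-ones vector. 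Since $M_{\vartheta}$ is primitive, the Perron--Frobenius theorem gives $M_{\vartheta}^n = \lambda^n\bigl(R L^{\mathsf T} + o(1)\bigr)$ as $n\to\infty$, so $\mathbb{E}\abs{\vartheta^n(a)} = \lambda^n\bigl((\bm 1^{\mathsf T}R)L_a + o(1)\bigr)$, and both $\bm 1^{\mathsf T}R$ and $L_a$ are strictly positive. Thus $\mathbb{E}\abs{\vartheta^n(a)}$ grows like a positive constant times $\lambda^n$.

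\emph{Forward direction.} Suppose $\vartheta$ is expanding. I would invoke Proposition~\ref{length-as-infty}: there are constants $K,C>0$ with $\mathbb{P}[\abs{\vartheta^n(a)} < Kn]\leqslant \me^{-Cn}$ for large $n$, so that $\abs{\vartheta^n(a)}\geqslant Kn$ with probability at least $1-\me^{-Cn}$. Taking expectations (and using $\abs{\vartheta^n(a)}\geqslant 1$ always) gives $\mathbb{E}\abs{\vartheta^n(a)} \geqslant Kn(1-\me^{-Cn})$, which tends to infinity. If $\lambda\leqslant 1$, then by the asymptotic $\mathbb{E}\abs{\vartheta^n(a)} \sim (\bm 1^{\mathsf T}R)L_a\,\lambda^n$ the left-hand side would stay bounded (by $(\bm 1^{\mathsf T}R)L_a\cdot\sup_n\lambda^n<\infty$), a contradiction. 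Hence $\lambda>1$. (Alternatively, one can avoid Proposition~\ref{length-as-infty} here: expanding means $\mathbb{P}[\abs{\vartheta(b)}>1]>0$ for some $b$, so $\mathbb{E}\abs{\vartheta(b)}>1 = \abs{b}$; together with $\mathbb{E}\abs{\vartheta(a)}\geqslant 1$ for all $a$ and primitivity, this forces $\mathbb{E}\abs{\vartheta^N(a)}>1$ for all $a$ once $N$ is large enough that $b\triangleleft\vartheta^{N-1}(a)$ with positive probability, and then the spectral asymptotics force $\lambda>1$. I would likely present this self-contained version.)

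\emph{Reverse direction.} Suppose $\vartheta$ is not expanding, i.e.\ $\mathbb{P}[\abs{\vartheta(a)}>1]=0$ for every $a\in\mc A$, which means $\abs{\vartheta(a)}=1$ almost surely for all $a$, so every realisation of $\vartheta(a)$ is a single letter and $\vartheta(a)\in\mc A$ a.s. Then $\abs{\vartheta^n(a)}=1$ a.s.\ for all $n$, hence $\mathbb{E}\abs{\vartheta^n(a)}=1$, i.e.\ $\bm 1^{\mathsf T}M_{\vartheta}^n e_a = 1$ for all $n$ and all $a$; equivalently $M_{\vartheta}$ is (column-)stochastic, so its PF eigenvalue is $\lambda=1$. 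Thus $\lambda\leqslant 1$, which contradicts $\lambda>1$; equivalently, $\lambda>1$ implies $\vartheta$ is expanding.

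The only real obstacle is making sure the two directions together cover the biconditional cleanly: the forward direction uses expanding $\Rightarrow$ $\mathbb{E}\abs{\vartheta^n(a)}\to\infty$ $\Rightarrow$ $\lambda>1$, and the reverse uses non-expanding $\Rightarrow$ $\lambda=1$ $\Rightarrow$ $\lambda\leqslant 1$; the spectral asymptotics $\mathbb{E}\abs{\vartheta^n(a)}\asymp\lambda^n$ is the common computational ingredient and is entirely routine given Proposition~\ref{1_prop_diagram} and the Perron--Frobenius theorem.
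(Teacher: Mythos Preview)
Your proposal is correct and follows essentially the same route as the paper: both directions hinge on the asymptotic $\mathbb{E}\abs{\vartheta^n(a)}\asymp\lambda^n$ (the paper cites Lemma~\ref{1-lemma-exp-convergence} for $\ell=1$, you derive it directly from Proposition~\ref{1_prop_diagram} and Perron--Frobenius), the reverse direction uses the column-stochastic observation in both cases, and for the forward direction both arguments pass through Proposition~\ref{length-as-infty}. The only cosmetic difference is that the paper goes from $\abs{\vartheta^n(a)}\to\infty$ a.s.\ to $\mathbb{E}\abs{\vartheta^n(a)}\to\infty$ via Fatou's lemma, whereas you use the explicit tail bound $\mathbb{P}[\abs{\vartheta^n(a)}<Kn]\leqslant\me^{-Cn}$ to estimate the expectation directly.

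Your parenthetical alternative for the forward direction, however, is genuinely different and worth highlighting: it bypasses Proposition~\ref{length-as-infty} entirely. The cleanest way to phrase it is that the column sums of $M_\vartheta$ are $\mathbb{E}\abs{\vartheta(a)}\geqslant 1$ for all $a$, with strict inequality for the distinguished letter $b$; summing the eigenvector equation $\lambda R_i=\sum_j (M_\vartheta)_{ij}R_j$ over $i$ gives $\lambda\sum_i R_i=\sum_j(\text{col sum }j)\,R_j>\sum_j R_j$ since $R>0$, hence $\lambda>1$. This is more elementary than the paper's argument (no large-deviation input, no Fatou), and self-contained once primitivity is assumed. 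The paper's approach, on the other hand, reuses machinery already in place and emphasises the almost-sure growth of inflation-word lengths, which is of independent interest.
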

\begin{proof}
We will use the assertion of Lemma~\ref{1-lemma-exp-convergence} for the case $\ell = 1$, such that it is still true for non-expanding substitutions. Therefore, $\lambda > 1$ is equivalent to $\mathbb{E}\abs{\vartheta^n(a)} \rightarrow \infty$ as $n \rightarrow \infty$. So, $\lambda >1$ clearly implies that $\vartheta$ is an expanding substitution. On the other hand, if $\vartheta$ is expanding, $\abs{\vartheta^n(a)(\omega)} \rightarrow \infty$, as $n \rightarrow \infty$, for all $\omega \in B$ with $\mathbb{P}(B) = 1$, by Proposition~\ref{length-as-infty}. Therefore,
\[
\liminf_{n \rightarrow \infty} \mathbb{E} \abs{\vartheta^n(a)} 
\, = \, \liminf_{n \rightarrow \infty} \int_B \abs{\vartheta^n(a)(\omega)} \diff \mathbb{P} ( \omega) 
\, \geqslant \, \int_B \liminf_{n \rightarrow \infty} \abs{\vartheta^n(a)(\omega)} \diff \mathbb{P} (\omega) 
\, = \, \infty,
\]
by Fatou's Lemma. Thus, $\lim_{n \rightarrow \infty} \mathbb{E} \abs{\vartheta^n(a)} = \infty$ for all $a \in \mathcal{A}$, implying $\lambda > 1$.
\end{proof}

We finally conclude with the following result --- compare the proof of \cite[Thm. 2.4.10]{koslicki}. 
\begin{prop}\label{thm-expec-conv}
Let\/ $\vartheta$ be an expanding, primitive random substitution on\/ $\mathcal{A}$. For any\/ $\ell \in \mathbb{N}$, let\/ $\vartheta_{\ell}$ be the induced substitution with mean substitution matrix\/ $M_{\vartheta^{}_{\ell}}$ and right normalised PF eigenvector\/ $R^{(\ell)}$. Then, for any\/ $v \in \mathcal{L}^{\ell}_{\vartheta}$ and\/ $a \in \mathcal{A}$,
\[
\lim_{n \rightarrow \infty} \frac{\mathbb{E}\abs{\vartheta^n(a)}_v}{\mathbb{E}\abs{\vartheta^n(a)}} \, = \, R^{(\ell)}_v.
\]
Furthermore, for any\/ $u \in \mc L_{\vartheta}^{\ell}$ with\/ $u_1 = a$,
\[
L_u^{(\ell)} \, = \, \lim_{n \rightarrow \infty} \frac{\mathbb E \abs{\vartheta_{\ell}^n(u)}}{\lambda^n} 
\, = \, \lim_{n \rightarrow \infty} \frac{\mathbb E \abs{\vartheta^n(a)}}{\lambda^n} 
\, = \, L_a .
\]
\end{prop}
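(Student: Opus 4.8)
The plan is to relate the word-counting statistics of the original substitution $\vartheta$ to the letter-counting statistics of the induced substitution $\vartheta_{\ell}$, and then apply Lemma~\ref{1-lemma-exp-convergence} and Proposition~\ref{1_prop_diagram}. For the first identity, fix $v \in \mathcal{L}_{\vartheta}^{\ell}$. The key observation is that an occurrence of $v$ as a subword of $\vartheta^n(a)(\omega)$ corresponds, essentially, to an occurrence of the letter $v$ (viewed as an element of $\mathcal{A}_{\ell}$) in the induced word $\vartheta^n_{\ell}(U)$, where $U$ is any collared version of $a$; so $|\vartheta^n(a)|_v$ and $|\vartheta^n_{\ell}(U)|_v$ differ only by a bounded boundary term. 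I would make this precise by picking $U \in \mc D_{\ell}$ with $U_1 = a$ (for instance, $U = (w_{[k,k+\ell-1]})_{1 \leqslant k \leqslant 1}$ extended as needed using legality of a length-$\ell$ word starting with $a$), noting that $|\vartheta^n(a)(\omega)|_v$ counts length-$\ell$ windows inside $\vartheta^n(a)(\omega)$, while $\vartheta^n_{\ell}(U)(\omega)$ records the windows inside $\vartheta^n(\widetilde{v})(\omega)$ for the underlying word $\widetilde v$; the difference is at most $\ell - 1$ in absolute value, uniformly in $\omega$ and $n$. Taking expectations, $|\mathbb{E}|\vartheta^n(a)|_v - \mathbb{E}|\vartheta^n_{\ell}(U)|_v| \leqslant \ell - 1$, and since $\mathbb{E}|\vartheta^n(a)| \to \infty$ (Corollary~\ref{coro:expanding-lambda} together with Proposition~\ref{length-as-infty}), we get
\[
\lim_{n \to \infty} \frac{\mathbb{E}|\vartheta^n(a)|_v}{\mathbb{E}|\vartheta^n(a)|} = \lim_{n \to \infty} \frac{\mathbb{E}|\vartheta^n_{\ell}(U)|_v}{\mathbb{E}|\vartheta^n_{\ell}(U)|}.
\]
By definition $|\vartheta^n_{\ell}(U)|_v = \Phi_{\ell}(\vartheta^n_{\ell}(U))_v$ and $|\vartheta^n_{\ell}(U)| = \norm{\Phi_{\ell}(\vartheta^n_{\ell}(U))}_1$, so the third convergence relation of Lemma~\ref{1-lemma-exp-convergence} (applied componentwise, with a decomposition of $U$ into single letters of $\mc A_\ell$ and linearity of expectation) gives that this limit equals $R^{(\ell)}_v$.

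For the second chain of equalities, fix $u \in \mathcal{L}_{\vartheta}^{\ell}$ with $u_1 = a$ and regard $u$ as a single letter of $\mathcal{A}_{\ell}$. The first equality $L_u^{(\ell)} = \lim_n \mathbb{E}|\vartheta_{\ell}^n(u)| / \lambda^n$ is exactly the $n \to \infty$ limit of $\frac{1}{\lambda^n}\mathbb{E}|\vartheta^n_{\ell}(u)| \to L^{(\ell)}_u$ already recorded inside the proof of Lemma~\ref{1-lemma-exp-convergence} (it is the $\norm{\cdot}_1$ of Eq.~\eqref{eq_expec_conv} applied to the single basis vector $e_u$, using $\norm{R^{(\ell)}}_1 = 1$). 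For the middle equality, I compare $|\vartheta_{\ell}^n(u)|$ with $|\vartheta^n(a)|$: here $\vartheta^n_{\ell}(u)(\omega)$ is the word of length-$\ell$ windows read off from $\vartheta^n(\widetilde u)(\omega)$ where $\widetilde u = u_1 \cdots u_\ell$ is the underlying length-$\ell$ word, and its length is $|\vartheta^n(\widetilde u)(\omega)| - (\ell - 1)$ when $n$ is large enough that $\vartheta^n$ is expanding (which it is, for large $n$, by Proposition~\ref{length-as-infty}); meanwhile $|\vartheta^n(\widetilde u)(\omega)| = |\vartheta^n(a)(\omega)| + |\vartheta^n(u_2 \cdots u_\ell)(\omega)|$. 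So $\mathbb{E}|\vartheta_{\ell}^n(u)| = \mathbb{E}|\vartheta^n(a)| + \mathbb{E}|\vartheta^n(u_2 \cdots u_\ell)| - (\ell-1)$. Dividing by $\lambda^n$ and letting $n \to \infty$, the first relation of Lemma~\ref{1-lemma-exp-convergence} (for $\ell = 1$) shows each term $\mathbb{E}|\vartheta^n(b)|/\lambda^n$ converges to $L_b$; but also, by the same relation applied with the collared alphabet, the left side converges to $L^{(\ell)}_u$, and I must argue the contributions from $u_2, \dots, u_\ell$ cancel against the corresponding pieces — more cleanly, I would instead directly write $\mathbb{E}|\vartheta^n_\ell(u)|/\lambda^n \to L^{(\ell)}_u$ and $\mathbb{E}|\vartheta^n(a)|/\lambda^n \to L_a$, and prove $L_u^{(\ell)} = L_a$ by observing that $|\vartheta^n_\ell(u)(\omega)|$ equals the number of windows attributable to the first letter $a$ up to a boundary term bounded by $\ell-1$, i.e.\ $\bigl| |\vartheta^n_\ell(u)| - |\vartheta^n(a)| \bigr| \leqslant \ell - 1$ pointwise. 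This is the cleanest route and I will take it: it turns the middle equality into the boundary-term estimate applied once more, and the third equality $\lim_n \mathbb{E}|\vartheta^n(a)|/\lambda^n = L_a$ is then literally the $\ell = 1$ case of Eq.~\eqref{eq_expec_conv} in $\norm{\cdot}_1$, which is valid for non-expanding $\vartheta$ as well.

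\textbf{Main obstacle.} The only genuinely delicate point is the bookkeeping of the boundary terms in the window-to-letter translation: one must verify that identifying an occurrence of $v$ inside $\vartheta^n(a)(\omega)$ with a letter of $\vartheta^n_\ell(U)(\omega)$ really does lose at most $\ell - 1$ occurrences (those length-$\ell$ windows that straddle the right end, or that are "lost" because the collaring $U$ was chosen with underlying word slightly longer than a single letter), and that this bound is uniform in $\omega$ and in $n$. Once this uniform $O(\ell)$ error is in hand, everything else is division by $\mathbb{E}|\vartheta^n(a)| \to \infty$ (for the frequency statement) or by $\lambda^n$ together with the convergences $\mathbb{E}|\vartheta^n_\ell(u)|/\lambda^n \to L^{(\ell)}_u$ and $\mathbb{E}|\vartheta^n(a)|/\lambda^n \to L_a$ from Lemma~\ref{1-lemma-exp-convergence}, so the result follows. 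A secondary, purely technical caveat is to ensure the induced substitution $\vartheta_\ell$ is well-defined and primitive, which is guaranteed by expansiveness and \cite[Prop. 26]{rs}, \cite[Lem. 2.4.9]{koslicki} as already recalled; for $\ell = 1$ no expansiveness is needed, matching the stated generality of the last displayed equality.
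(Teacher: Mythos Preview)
Your overall strategy matches the paper's: choose a legal length-$\ell$ word $u$ with $u_1 = a$, compare $|\vartheta^n(a)|_v$ with $|\vartheta^n_\ell(u)|_v$ up to an $\mathcal O(\ell)$ boundary error, and invoke Lemma~\ref{1-lemma-exp-convergence}. That part is fine.

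Where you go astray is in the handling of $|\vartheta^n_\ell(u)|$. By the very definition of the induced substitution (look at the index range $1 \leqslant k \leqslant |\vartheta^n(v_1\cdots v_m)(\omega)|$ in the definition, with $m=1$ here), one has the \emph{exact} pointwise identity
\[
|\vartheta^n_\ell(u)(\omega)| \, = \, |\vartheta^n(u_1)(\omega)| \, = \, |\vartheta^n(a)(\omega)|,
\]
not merely an approximation up to $\ell-1$. This is what the paper uses, and it makes the middle equality $L^{(\ell)}_u = L_a$ immediate after taking expectations and dividing by $\lambda^n$.

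Your first attempt at this step, writing $|\vartheta^n_\ell(u)(\omega)| = |\vartheta^n(\widetilde u)(\omega)| - (\ell-1)$ and hence $\mathbb E|\vartheta^n_\ell(u)| = \mathbb E|\vartheta^n(a)| + \mathbb E|\vartheta^n(u_2\cdots u_\ell)| - (\ell-1)$, is actually incorrect as a description of the induced substitution, and had you pursued it you would have obtained $L^{(\ell)}_u = L_{u_1} + \cdots + L_{u_\ell}$ rather than $L^{(\ell)}_u = L_a$. Your fallback bound $\bigl||\vartheta^n_\ell(u)| - |\vartheta^n(a)|\bigr| \leqslant \ell-1$ happens to be true (the difference is zero), but it is inconsistent with your own preceding formula, so the argument as written is internally contradictory. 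Once you replace the approximate relation by the exact one coming from the definition, the rest of your proof goes through and coincides with the paper's.
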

\begin{proof}
Let $\ell \in \mathbb{N}$, $a \in \mathcal{A}$ and $v \in \mathcal{L}^{\ell}_{\vartheta}$. Since $\vartheta$ is expanding and primitive, there are words of arbitrary length containing $a$ --- compare Corollary~\ref{1-coro-prim-subst}. Choose any legal word of length $\ell$, $u = a a_2 \cdots a_{\ell} \in \mathcal{L}^{\ell}_{\vartheta}$, starting with the letter $a$. From the definition of the induced substitution, we find $\abs{\vartheta^n_{\ell}(u)(\omega)} = \abs{\vartheta^n(a)(\omega)}$, for all $\omega \in \Omega$. This also implies that $\mathbb{E} \abs{\vartheta^n_{\ell}(u)} = \mathbb{E} \abs{\vartheta^n(a)}$, for every $n \in \mathbb{N}$, yielding the second identity when combined with Eq.~\eqref{eq_expec_conv}.

Furthermore, we have $\abs{\vartheta^n_{\ell}(u)(\omega)}_v = \abs{\vartheta^n(a)(\omega)}_v + \mathcal{O}(\ell)$. This is because the last $\ell-1$ `letters' ($\in \mathcal{L}^{\ell}_{\vartheta}$) of $\vartheta^n_{\ell}(u)(\omega)$ are not entirely  included in $\vartheta^n(a)(\omega)$, whereas all others comprise the subwords of $\vartheta^n(a)(\omega)$. Thus, $\mathbb{E}\abs{\vartheta^n_{\ell}(u)}_v = \mathbb{E} \abs{\vartheta^n(a)}_v + \mathcal{O}(\ell)$. Since $\vartheta$ is expanding, it is $\lambda>1$ and so the error term $\mathcal{O}(\ell)$, when divided by $\mathbb{E} \abs{\vartheta^n(a)}$, gets arbitrarily small, as $n \rightarrow \infty$. Consequently, writing $\abs{\vartheta^n_{\ell}(u)}_v = \Phi_{\ell}(\vartheta^n_{\ell}(u))_v$,
\[
\lim_{n \rightarrow \infty} \frac{\mathbb{E}\abs{\vartheta^n(a)}_v}{\mathbb{E}\abs{\vartheta^n(a)}}
\, = \, \lim_{n \rightarrow \infty} \frac{\mathbb{E}\abs{\vartheta^n_{\ell}(u)}_v}{\mathbb{E} \abs{\vartheta^n_{\ell}(u)}} 
\, = \, R^{(\ell)}_v,
\]
by an application of the third identity in Lemma~\ref{1-lemma-exp-convergence}.
\end{proof}

For the rest of this paper, we set $\vartheta$ to be an expanding primitive random substitution on some finite alphabet $\mc A$ with $m$ letters. This allows us to use the powerful machinery derived from PF theory which we developed in this chapter. 

\section{word frequencies in inflation words}
\label{sec:word_freq-P-as}

The word frequency of some $v \in \mathcal{L}_{\vartheta}^{\ell}$ within a (generally larger) word $w \in \mathcal{L}_{\vartheta}$ will be denoted by
$
\nu_v(w) = \frac{|w|_v}{|w|}.
$
In this section, we will show that 
\[
\nu_v(\vartheta^n(a)) \, = \, \frac{| \vartheta^n(a)|_v}{| \vartheta^n(a) |}
\, \xrightarrow{n \rightarrow \infty} \, R_v^{(\ell)} \] 
holds almost surely, irrespective of the choice of $a \in \mc A$.  To this end, we will split the word $\vartheta^n(a) = \vartheta^{k}(\vartheta^{n-k}(a))$ into a large number of inflation words with level smaller than $n$. One technical obstacle is given by the fact that the (random) word $\vartheta^{n-k}(a)$ might comprise more than one type of letter, such that the action of $\vartheta^k$ on these letters cannot be interpreted as i.i.d. random words. This can be circumvented with the help of an elementary result. It basically establishes that if $\nu_v(\vartheta^n(u))$ deviates from $R_v^{(\ell)}$ by a certain amount, then there is a letter $a \in \mathcal{A}$, comprising a positive fraction of $u$, such that a similar statement holds if we regard only the inflation words $\vartheta^n(u_i)$ for those letters $u_i \triangleleft u$ that coincide with $a$. We set up a slightly more general form for this result to allow for its application also in a different context. 
\begin{lemma}
\label{lemma:shift-to-one-letter-type}
Let\/ $c = \sum_{i=1}^m c_i$ and\/ $d = \sum_{i=1}^m d_i$ with\/ $0 \leqslant c_i \leqslant d_i$ and\/ $1 \leqslant d_i$ for all\/ $i \in \{1,\ldots,m\}$. Further, suppose that 
\[
\left| \frac{c}{d} - K \right| \, > \, \delta ,
\]
for some\/ $0 \leqslant \delta,K \leqslant 1$. Then, there exists some\/ $j \in \{1,\ldots,m\}$ such that\/ $d_j \geqslant \frac{\delta}{2m} d$ and
\begin{equation}
\label{eq:single-type-deviation}
\biggl| \frac{c_j}{d_j} - K \biggr| \, \geqslant \, \frac{\delta}{2} .
\end{equation}
Further, for this choice of\/ $j$ and any\/ $f \in \mathbb{R}_+$,
\begin{equation}
\label{eq:modified-single-type-deviation}
\left| \frac{d_j}{f} - 1 \right| \, \geqslant \, \frac{\delta}{8} \quad \mbox{or} \quad \left| \frac{c_j}{f} - K \right| 
\, \geqslant \, \frac{\delta}{8}.
\end{equation}
\end{lemma}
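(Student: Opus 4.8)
The plan is to prove the two assertions separately, both by elementary averaging/triangle-inequality reasoning. First I would establish \eqref{eq:single-type-deviation} by an averaging argument: writing $c/d = \sum_i (d_i/d)(c_i/d_i)$ as a convex combination of the ratios $c_i/d_i$ with weights $d_i/d$ summing to $1$, the hypothesis $|c/d-K|>\delta$ forces at least one index $i$ with $|c_i/d_i - K| > \delta$ (a convex combination cannot be farther from $K$ than all of its constituents). So certainly there is some $j$ with $|c_j/d_j - K| \geq \delta > \delta/2$. The extra content is the lower bound $d_j \geq \tfrac{\delta}{2m} d$. For this I would split the index set into the ``large'' indices $I_{\mathrm{big}} = \{i : d_i \geq \tfrac{\delta}{2m} d\}$ and the rest, note that the total weight $\sum_{i \notin I_{\mathrm{big}}} d_i/d < m \cdot \tfrac{\delta}{2m} = \tfrac{\delta}{2}$, and bound the contribution of the small indices to $|c/d - K|$ by their total weight (since each $|c_i/d_i - K| \leq 1$), which is $< \delta/2$. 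Hence the large indices alone must already pull $c/d$ at least $\delta - \delta/2 = \delta/2$ away from $K$, so by the same convex-combination remark restricted to $I_{\mathrm{big}}$ there is some $j \in I_{\mathrm{big}}$ with $|c_j/d_j - K| \geq \delta/2$. This $j$ satisfies both $d_j \geq \tfrac{\delta}{2m}d$ and \eqref{eq:single-type-deviation}.

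For \eqref{eq:modified-single-type-deviation}, fix the $j$ just produced and an arbitrary $f \in \R_+$, and argue by contraposition: suppose both $|d_j/f - 1| < \delta/8$ and $|c_j/f - K| < \delta/8$. From the first inequality, $f$ and $d_j$ are comparable — in fact $|f - d_j| < \tfrac{\delta}{8} f$, so $f \leq d_j/(1-\delta/8) \leq 2 d_j$ (using $\delta \leq 1$). Then
\[
\left| \frac{c_j}{d_j} - K \right|
\leqslant \left| \frac{c_j}{d_j} - \frac{c_j}{f} \right| + \left| \frac{c_j}{f} - K \right|
= \frac{c_j}{d_j} \cdot \frac{|f - d_j|}{f} + \left| \frac{c_j}{f} - K \right|.
\]
Now $c_j/d_j \leq 1$, and $|f-d_j|/f < \delta/8$, so the first term is $< \delta/8$; the second is $< \delta/8$ by assumption; hence $|c_j/d_j - K| < \delta/4 < \delta/2$, contradicting \eqref{eq:single-type-deviation}. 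Therefore at least one of the two inequalities in \eqref{eq:modified-single-type-deviation} must hold.

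I do not expect a genuine obstacle here; the statement is purely a combinatorial inequality with no dynamical content, and both parts are standard triangle-inequality / pigeonhole manipulations. The only point requiring a little care is getting the constants to line up — in particular making sure that the $\delta/2$ gap surviving after discarding small indices really does produce a single index with deviation $\geq \delta/2$ (one should double-check whether the clean ``one constituent is at least as far as the average'' step needs the weights to be renormalised over $I_{\mathrm{big}}$, which is harmless since their total is positive), and then choosing the thresholds $\delta/8$ in \eqref{eq:modified-single-type-deviation} small enough that the triangle-inequality slack in the last display stays strictly below $\delta/2$. The factor $2m$ and the successive halvings are exactly tuned for this; if anything, the constants are slightly generous.
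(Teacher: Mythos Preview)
Your proposal is correct and follows essentially the same approach as the paper. The only cosmetic difference is that for the first part the paper partitions indices by deviation size (setting $S_1 = \{i : |c_i - K d_i| \geq \tfrac{\delta}{2} d_i\}$) and argues those must have small $d_i$, whereas you partition by weight size and argue the big-weight indices must contain one with large deviation; these are dual formulations of the same pigeonhole bound, and the second part is an identical triangle-inequality computation in both.
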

\begin{proof}
Suppose Eq.~\eqref{eq:single-type-deviation} does not hold and set  
\[ S_1 \, = \, \left\{ 1\leqslant i \leqslant m \Mid | c_i - K d_i | \geqslant \frac{\delta}{2} d_i \right\}
\]
and $S_2 = \{1,\ldots,m\} \setminus S_1$. Then, $i \in S_1$ implies $d_i < \frac{\delta}{2m} d$. Note that generally $| c_i - K d_i| \leqslant d_i$. Consequently,
\begin{align*}
| c - Kd| 
& \, \leqslant \, \sum_{i\in S_1} \left| c_i - K d_i \right| + \sum_{i \in S_2} \left| c_i - K d_i \right|
\, < \, \sum_{i \in S_1} d_i + \sum_{i \in S_2} \frac{\delta}{2}d_i
 \, < \, \sum_{i \in S_1} \frac{\delta}{2m} d + \frac{\delta}{2} d  \, \leqslant \, \delta d,
\end{align*}
in contradiction to the assumption. This proves Eq.~\eqref{eq:single-type-deviation}. Assuming $|d_j/f -1| \leqslant \delta/8$, we obtain, 
\[
\frac{\delta}{4} \, \leqslant \, \frac{d_j}{f} \frac{\delta}{2} 
\, \leqslant \, \left| \frac{c_j}{f} - K \frac{d_j}{f} \right| 
\, \leqslant \, \left| \frac{c_j}{f} - K \right| + K\left| 1 - \frac{d_j}{f} \right| 
\, \leqslant \, \left| \frac{c_j}{f} - K \right| + \frac{\delta}{8} ,
\]
and thus Eq.~\eqref{eq:modified-single-type-deviation} holds.
\end{proof}

\begin{lemma}
\label{lemma:word-freq-prep}
Le\/ $v \in \mc L_{\vartheta}^{\ell}$ and\/ $\varepsilon >0$. There is a\/ $k_0 \in \mathbb{N}$ such that, for all\/ $k \geqslant k_0$, there is a\/ $C>0$ and\/ $n_0 \in \N$ with the following property. If\/ $n \geqslant n_0$, we have
\[
\mathbb{P} \left[ \bigl| \nu_v(\vartheta^k(u)) - R_v^{(\ell)} \bigr| > \varepsilon \right] \, \leqslant \, \me^{-C n} ,
\] 
for all $u \in \mc L_{\vartheta}^n$.
\end{lemma}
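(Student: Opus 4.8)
The plan is to fix the legal word $u = u_1 \cdots u_n \in \mathcal{L}_\vartheta^n$ (we may assume $0 < \varepsilon < 1$, since for $\varepsilon \geqslant 1$ the event is empty, as $\nu_v(\vartheta^k(u))$ and $R_v^{(\ell)}$ both lie in $[0,1]$) and to use that the inflation images $\vartheta^k(u_1), \ldots, \vartheta^k(u_n)$ of the individual letters of the \emph{fixed} word $u$ can be realised as \emph{independent} random words (see the Appendix), each of length at most $L_k := \Lambda^k < \infty$, where $\Lambda = \max_{a \in \mathcal{A}} \max_{w \in \hat{\vartheta}(a)} |w|$. This makes a standard concentration inequality for sums of bounded independent random variables (Hoeffding) available. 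Writing $X_i = |\vartheta^k(u_i)|_v$ and $Y_i = |\vartheta^k(u_i)|$, the first step is a block decomposition: counting the occurrences of $v$ in $\vartheta^k(u) = \vartheta^k(u_1)\cdots\vartheta^k(u_n)$ and separating those lying inside a single block from those straddling a block boundary yields, pointwise,
\[
\sum_{i=1}^n X_i \, \leqslant \, |\vartheta^k(u)|_v \, \leqslant \, \sum_{i=1}^n X_i + (\ell-1)(n-1) , \qquad |\vartheta^k(u)| \, = \, \sum_{i=1}^n Y_i ,
\]
so that $\nu_v(\vartheta^k(u))$ differs from $\frac{\sum_i X_i}{\sum_i Y_i}$ by at most $\frac{(\ell-1)(n-1)}{\sum_i Y_i}$; it then remains to control this ratio and this error term.

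Next I would pin down $k_0$ by two requirements on $k$, both achievable for large $k$ by the results of Section~\ref{sec:word_freq_exp}. First, by Proposition~\ref{thm-expec-conv} and finiteness of $\mathcal{A}$, choose $k_0$ so that $|\frac{\mathbb{E}|\vartheta^k(a)|_v}{\mathbb{E}|\vartheta^k(a)|} - R_v^{(\ell)}| < \frac{\varepsilon}{4}$ for all $a \in \mathcal{A}$ and $k \geqslant k_0$. Since $\mathbb{E}\sum_i X_i = \sum_{a \in \mathcal{A}} |u|_a\,\mathbb{E}|\vartheta^k(a)|_v$ and $\mathbb{E}\sum_i Y_i = \sum_{a \in \mathcal{A}} |u|_a\,\mathbb{E}|\vartheta^k(a)|$, the quantity $\frac{\mathbb{E}\sum_i X_i}{\mathbb{E}\sum_i Y_i}$ is a convex combination of the numbers $\frac{\mathbb{E}|\vartheta^k(a)|_v}{\mathbb{E}|\vartheta^k(a)|}$ with the nonnegative weights $|u|_a\,\mathbb{E}|\vartheta^k(a)|$ (not all zero), and hence also lies within $\frac{\varepsilon}{4}$ of $R_v^{(\ell)}$ --- uniformly in $u$. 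Second, since $\vartheta$ is expanding and primitive we have $\lambda > 1$ by Corollary~\ref{coro:expanding-lambda} and $\mathbb{E}|\vartheta^k(a)| \to \infty$ by Lemma~\ref{1-lemma-exp-convergence}, so I would additionally require $\mu_k := \min_{a \in \mathcal{A}} \mathbb{E}|\vartheta^k(a)| > \frac{8\ell}{\varepsilon}$ for $k \geqslant k_0$; then $\mathbb{E}\sum_i Y_i \geqslant \mu_k n$ for every $u \in \mathcal{L}_\vartheta^n$.

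Fixing $k \geqslant k_0$, the third and main step is a concentration estimate. Applying Hoeffding's inequality to the independent $[0,L_k]$-valued variables $X_i$ and the independent $[1,L_k]$-valued variables $Y_i$, with deviation threshold $\frac{\varepsilon}{8}\mathbb{E}\sum_i Y_i \geqslant \frac{\varepsilon}{8}\mu_k n$, and taking a union bound, shows that outside an event of probability at most $4\me^{-c_1 n}$, with a rate $c_1 > 0$ depending only on $k$ and $\varepsilon$, both $\sum_i X_i$ and $\sum_i Y_i$ lie within $\frac{\varepsilon}{8}\mathbb{E}\sum_i Y_i$ of their means. On the complementary event one has $\sum_i Y_i \geqslant \frac12\mathbb{E}\sum_i Y_i \geqslant \frac{\mu_k n}{2}$, so the boundary error is at most $\frac{2(\ell-1)(n-1)}{\mu_k n} < \frac{2\ell}{\mu_k} < \frac{\varepsilon}{4}$, while the elementary bound $|\frac{a}{b} - \frac{a'}{b'}| \leqslant \frac{|a-a'|}{b} + \frac{a'}{b'}\cdot\frac{|b'-b|}{b}$, applied with $a = \sum_i X_i$, $b = \sum_i Y_i$, $a' = \mathbb{E}\sum_i X_i$, $b' = \mathbb{E}\sum_i Y_i$ and using $\frac{a'}{b'} \leqslant 1$ (since $|w|_v \leqslant |w|$ for every word $w$), gives $|\frac{\sum_i X_i}{\sum_i Y_i} - \frac{\mathbb{E}\sum_i X_i}{\mathbb{E}\sum_i Y_i}| \leqslant \frac{\varepsilon}{2}$. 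By the triangle inequality, $|\nu_v(\vartheta^k(u)) - R_v^{(\ell)}| < \varepsilon$ on the good event, so $\mathbb{P}[|\nu_v(\vartheta^k(u)) - R_v^{(\ell)}| > \varepsilon] \leqslant 4\me^{-c_1 n}$, which is at most $\me^{-Cn}$ for $C := \frac{c_1}{2}$ and all $n$ beyond some $n_0$; since $\mu_k$, $L_k$, $c_1$ and $n_0$ do not depend on $u$, this is the asserted uniform bound.

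The step I expect to be the main obstacle is controlling the block-boundary error term $\frac{(\ell-1)(n-1)}{\sum_i Y_i}$: a priori it is of order $\ell$, not small, and it becomes negligible only once a typical inflation block $\vartheta^k(u_i)$ is far longer than $\ell$ --- which is exactly what forces the second defining property of $k_0$, and hence why the statement quantifies over $k \geqslant k_0$ rather than over all $k$. The remaining care is bookkeeping: one must phrase the Hoeffding threshold, the rate $c_1$ and the length $n_0$ purely in terms of $\mu_k$, $L_k$ and the uniform-in-$a$ expectation asymptotics from Section~\ref{sec:word_freq_exp}, so that they do not depend on the particular legal word $u$.
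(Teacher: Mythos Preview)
Your argument is correct and genuinely streamlines the paper's proof. The paper first invokes the combinatorial pigeonhole Lemma~\ref{lemma:shift-to-one-letter-type} to isolate a single letter type $a \in \mc A$ on which the deviation persists, so that the family $(|\vartheta^k(u_j)|_v)_{j \in P_a}$ becomes i.i.d.\ and Cram\'er's large deviation theorem applies; a second application of the same lemma handles the overlap term. You sidestep this reduction entirely by two observations: first, Hoeffding's inequality needs only independence and boundedness, not identical distribution, so it applies directly to the full families $(X_i)_{i=1}^n$ and $(Y_i)_{i=1}^n$; second, the ratio $\frac{\mathbb E \sum_i X_i}{\mathbb E \sum_i Y_i}$ is a convex combination of the per-letter ratios $\frac{\mathbb E|\vartheta^k(a)|_v}{\mathbb E|\vartheta^k(a)|}$, which by Proposition~\ref{thm-expec-conv} all converge to $R_v^{(\ell)}$, giving the required centering uniformly in $u$ without any case distinction.

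What your approach buys is brevity and transparency: Lemma~\ref{lemma:shift-to-one-letter-type} becomes unnecessary for this lemma, and the dependence of the constants $c_1, n_0$ on $\varepsilon, \mu_k, L_k$ alone is immediate from Hoeffding's explicit bound. What the paper's approach buys is a potentially sharper exponential rate (Cram\'er gives the optimal large-deviation constant for i.i.d.\ sums), though this plays no role in the sequel, where only summability of $\me^{-Cn}$ is used. Your handling of the boundary term via the second defining property of $k_0$ (forcing $\mu_k > 8\ell/\varepsilon$) is essentially the same mechanism as the paper's case~(II), just packaged more directly.
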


\begin{proof}
Let us start with some arbitrary $n \in \mathbb{N}$ and $u \in \mc L_{\vartheta}^n$. For any letter $a \in \mc A$, denote the positions of the letter $a$ within the word $u$ by $P_a = \{ 1 \leqslant i \leqslant n \Mid u_i = a \}$, possibly empty, and $n_a = \card P_a= |u|_a$. Assume $\card \mc A = m$ and 
\[
\left| \frac{|\vartheta^k(u)|_v}{|\vartheta^k(u)|} - R_v^{(\ell)} \right| \, > \, \varepsilon.
\] 
Note that, in contrast to the total length $|\vartheta^k(u)| = \sum_{j=1}^n |\vartheta^k(u_j)|$, for the occurrences of $v$ within $\vartheta^k(u)$ it is not enough to count how often $v$ appears in any inflation word,
\[
| \vartheta^k(u) |_v  \, = \, \sum_{j=1}^n | \vartheta^k(u_j) |_v + \operatorname{Ovl},
\]
where the additional term $\operatorname{Ovl}$ (in general a random variable) gives the number of times that the position of $v$ within $\vartheta^k(u)$ overlaps two (or more) neighbouring inflation words. A situation where such an overlap occurs is depicted in Figure~\ref{Fig:overlap-position}. Since there are only $n-1$ different boundaries of inflation words of the form $\vartheta^k(u_i)$, the bound $0 \leqslant \operatorname{Ovl} < n \ell$ follows immediately. Setting $d = |\vartheta^k(u)|$ and $c =\sum_{j=1}^n | \vartheta^k(u_j) |_v$, one of the following two cases needs to hold,
\begin{figure}
\begin{tikzpicture}
\node at (5,0) {$ | \underbrace{\cdots \cdots}_{\vartheta^k(u_1)} | \underbrace{\cdots \cdot}_{\vartheta^k(u_2)} | \cdots \cdots \cdot | \cdot \cdots \cdot \cdot | \cdot \cdots | \underbrace{ \cdots \cdots \cdots}_{\vartheta^k(u_n)} | \quad = \vartheta^k(u)$};
\draw (3.65,0.7) -- node[above]{$\scriptstyle v$} (4.3,0.7);
\draw (3.65,0.6) -- (3.65,0.8)  (4.3,0.6) -- (4.3,0.8);
\end{tikzpicture}
\caption{Example of a case where $v$ is not entirely included in any of the inflation words $\vartheta^k(u_j)$ but still occurs as a subword of $\vartheta^k(u)$.}
\label{Fig:overlap-position}
\end{figure}
\[
 \mbox{(I)} \quad \left| \frac{c}{d} - R_v^{(\ell)} \right| \, > \, \frac{\varepsilon}{2} \quad \quad \mbox{or} \quad \quad 
\mbox{(II)} \quad \ell n \, > \, \frac{\varepsilon}{2} \bigl|\vartheta^k(u) \bigr| .
\]
Let us first treat case (I). We want to translate this to a similar statement that involves only one type of letter as a starting point for the random inflation $\vartheta^k$; compare Figure~\ref{Fig:2-step-inflation} for an illustrative example.
\begin{figure}
\begin{tikzpicture}
\node at (10,7.5) {$\cdot \cdot a \cdots a \cdots \cdot a \cdots$};
\node at (13,7.5) {$=u$};
\draw (8.1,7.2) -- (6.95,5.5)  (11.8,7.2) -- (13.0,5.5);

\draw[dashed] (8.8,7.2) -- (8,5.5) (9.77,7.2) -- (9.7,5.5) (10.9,7.2) -- (11.5,5.5);

\node at (10,5) {$\cdots \vartheta^k(a) \cdots \cdot \vartheta^k(a) \cdots  \cdot \cdot \vartheta^k(a) \cdots \cdot \cdot$};
\node at (14.2,5) {$=\vartheta^k(u)$};
\end{tikzpicture}
\caption{Inflation procedure of $\vartheta^k$ on $u$. The occurrences of some $a \in \mc A$ in $u$ are highlighted. These are mapped to i.i.d random words, distributed like $\vartheta^k(a)$, which appear as subwords of $\vartheta^k(u)$.}
\label{Fig:2-step-inflation}
\end{figure}
With the identifications $\delta = \frac{\varepsilon}{2}$, $K = R_v^{(\ell)}$, $c_a = \sum_{j \in P_a} |\vartheta^k(u_j)|_v$, $d_a = \sum_{j \in P_a} |\vartheta^k(u_j)|$ we can apply Lemma~\ref{lemma:shift-to-one-letter-type}. That is, for all realisations of the random variables that we consider, there is some letter $a \in \mc A$ such that $\sum_{j \in P_a} |\vartheta^k(u_j)| \geqslant \frac{\varepsilon}{4m} |\vartheta^k(u)|$ and (choosing $f = \lambda^k L_a n_a$, in the notation of Lemma~\ref{lemma:shift-to-one-letter-type}),
\[
\biggl| \frac{1}{n_a} \sum_{j \in P_a} \frac{ |\vartheta^k(u_j)|_v}{\lambda^k L_a} - R_v^{(\ell)} \biggr| \, \geqslant \, \frac{\varepsilon}{16}
 \quad \mbox{or} \quad 
 \biggl| \frac{1}{n_a} \sum_{j \in P_a} \frac{ |\vartheta^k(u_j)|}{\lambda^k L_a} - 1 \biggr| \, \geqslant \, \frac{\varepsilon}{16},
\]
which we call case (Ia) and (Ib), respectively. Setting $\mu_a = \max \{ k \in \N \Mid \mathbb P[|\vartheta(a)| =k]>0 \}$ and $\mu = \max_{a \in \mc A} \mu_a$ as the maximal possible inflation factor, we have the crude estimate,
\[
\mu^k n_a \, \geqslant \, \sum_{j \in P_a} |\vartheta^k(u_j)| \, \geqslant \, \frac{\varepsilon}{4m} |\vartheta^k(u)| \, \geqslant \, \frac{\varepsilon}{4m} n,
\]
that is, there is some constant $C_1(k,\varepsilon) = C_1 > 0$ such that $n_a \geqslant C_1 n$.
Let us define the random variables
\[
X^k_{(j)} \, = \, \frac{ |\vartheta^k(u_j)|_v}{\lambda^k L_a} \quad \mbox{and} \quad Y^k_{(j)} \, = \, \frac{ |\vartheta^k(u_j)|}{\lambda^k L_a},
\]
and note that each of the corresponding families with $j \in P_a$ comprise i.i.d. random variables with the same distribution $X^k$ and $Y^k$, respectively.
By Lemma~\ref{1-lemma-exp-convergence} and Proposition~\ref{thm-expec-conv},
\begin{align*}
\mathbb{E} Y^k_{(j)} \, \xrightarrow{k \rightarrow \infty} \, 1 \quad \mbox{and} \quad \mathbb{E}X^k_{(j)}  \, \xrightarrow{k \rightarrow \infty} \, R_v^{(\ell)},
\end{align*}
for any $j \in P_a$. 
We can thus find a $k^{(1)}_0 \in \mathbb{N}$ such that, for all $k \geqslant k^{(1)}_0$,
\[
\biggl| \frac{1}{n_a} \sum_{j \in P_a} X^k_{(j)} - \mathbb{E} X^k \biggr| \, \geqslant \, \frac{\varepsilon}{32}
\quad \mbox{or} \quad \biggl| \frac{1}{n_a} \sum_{j \in P_a} Y^k_{(j)} - \mathbb{E}  Y^k \biggr| \, \geqslant \, \frac{\varepsilon}{32}.
\]
Let us fix such a $k$. Then, the joint distribution of the family $\bigl(X^k_{(j)} \bigr)_{j \in P_a}$ does not depend on the structure of $u$ or the exact form of $P_a$, but only on $n_a = \card P_a$. Also, since $X^k$ can take only finitely many values, the bound 
$
\mathbb{E} \me^{tX^k} < \infty
$
is trivial for all $t \in \mathbb{R}$. By Cram\'{e}r's theorem on  large deviations \cite[Thm.~I.4]{hollander}, we therefore obtain,
\[
\lim_{n_a \rightarrow \infty} \frac{1}{n_a} \log \mathbb{P} \biggl[\biggl| \frac{1}{n_a} \sum_{j \in P_a} X^k_{(j)} - \mathbb{E} X^k \bigg| \geqslant \frac{\varepsilon}{32} \biggr] \, < \, 0,
\]
and analogously for the family $\bigl(Y^k_{(j)} \bigr)_{j \in P_a}$. Thus, for large enough $n$ and any $u \in \mc L_n$,
\[
\mathbb{P}[\mbox{(I)}] \, \leqslant \, \sum_{a \in \mc A} \mathbb{P}[\mbox{(Ia) holds for } a] + \sum_{a \in \mc A} \mathbb{P}[\mbox{(Ib) holds for } a]
\, \leqslant \, \sum_{a \in \mc A} \me^{-\widetilde{C} n_a } 
\, \leqslant \, | \mc A| \me^{ - \widetilde{C} C_1 n}.
\]
We now turn to case (II). This can be rephrased as 
\[
\frac{\sum_{a \in \mc A} |u|_a}{\sum_{a \in \mc A} \sum_{j \in P_a} | \vartheta^k(u_j)|} \, > \, \frac{\varepsilon}{2 \ell},
\]
such that we can again apply Lemma~\ref{lemma:shift-to-one-letter-type}, this time with $\delta= \frac{\varepsilon}{2 \ell}$, $K = 0$, $c_a = |u_a|$ and $d_a = \sum_{j \in P_a}|\vartheta^k(u_j)|$, yielding the existence of some $a \in \mc A$ with $\sum_{j \in P_a}|\vartheta^k(u_j)| \geqslant \frac{\epsilon}{4 \ell m} |\vartheta^k(u)|$ and, recalling $n_a = |u|_a$,
\begin{equation}
\label{Eq:5-upper-inflation-bound}
\frac{1}{n_a} \sum_{j \in P_a} | \vartheta^k(u_j)| \, < \, \frac{4\ell}{\varepsilon}.
\end{equation}
Note that all the random words $\vartheta^k(u_j)$ are distributed as $\vartheta^k(a)$ and again i.i.d. for $j \in P_a$.
At the same time,
\[
n_a \, > \, \frac{\varepsilon}{4 \ell} \sum_{j \in P_a} | \vartheta^k(u_j) | \, \geqslant \, \frac{\varepsilon^2}{16 \ell^2 m} | \vartheta^k(u) | \, \geqslant \, \frac{\varepsilon^2}{16 \ell^2 m} n.
\]
Since $\mathbb{E}|\vartheta^k(u_j)| \xrightarrow{k \to \infty} \infty$, as $k \rightarrow \infty$, we can find a $k_0^{(2)}$ such that, for all $k \geqslant k_0^{(2)}$, Eq.~\eqref{Eq:5-upper-inflation-bound} implies
\[
\frac{1}{n_a} \sum_{j \in P_a} | \vartheta^k(u_j)| \, < \, \frac{1}{2} \mathbb{E}|\vartheta^k(a)|.
\]
Similarly to case (I), we can apply basic large deviation results to conclude that the probability for this to happen is bounded by $\me^{-C_2 n}$, for some $C_2 > 0$ and $n \in \mathbb{N}$ large enough.
\\Finally, taking $k_0 = \max\{k_0^{(1)}, k_0^{(2)}\}$ and adjusting the constant in the exponent, $k \geqslant k_0$ implies
\[
\mathbb{P} \left[ \bigl| \nu_v(\vartheta^k(u)) - R_v^{(\ell)} \bigr| > \varepsilon \right] \, \leqslant \, \mathbb{P}[\mbox{(I)}] + \mathbb{P}[\mbox{(II)}] \, \leqslant \, \me^{-C n}.
\]
for large enough $n \in \mathbb{N}$.
\end{proof}

\begin{prop}
\label{prop:as-word-frequencies}
Let\/ $v \in \mc L_{\vartheta}^{\ell}$ be a $\vartheta$-legal word of length\/ $\ell \in\N$. Then, in the limit of large inflation words, the word frequency of\/ $v$ exists and is the same for almost all realisations. It coincides with the corresponding entry\/ $R_v^{(\ell)}$ of the right PF eigenvector of the induced substitution matrix. More precisely, for any $a \in \mathcal{A}$,
\[
\lim_{n \to \infty} \nu_v (\vartheta^n(a)) = R_v^{(\ell)}
\]
holds almost surely.
\end{prop}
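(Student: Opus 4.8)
The plan is to upgrade the uniform exponential estimate of Lemma~\ref{lemma:word-freq-prep} to an almost-sure statement by writing $\vartheta^n(a) = \vartheta^k(\vartheta^{n-k}(a))$, conditioning on the inner word, and invoking Borel--Cantelli.

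Fix $v \in \mc L_\vartheta^\ell$ and a letter $a \in \mc A$. Given $\varepsilon > 0$, Lemma~\ref{lemma:word-freq-prep} provides a threshold $k_0$; I would fix some $k \geqslant k_0$ and the associated constants $C > 0$, $n_0 \in \N$, so that $g(u) := \mathbb P[\,\abs{\nu_v(\vartheta^k(u)) - R_v^{(\ell)}} > \varepsilon\,]$ satisfies $g(u) \leqslant \me^{-C\abs{u}}$ for all legal words $u$ with $\abs{u} \geqslant n_0$, while trivially $g \leqslant 1$ everywhere. By the Markov property established in Section~\ref{sec:rand_subst}, conditioned on $\vartheta^{n-k}(a) = u$ the word $\vartheta^n(a)$ is distributed as $\vartheta^k(u)$; and since every realisation of $\vartheta^{n-k}(a)$ is a legal word, this yields the exact identity $\mathbb P[\,\abs{\nu_v(\vartheta^n(a)) - R_v^{(\ell)}} > \varepsilon\,] = \mathbb E[\,g(\vartheta^{n-k}(a))\,]$.

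Next I would split this expectation according to whether $\abs{\vartheta^{n-k}(a)} \geqslant K(n-k)$, where $K$ (together with some $\widetilde C$) is the constant from Proposition~\ref{length-as-infty}. On the first event one has $g(\vartheta^{n-k}(a)) \leqslant \me^{-CK(n-k)}$ as soon as $K(n-k) \geqslant n_0$, and the complementary event has probability at most $\me^{-\widetilde C(n-k)}$ for $n$ large by Proposition~\ref{length-as-infty}. Combining these gives $\mathbb P[\,\abs{\nu_v(\vartheta^n(a)) - R_v^{(\ell)}} > \varepsilon\,] \leqslant \me^{-cn}$ for some $c = c(v,\varepsilon) > 0$ and all large $n$, so the series $\sum_n \mathbb P[\,\abs{\nu_v(\vartheta^n(a)) - R_v^{(\ell)}} > \varepsilon\,]$ converges. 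Applying this with $\varepsilon = 1/j$ for each $j \in \N$, Borel--Cantelli produces a full-measure event on which $\abs{\nu_v(\vartheta^n(a)) - R_v^{(\ell)}} \leqslant 1/j$ eventually; intersecting over the countably many $j \in \N$ and over the finitely many $a \in \mc A$ gives the claimed almost-sure convergence $\nu_v(\vartheta^n(a)) \to R_v^{(\ell)}$.

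The main obstacle — and the reason Lemma~\ref{lemma:word-freq-prep} is deliberately phrased with a bound uniform over all legal words of a given length — is the conditioning step: one needs the estimate on $g(u)$ to be insensitive to the fine structure of the random inner word $\vartheta^{n-k}(a)$ in order to integrate it against that word's otherwise unwieldy distribution, and one needs $\abs{\vartheta^{n-k}(a)}$ to be linear in $n$ with a probability so close to $1$ that the Borel--Cantelli series still converges. This is precisely the content of the exponential tail in Proposition~\ref{length-as-infty}; the bare almost-sure growth statement $\abs{\vartheta^m(a)} \to \infty$ would not be enough to close the argument.
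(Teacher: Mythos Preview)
Your proposal is correct and is essentially the same argument as the paper's: both write $\vartheta^{n}(a)$ as $\vartheta^k$ applied to the inner word, condition on the inner word, feed Lemma~\ref{lemma:word-freq-prep} the uniform estimate, control the short-inner-word event via the exponential tail of Proposition~\ref{length-as-infty}, and conclude by Borel--Cantelli. Your write-up is in fact slightly more explicit than the paper's about the Markov conditioning step and about passing from fixed $\varepsilon$ to almost-sure convergence via $\varepsilon = 1/j$.
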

\begin{proof}
Let $\varepsilon > 0$ and $a \in \mc A$. We are going to establish that
\begin{equation}
\label{eq:freq-var-summable}
\sum_{n=1}^{\infty} \mathbb{P} \left[ \bigl| \nu_v(\vartheta^n(a)) - R_v^{(\ell)} \bigr| > \varepsilon \right] \, < \, \infty.
\end{equation}
From this, the assertion follows by a standard application of the Borel--Cantelli lemma.

Due to Proposition~\ref{length-as-infty}, there are some $\widetilde{C},K>0$ such that $\mathbb{P} [ | \vartheta^n(a) | < K n] \leqslant \me^{-\widetilde{C} n}$ for large enough $n \in \mathbb{N}$. Choose $C>0$ and a fixed $k \geqslant k_0$ as in Lemma~\ref{lemma:word-freq-prep}. Then, for large enough $n \in \mathbb{N}$, 
\begin{align*}
\mathbb{P} \left[ \bigl| \nu_v(\vartheta^{k+n}(a)) - R_v^{(\ell)} \bigl| > \varepsilon \right] 
& \, \leqslant \, \me^{- \tilde{C}n} + \sum_{u \in \mc L ,\, |u| \geqslant Kn} \mathbb{P} \left[ \bigl| \nu_v(\vartheta^{k}(u)) - R_v^{(\ell)} \bigl| > \varepsilon \right] \mathbb{P}[\vartheta^{n}(a) = u] 
\\ & \, \leqslant \, \me^{-\widetilde{C} n}+ \me^{- CKn},
\end{align*}
by Lemma~\ref{lemma:word-freq-prep}. This verifies Eq.~\eqref{eq:freq-var-summable}.
\end{proof}

\begin{remark}
The statement of Proposition~\ref{prop:as-word-frequencies} for the special case\/ $\ell =1$ (frequency of letters) follows from a standard result on branching processes. More precisely, performing the Abelianisation, we obtain a vector-valued Markov process\/ $(\Phi(\vartheta^n(a)))_{n \in \mathbb{N}}$ which forms a multitype Galton--Watson (GW) process as described in \cite{mode} and \cite{athreya}. From \cite[Ch.~V.6]{athreya}, we find that, for every\/ $a \in \mc A$,
\begin{equation}
\label{eq:GW-convergence}
\lim_{n \rightarrow \infty} \frac{\Phi(\vartheta^n(a))}{\lambda^n} 
\, = \, R W_a 
\end{equation}
holds almost surely, where\/ $W_a$ is a non-negative random variable with\/ $\mathbb{E}[W_a] = L_a$ and\/ $\mathbb{P}[W_a = 0] = 0$. Note that\/ $( \Phi_{\ell}(\vartheta^n_{\ell}(a)))_{n \in \mathbb{N}}$ for\/ $\ell \geqslant 2$ does \emph{not} form a multitype GW process, because neighbouring words have some overlap which is necessarily mapped to the same word under\/ $\vartheta$. This violates the requirement for a GW process that each individual produces offspring independently. However, combining Proposition~\ref{prop:as-word-frequencies} with Eq.~\eqref{eq:GW-convergence} we obtain that for any\/ $a \in \mc A$ and\/ $v \in \mc L_{\vartheta}^{\ell}$,
\[
\lim_{n \to \infty} \frac{|\vartheta^n(a)|_v}{\lambda^n}
\, = \, \lim_{n \to \infty} \frac{|\vartheta^n(a)|_v}{|\vartheta^n(a)|} \frac{|\vartheta^n(a)|}{\lambda^n} 
\, = \, R_v^{(\ell)} W_a,
\]
holds almost surely. Note that, despite our preceding words of precaution, this result (and thereby Proposition~\ref{prop:as-word-frequencies}) can also be obtained by generalising ideas from the theory of GW processes  \cite[Thm.~2]{peyriere}. 
\end{remark}

\section{Ergodic frequency measures on the hull}
\label{sec:main-result}

From the word frequencies in inflation words, as established in the last section, we want to define a shift-invariant measure on the hull $\mathbb{X}_{\vartheta}$ of the substitution in a consistent way. We obtain this by defining the measure first on a convenient class of subsets.
A cylinder set on $\mathbb{X}_{\vartheta}$ is of the form $\mathcal{Z}_{[k,m]}(v) = \{ x \in \mathbb{X}_{\vartheta} \Mid x_{[k,m]} = v \}$, for some $k\leqslant m \in \mathbb{Z}$ and $v \in \mc L_{\vartheta}^{m-k+1}$. We call $\mc B_{\vartheta}$ the Borel sigma-algebra with respect to the topology generated by the cylinder sets.

\begin{remark}
\label{rem:semi-algebra}
It is worth noticing that\/ $\mc B_{\vartheta}$ is already generated by the smaller class of those cylinder sets that specify the position\/ $0$:
\[
\mathfrak{Z}_{0}(\mathbb{X}_{\vartheta}) = \{ \mathcal{Z}_{[k,m]}(v) \Mid k\leqslant 0 \leqslant m, v \in \mc L_{\vartheta}^{m-k+1} \} \cup \{ \mathbb{X}_{\vartheta}, \varnothing \}.
\]
The class\/ $\mathfrak{Z}_{0}(\mathbb{X}_{\vartheta})$ has the convenient property of forming a semi-algebra on\/ $\mathbb{X}$ (it contains the full space, is stable under intersections, and complements can be written as finite disjoint unions).
\end{remark}

\begin{definition}
A map\/ $\mu: \mathfrak{S} \rightarrow [0,\infty]$ on a semi-algebra\/ $\mathfrak{S}$ is called a \emph{measure} on\/ $\mathfrak{S}$, if the following conditions hold:
\begin{enumerate}
\setlength\itemsep{0.5em}
\item If\/ $\dot{\bigcup}_{i=1}^{\infty} A_i \in \mathfrak{S}$ is a  disjoint union of elements\/ $A_i \in \mathfrak{S}$, then\/ $\mu(\dot{\bigcup}_{i=1}^{\infty} A_i) = \sum_{i=1}^{\infty} \mu(A_i)$,
\item $\mu(\varnothing) = 0$.
\end{enumerate}
\end{definition}

Often, this is also called a premeasure in the literature. We know from \cite[Cor. 2.4.9]{partha} and \cite[Prop. 2.5.1]{partha} that this can be extended uniquely to a measure on the $\sigma$-algebra generated by $\mathfrak{S}$ if $\mu$ is $\sigma$-finite on  $\mathfrak{S}$. Comparing this with Remark~\ref{rem:semi-algebra}, we find that it is enough to specify a measure on $\mathfrak{Z}_0(\mathbb{X}_{\vartheta})$ in order to uniquely define a measure on $(\mathbb{X}_{\vartheta}, \mathcal{B}_{\vartheta})$. 

\begin{prop}
\label{prop:mu-on-semi-algebra}
Let\/ $\mu_{\vartheta}:\mathfrak{Z}_0(\mathbb{X}_{\vartheta}) \rightarrow [0,1] $ be defined via\/ $\mu_{\vartheta}(\varnothing)=0$, $\mu_{\vartheta}(\mathbb{X}_{\vartheta}) =1 $ and\/ $\mu_{\vartheta} (\mathcal{Z}_{[k,m]}(v)) = R^{(\ell)}_v$, for any\/ $k\leqslant 0 \leqslant m \in \mathbb{Z}$ and\/ $v \in \mathcal{L}^{\ell}_{\vartheta}$ with\/ $|v| = m-k+1$.  Here,\/ $R^{(\ell)}$ denotes the right PF-eigenvector of the induced substitution\/ $\vartheta_{\ell}$. Then,\/ $\mu_{\vartheta}$ defines a $\sigma$-finite measure on\/ $\mathfrak{Z}_0(\mathbb{X}_{\vartheta})$ and thereby also on\/ $(\mathbb{X}_{\vartheta}, \mathcal{B}_{\vartheta})$.
\end{prop}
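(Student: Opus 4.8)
The plan is to check that $\mu_{\vartheta}$ satisfies the two axioms of a measure on the semi-algebra $\mathfrak{Z}_{0}(\mathbb{X}_{\vartheta})$ and then to invoke the extension result \cite[Cor.~2.4.9, Prop.~2.5.1]{partha}. The axiom $\mu_{\vartheta}(\varnothing)=0$ is built into the definition, and $\sigma$-finiteness is immediate since $\mu_{\vartheta}(\mathbb{X}_{\vartheta})=1$. The only real point is countable additivity, and here I would first reduce to \emph{finite} additivity: $\mathbb{X}_{\vartheta}$ is a closed, hence compact, subset of $\mc A^{\Z}$, and every element of $\mathfrak{Z}_{0}(\mathbb{X}_{\vartheta})$ is clopen, hence compact; so if $A=\dot{\bigcup}_{i\geqslant 1}A_{i}$ with $A,A_{i}\in\mathfrak{Z}_{0}(\mathbb{X}_{\vartheta})$, the nonempty $A_{i}$ form a pairwise disjoint open cover of the compact set $A$, forcing all but finitely many of them to be empty. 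Thus it suffices to prove that $\mu_{\vartheta}$ is finitely additive on $\mathfrak{Z}_{0}(\mathbb{X}_{\vartheta})$; this will also yield well-definedness of $\mu_{\vartheta}$, since two representations of the same cylinder are compared through the same common-refinement argument used below.

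The engine for finite additivity is a set of consistency relations for the PF-eigenvectors of the induced substitutions. First, $\sum_{w\in\mc L_{\vartheta}^{\ell}}R^{(\ell)}_{w}=\norm{R^{(\ell)}}_{1}=1$ by the chosen normalisation and the (strict) positivity of $R^{(\ell)}$, which holds because $M_{\vartheta_{\ell}}$ is primitive, cf.\ \cite{rs}. Second, for every $\ell\in\N$ and $v\in\mc L_{\vartheta}^{\ell}$,
\[
R^{(\ell)}_{v}\,=\,\sum_{\substack{a\in\mc A\\ va\in\mc L_{\vartheta}^{\ell+1}}}R^{(\ell+1)}_{va}\,=\,\sum_{\substack{a\in\mc A\\ av\in\mc L_{\vartheta}^{\ell+1}}}R^{(\ell+1)}_{av}.
\]
I would prove these by a counting argument based on Proposition~\ref{prop:as-word-frequencies} (the in-expectation version Proposition~\ref{thm-expec-conv} would serve as well): fixing $a\in\mc A$, every occurrence of $v$ in $\vartheta^{n}(a)$ is followed by a letter except at most the one flush with the right end, so $|\vartheta^{n}(a)|_{v}=\sum_{b}|\vartheta^{n}(a)|_{vb}+e_{n}$ with $e_{n}\in\{0,1\}$, and symmetrically for left extensions; dividing by $|\vartheta^{n}(a)|\to\infty$ and letting $n\to\infty$ gives the relations.

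Iterating the one-step relations over a window now yields the key identity: for intervals $[k,m]\subseteq[K,M]$, both containing $0$, and $v\in\mc L_{\vartheta}^{m-k+1}$,
\[
R^{(m-k+1)}_{v}\,=\,\sum_{w}R^{(M-K+1)}_{w},
\]
the sum over all $w\in\mc L_{\vartheta}^{M-K+1}$ with $w_{[k-K+1,\,m-K+1]}=v$, equivalently over all such $w$ with $\mathcal{Z}_{[K,M]}(w)\subseteq\mathcal{Z}_{[k,m]}(v)$. Given a finite disjoint decomposition $\mathcal{Z}_{[k,m]}(v)=\dot{\bigcup}_{i=1}^{r}\mathcal{Z}_{[k_{i},m_{i}]}(v^{(i)})$ in $\mathfrak{Z}_{0}(\mathbb{X}_{\vartheta})$ (the cases involving $\mathbb{X}_{\vartheta}$ or $\varnothing$ being trivial), pick $[K,M]$ containing every interval in sight; the displayed identity rewrites each $\mu_{\vartheta}(\mathcal{Z}_{[k_{i},m_{i}]}(v^{(i)}))$ as the sum of $R^{(M-K+1)}_{w}$ over the finer cylinders $\mathcal{Z}_{[K,M]}(w)$ it contains, and disjointness together with the covering property say exactly that these index sets partition $\{w\in\mc L_{\vartheta}^{M-K+1}:\mathcal{Z}_{[K,M]}(w)\subseteq\mathcal{Z}_{[k,m]}(v)\}$. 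Summing gives $\sum_{i}\mu_{\vartheta}(\mathcal{Z}_{[k_{i},m_{i}]}(v^{(i)}))=R^{(m-k+1)}_{v}=\mu_{\vartheta}(\mathcal{Z}_{[k,m]}(v))$. Here one uses that, by primitivity, $\mc L_{\vartheta}$ coincides with the language of $\mathbb{X}_{\vartheta}$ (cf.\ \cite{rs}), so $\mathcal{Z}_{[K,M]}(w)\neq\varnothing$ precisely for $w\in\mc L_{\vartheta}$, keeping $\mu_{\vartheta}$ consistent with $\mu_{\vartheta}(\varnothing)=0$. Combined with the compactness reduction this gives countable additivity, and $\sigma$-finiteness then lets \cite[Cor.~2.4.9, Prop.~2.5.1]{partha} extend $\mu_{\vartheta}$ uniquely to $(\mathbb{X}_{\vartheta},\mc B_{\vartheta})$. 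I expect the main obstacle to be the counting step establishing the one-step relations --- keeping the boundary corrections under control in the limit, a mild echo of the overlap phenomenon handled in Lemma~\ref{lemma:word-freq-prep} --- together with the somewhat fussy bookkeeping involved in iterating those relations over an arbitrary window.
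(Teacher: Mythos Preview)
Your proposal is correct and follows essentially the same line as the paper: compactness reduces $\sigma$-additivity to finite additivity, a frequency-counting argument gives the consistency relations among the $R^{(\ell)}$, and a common refinement into cylinders over a large window $[-n,n]$ yields finite additivity. The only cosmetic difference is that the paper proves the consistency identity $R^{(\ell_0)}_v=\sum_{u\in\mc G^{\ell}_k(v)}R^{(\ell)}_u$ directly for arbitrary $\ell\geqslant\ell_0$ and position $k$ in a single lemma (Lemma~\ref{lemma-freq-consistency}), rather than establishing one-step left/right extension relations and iterating; the underlying counting argument via Proposition~\ref{prop:as-word-frequencies} is the same in both cases.
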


As a first step, we establish a consistency relation for the word frequencies which ensures the finite additivity of $\mu_{\vartheta}$.

\begin{lemma}\label{lemma-freq-consistency} 
Suppose\/ $v = v_1 \cdots v_{\ell_0} \in \mathcal{L}_{\vartheta}$, and fix an arbitrary\/ $\ell \in \mathbb{N}$, with\/ $\ell \geqslant \ell_0$ and a position\/ $k \in \{1, \ldots, \ell - \ell_0+1 \}$. If\/ $\mathcal{G}^{\ell}_{k}(v) = \{ u \in \mathcal{L}^{\ell}_{\vartheta} \Mid u_{[k,k+\ell_0-1}] = v \}$, then\/ \[R^{(\ell_0)}_v = \sum_{u \in \mathcal{G}^{\ell}_k(v)} R^{(\ell)}_u.\]
\end{lemma}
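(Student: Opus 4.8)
The plan is to deduce the consistency relation from the almost-sure existence of word frequencies established in Proposition~\ref{prop:as-word-frequencies}, combined with an elementary counting identity relating occurrences of the short word $v$ to occurrences of the longer words $u \in \mathcal{G}^{\ell}_k(v)$. First I would fix a letter $a \in \mathcal{A}$ and consider the random words $\vartheta^n(a)$ for large $n$. The key combinatorial observation is that, for a long word $w$, each position where $v$ occurs as $w_{[i+k,\,i+k+\ell_0-1]}$ arises from exactly one position $i$ at which some $u \in \mathcal{G}^{\ell}_k(v)$ occurs as a length-$\ell$ subword $w_{[i+1,i+\ell]}$, \emph{provided} that position is not too close to either end of $w$; conversely each occurrence of a $u \in \mathcal{G}^{\ell}_k(v)$ produces exactly one such occurrence of $v$. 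This gives the pointwise identity
\[
\abs{w}_v \, = \, \sum_{u \in \mathcal{G}^{\ell}_k(v)} \abs{w}_u + \mathcal{O}(\ell),
\]
where the boundary error term is bounded by a constant depending only on $\ell$ (and not on $\abs{w}$), coming from the at most $\ell-1$ positions near each end of $w$ where a short occurrence of $v$ need not extend to a full length-$\ell$ window inside $w$.

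Applying this with $w = \vartheta^n(a)$ and dividing by $\abs{\vartheta^n(a)}$, I obtain
\[
\nu_v(\vartheta^n(a)) \, = \, \sum_{u \in \mathcal{G}^{\ell}_k(v)} \nu_u(\vartheta^n(a)) + \frac{\mathcal{O}(\ell)}{\abs{\vartheta^n(a)}}.
\]
Since $\vartheta$ is expanding and primitive, Proposition~\ref{length-as-infty} gives $\abs{\vartheta^n(a)} \to \infty$ almost surely, so the error term vanishes almost surely as $n \to \infty$. Now I invoke Proposition~\ref{prop:as-word-frequencies}: almost surely, $\nu_v(\vartheta^n(a)) \to R^{(\ell_0)}_v$ and, for each of the finitely many $u \in \mathcal{G}^{\ell}_k(v) \subset \mathcal{L}^{\ell}_{\vartheta}$, $\nu_u(\vartheta^n(a)) \to R^{(\ell)}_u$. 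Intersecting these finitely many full-measure events, on the resulting full-measure set the displayed identity passes to the limit and yields $R^{(\ell_0)}_v = \sum_{u \in \mathcal{G}^{\ell}_k(v)} R^{(\ell)}_u$, which is a deterministic statement and hence holds outright.

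One point requiring a little care is that $v$ must actually be legal for the frequencies $R^{(\ell_0)}_v$ to be the ones produced by Proposition~\ref{prop:as-word-frequencies}; this is given in the hypothesis $v \in \mathcal{L}_{\vartheta}$, and every $u \in \mathcal{G}^{\ell}_k(v)$ is legal by definition of $\mathcal{G}^{\ell}_k(v) \subseteq \mathcal{L}^{\ell}_{\vartheta}$. A second, more mundane point is bookkeeping the overlap/boundary correction precisely: one should check that the correspondence between occurrences of $v$ in the interior of $w$ and occurrences of elements of $\mathcal{G}^{\ell}_k(v)$ is genuinely a bijection (each interior occurrence of $v$ at relative position $k$ inside a unique window, and distinct windows give distinct $u$'s being counted with the right multiplicity), so that no double counting occurs. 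I expect the main (though still minor) obstacle to be this careful treatment of the finite-size boundary terms — everything else is a direct application of the almost-sure convergence already proved, together with the fact that $\mathcal{G}^{\ell}_k(v)$ is finite so that only finitely many null sets need to be discarded.
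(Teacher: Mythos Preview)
Your proposal is correct and follows essentially the same approach as the paper: fix a realisation on which all the relevant word frequencies converge (the paper does this by choosing a single $\omega$, you by intersecting finitely many full-measure events), establish the elementary counting identity $|w|_v = \sum_{u \in \mathcal{G}^{\ell}_k(v)} |w|_u + \mathcal{O}(\ell)$, divide by $|\vartheta^n(a)|$ and pass to the limit. The paper records the boundary error slightly more sharply as $\mathcal{O}(\ell - \ell_0)$, but otherwise the arguments are the same.
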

\begin{proof}
Fix $\omega \in \Omega$ and $a \in \mathcal{A}$ such that $\nu_u(\vartheta^n(a)(\omega))$ converges to $R^{(\abs{u})}_u$ as $n \to \infty$, for all $u \in \mathcal{L}_{\vartheta}$. Now, 
\[
\abs{\vartheta^n(a)(\omega)}_v \, = \, \operatorname{card} \, \bigl\{ j \in \{1, \ldots, \abs{\vartheta^n(a)(\omega)} - \ell_0 +1 \} \Mid \vartheta^n(a)(\omega)_{[j,j+\ell_0 -1]} = v \bigr\}.
\] 
Note that, for $j \geqslant k $, $\vartheta^n(a)(\omega)_{[j,j+\ell_0 -1]} = v$ if and only if $\vartheta^n(a)(\omega)_{[j-k+1,j -k +\ell ]} = u$ for some $u \in \mathcal{G}_k^{\ell}(v)$, as long as $j -k + \ell \leqslant \abs{\vartheta^n(a)(\omega)}$. That is, if we count the cardinality of the corresponding set, we miss at most $\ell - \ell_0$ subwords of type $v$ in $\vartheta^n(a)(\omega)$. More precisely, with $i = j-k$,
\begin{align*}
\abs{\vartheta^n(a)(\omega)}_v &\, = \, \operatorname{card} \mathop{\dot{\bigcup}}_{u \in \mathcal{G}^{\ell}_k(v)} \{ i \in \{0, \ldots,\abs{\vartheta^n(a)(\omega)} - \ell \} \Mid  \vartheta^n(a)(\omega)_{[i+1,i+\ell ]} = u  \} + \mathcal{O}(\ell - \ell_0)
\\ & \, = \, \sum_{u \in \mathcal{G}^{\ell}_k(v)} \abs{\vartheta^n(a)(\omega)}_u + \mathcal{O}(\ell - \ell_0). 
\end{align*}
Dividing both sides by $\abs{\vartheta^n(a)(\omega)}$ and taking the limit $n \rightarrow \infty$ yield the desired relation.
\end{proof}

\begin{proof}[Proof of Proposition~\textnormal{\ref{prop:mu-on-semi-algebra}}]
First, note that every $\mathcal{Z} \in \mathfrak{Z}_0(\mathbb{X}_{\vartheta})$ is compact as a closed subset of a compact space. Hence, every countable disjoint union $\dot{\bigcup}_{i=1}^{\infty} \mc Z_i = \mathcal{Z} \in \mathfrak{Z}_0(\mathbb{X}_{\vartheta})$ is in fact finite, that is  $\dot{\bigcup}_{i=1}^{\infty} \mc Z_i = \dot{\bigcup}_{j\in I} \mc Z_{j}$ for some finite index set $I$. It thus suffices to show that $\mu_{\vartheta}$ is finitely additive on $\mathfrak{Z}_0(\mathbb{X}_{\vartheta})$.

We can rewrite any cylinder set $ \mc Z_{[j,m]}(v)$ as a union of cylinder sets that specify a larger set of positions
\begin{equation}\label{eq-cylinder-expansion}
\mc Z_{[j,m]}(v) \, = \, \mathop{\dot{\bigcup}}_{u \in \mathcal{G}_k^{2n+1}(v)} \mathcal{Z}_{[-n,n]}(u),
\end{equation}
for any $n \geqslant \max \{ \abs{j},\abs{m} \}$ and $k = j+n+1 $. Lemma~\ref{lemma-freq-consistency} yields
\begin{equation}\label{eq-add-on-cylinder-exp}
\mu_{\vartheta} (\mathcal{Z}_{[j,m]}(v)) \, = \, R^{(m-j+1)}_v 
\, = \, \sum_{u \in \mathcal{G}^{2n+1}_k(v)} R^{(2n+1)}_u 
\, = \, \sum_{u \in \mathcal{G}_k^{2n+1}} \mu_{\vartheta}(\mathcal{Z}_{[-n,n]}(u)),
\end{equation}
providing the additivity of $\mu_{\vartheta}$ on disjoint unions of the form in Eq.~\eqref{eq-cylinder-expansion}. The same kind of relation holds for $\mc Z = \mathbb{X}_{\vartheta}$ if we replace $\mc G_k^{2n+1}(v)$ by $\mc L_{\vartheta}^{2n+1}$.

The finite additivity of $\mu_{\vartheta}$ on disjoint unions of cylinder sets of the form  $\dot{\bigcup}_{j\in I} \mc Z_{j} = \mc Z$ follows in a straightforward manner. Just consider a common refinement into cylinder sets of the form $\mc Z_{[-n,n]}(u)$ for large enough $n \in \mathbb{N}$. Hence, $\mu_{\vartheta}$ is a ($\sigma$-)finite measure on $\mathfrak{Z}_0(\mathbb{X}_{\vartheta})$.
\end{proof}

\begin{remark}
It is clear by definition that\/ $\mu_{\vartheta}$ is shift-invariant on the cylinder sets of the semi-algebra\/ $\mathfrak{Z}_0(\mathbb{X}_{\vartheta})$. This also implies that\/ $\mu_{\vartheta}$ is shift-invariant as a measure on\/ $(\mathbb{X}_{\vartheta}, \mathcal{B}_{\vartheta})$ --- compare \cite[Thm.~1.1]{walters}. We are therefore left with a shift invariant measure space\/ $(\mathbb{X}_{\vartheta}, \mathcal{B}_{\vartheta}, \mu_{\vartheta})$. 
\end{remark}

As an important step towards ergodicity, we give a sufficient condition for the almost sure convergence of word frequencies with respect to the measure $\mu_{\vartheta}$. For the following, we fix an arbitrary averaging sequence of intervals $I_n = [a_n,b_n]$ with $a_n \leqslant b_n$ and $a_n, b_n \in \mathbb{Z}$ for all $n\in \mathbb{N}$ such that the length of the intervals $d_n = b_n - a_n +1$ is strictly monotonously increasing.

\begin{lemma}
Let\/ $(\mathbb{X}_{\vartheta}, \mathcal{B}_{\vartheta}, \mu_{\vartheta})$ be the measure space introduced above. For a given word\/ $v \in \mathcal{L}^{\ell}_{\vartheta}$ and\/ $\varepsilon > 0$, let\/ $C^{n}_{\varepsilon} := \{ u \in \mathcal{L}^{n}_{\vartheta} \Mid \abs{\nu_v(u) - R^{(\ell)}_v } > \varepsilon \}$. Suppose that, for every\/ $\varepsilon > 0$,
\begin{equation}\label{eq-summability}
\sum_{n=1}^{\infty} \sum_{u \in C_{\varepsilon}^{n}} R^{(n)}_u \, <  \, \infty.
\end{equation}
Then, for almost every element\/ $y \in \mathbb{X}_{\vartheta}$, the frequency of\/ $v$ in\/ $y$ with respect to the averaging sequence $\{[a_n,b_n]\}_{n \in \mathbb{N}}$ is well-defined and given by\/ $R^{(\ell)}_v$. That is, there exists a set of full measure\/ $A^v \subseteq \mathbb{X}_{\vartheta}$ such that, for any\/ $y \in A^v$, we have
\begin{equation}\label{eq-freq-in-hull}
\lim_{n \rightarrow \infty} \nu_v \bigl(y^{}_{[a_n,b_n]}\bigr)
\, = \, R^{(\ell)}_v.
\end{equation}
\end{lemma}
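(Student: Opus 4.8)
The plan is to run a Borel--Cantelli argument directly on the measure space $(\mathbb{X}_{\vartheta}, \mathcal{B}_{\vartheta}, \mu_{\vartheta})$. Fixing $\varepsilon > 0$ and writing $d_n = b_n - a_n + 1$ for the length of the $n$-th averaging interval, I would introduce the bad sets
\[
B^{\varepsilon}_n \, := \, \{ y \in \mathbb{X}_{\vartheta} \Mid | \nu_v(y_{[a_n,b_n]}) - R^{(\ell)}_v | > \varepsilon \} \, = \, \mathop{\dot{\bigcup}}_{u \in C^{d_n}_{\varepsilon}} \mathcal{Z}_{[a_n,b_n]}(u),
\]
each of which is a finite disjoint union of cylinder sets (since $\mathcal{A}$, and hence $\mathcal{L}^{d_n}_{\vartheta}$, is finite) and therefore lies in $\mathcal{B}_{\vartheta}$.

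The first substantive step is to evaluate $\mu_{\vartheta}(B^{\varepsilon}_n)$. Here I would use shift-invariance of $\mu_{\vartheta}$ (noted after Proposition~\ref{prop:mu-on-semi-algebra}): since $\mathcal{Z}_{[a_n,b_n]}(u) = S^{-a_n}\mathcal{Z}_{[0,d_n-1]}(u)$, one gets $\mu_{\vartheta}(\mathcal{Z}_{[a_n,b_n]}(u)) = \mu_{\vartheta}(\mathcal{Z}_{[0,d_n-1]}(u)) = R^{(d_n)}_u$ by the defining formula of $\mu_{\vartheta}$. Adding up over the disjoint union then gives $\mu_{\vartheta}(B^{\varepsilon}_n) = \sum_{u \in C^{d_n}_{\varepsilon}} R^{(d_n)}_u$. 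Because the sequence $(d_n)_{n \in \mathbb{N}}$ is strictly increasing, each natural number occurs at most once among the $d_n$, so the hypothesis \eqref{eq-summability} yields
\[
\sum_{n=1}^{\infty} \mu_{\vartheta}(B^{\varepsilon}_n) \, = \, \sum_{n=1}^{\infty} \sum_{u \in C^{d_n}_{\varepsilon}} R^{(d_n)}_u \, \leqslant \, \sum_{m=1}^{\infty} \sum_{u \in C^{m}_{\varepsilon}} R^{(m)}_u \, < \, \infty .
\]

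From there the argument is routine. Borel--Cantelli gives $\mu_{\vartheta}(\limsup_n B^{\varepsilon}_n) = 0$, so there is a full-measure set $A^v_{\varepsilon}$ on which $y$ lies in only finitely many $B^{\varepsilon}_n$; that is, $| \nu_v(y_{[a_n,b_n]}) - R^{(\ell)}_v | \leqslant \varepsilon$ for all large $n$, hence $\limsup_{n \to \infty} | \nu_v(y_{[a_n,b_n]}) - R^{(\ell)}_v | \leqslant \varepsilon$. Taking $\varepsilon = 1/k$ and setting $A^v := \bigcap_{k \in \mathbb{N}} A^v_{1/k}$ (still of full measure as a countable intersection) forces this $\limsup$ to be $0$ on $A^v$, which is exactly \eqref{eq-freq-in-hull}; note that $d_n \to \infty$ by strict monotonicity, so the conclusion is genuinely a statement about the frequency along the averaging sequence. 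I do not expect a real obstacle here: the genuine content has been front-loaded into the hypothesis \eqref{eq-summability}. The only two points needing a bit of care are that $\mu_{\vartheta}(\mathcal{Z}_{[a_n,b_n]}(u))$ must be obtained via shift-invariance, since the window $[a_n,b_n]$ need not contain the origin used to define $\mu_{\vartheta}$ on $\mathfrak{Z}_0(\mathbb{X}_{\vartheta})$, and the interchange of ``for each fixed $\varepsilon$'' with ``in the limit'', which is the standard diagonal trick over $\varepsilon = 1/k$.
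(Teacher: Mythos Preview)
Your proposal is correct and follows essentially the same approach as the paper: both compute $\mu_{\vartheta}$ of the ``bad'' event $\{|\nu_v(y_{[a_n,b_n]}) - R^{(\ell)}_v| > \varepsilon\}$ as a disjoint union of cylinders, identify this with $\sum_{u \in C^{d_n}_{\varepsilon}} R^{(d_n)}_u$, use strict monotonicity of $d_n$ to embed into the series \eqref{eq-summability}, and invoke Borel--Cantelli. You are in fact slightly more careful than the paper on two points it leaves implicit: the use of shift-invariance to evaluate $\mu_{\vartheta}(\mathcal{Z}_{[a_n,b_n]}(u))$ when $[a_n,b_n]$ need not contain $0$, and the passage from ``for each fixed $\varepsilon$'' to the limit via the countable intersection over $\varepsilon = 1/k$.
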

\begin{proof}
Consider the random variables
\[
X_n:\mathbb{X}_{\vartheta}\to \R,\quad\ y\mapsto \nu_v \bigl(y^{}_{[a_n,b_n]}\bigr) = \frac{|y_{[a_n,b_n]}|_v}{d_n}, 
\]
for all $n\in\N$, and the constant random variable $X:\mathbb{X}_{\vartheta}\to \R$, $y\mapsto R_v^{(\ell)}$. It is a well-known fact (by an application of the Borel--Cantelli lemma) that $(X_n)_{n\in\N}$ converges $\mu_{\vartheta}$-almost surely to $X$ if
\[
\sum_{n=1}^{\infty} \mu_{\vartheta}\big(\{y\in\mathbb{X}_{\vartheta}\, |\, |X_n(y)-X(y)|>\varepsilon\}\big) \, < \, \infty,
\]
for every $\varepsilon>0$. Therefore, our claim follows from
\[
\begin{split}
\mu_{\vartheta}\big(\{y\in\mathbb{X}_{\vartheta}\, |\, |X_n(y)-X(y)|>\varepsilon\}\big) 
   & \, = \, \mu_{\vartheta}\big(\{y\in\mathbb{X}_{\vartheta}\, |\, y_{[a_n,b_n]}\in 
      C_{\varepsilon}^{d_n}\}\big)       \\
   & \, = \, \mu_{\vartheta}\big(\{y\in\mathbb{X}_{\vartheta}\, |\, y\in \dot{\bigcup}_{u\in
      C_{\varepsilon}^{d_n}} \mathcal{Z}_{[a_n,b_n]}(u) \}\big) \\
   & \, = \, \mu_{\vartheta}\biggl(\dot{\bigcup}_{u\in C_{\varepsilon}^{d_n}} 
        \mathcal{Z}_{[a_n,b_n]}(u) \biggr) = \sum_{u\in C_{\varepsilon}^{d_n}} \mu_{\vartheta} 
         \big(\mathcal{Z}_{[a_n,b_n]}(u)\big)                  \\
   & \, = \, \sum_{u\in C_{\varepsilon}^{d_n}} R_u^{(d_n)}
    \, \leqslant \, \sum_{u \in C_{\varepsilon}^{n}} R_u^{(n)},
\end{split}
\]
where the last step is a consequence of the assumption that $(d_n)_{n \in \mathbb{N}}$ is a strictly increasing sequence in $\mathbb{N}$.
\end{proof}

\begin{remark}
The proof of the ergodicity result for a family of random noble means subsitutions in \cite{moll} follows a somewhat different approach.   There, for a given $\ell \in \mathbb{N}^{}_0$ and $k,m \in \mathbb{Z}$, the subwords $X_{[k,k+\ell]}$ and $X_{[m,m+\ell]}$ of a $\mu_{\vartheta}$-distributed random word $X$ are interpreted as i.i.d.\ random words, provided that $k$ and $m$ are separated by a certain distance. The small gap in the proof, mentioned in the introduction, amounts to the fact that independence for the given, fixed distance does not hold and should be replaced by asymptotic independence as the distance approaches infinity. However, the varied correlation structure of different examples of random substitutions precludes using a similar approach in the general case. We therefore speak of i.i.d.\ random words only in the context of the Markov measure $\mathbb{P}$ and use the estimate in Lemma~\ref{lemma:word-freq-prep} to relate this to properties of $\mu_{\vartheta}$.
\end{remark}

Next, we want to show that Eq.~\eqref{eq-summability} indeed holds for any expansive primitive random substitution. The intuition behind this is the following. For a word $u$ to be in $C_{\varepsilon}^n$, it must be exceptional regarding the occurrences of the subword $v$. The sum of the frequencies of such words $u$ can not be too large since otherwise it would contradict the well-defined frequency of $v$ in the typical limit words of the random substitution Markov process. The idea is to exhaust exceptional words $u$ with inflation words and to show that $u \in C_{\varepsilon}^n$ essentially requires a positive fraction of these inflation words to have exceptional relative words frequencies as well. We can then employ the large deviation estimates in Lemma~\ref{lemma:word-freq-prep} to show that these events must have summable probabilities.

\begin{prop} \label{prop:mu-as-existence-of-word-frequencies}
Let\/ $v \in \mathcal{L}^{\ell}_{\vartheta}$ be a $\vartheta$-legal word of length\/ $\ell$ and\/ $\{I_n\}_{n \in \mathbb{N}}$ an averaging sequence of strictly increasing length. Then, for $\mu_{\vartheta}$-almost every element\/ $y \in \mathbb{X}_{\vartheta}$, the frequency of\/ $v$ in\/ $y$ with respect to\/ $\{I_n\}_{n \in \mathbb{N}}$ is well-defined and given by\/ $R^{(\ell)}_v$.
\end{prop}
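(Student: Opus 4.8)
Throughout write $C_\varepsilon^n$ for the set $C^n_\varepsilon$ of the preceding lemma. That lemma reduces the assertion to a single analytic fact: it suffices to verify the summability condition \eqref{eq-summability} for every $\varepsilon>0$. So fix $v\in\mathcal L_\vartheta^\ell$ and $\varepsilon>0$, and for a finite word $w$ put $N_\varepsilon^n(w):=\sum_{u\in C_\varepsilon^n}\abs{w}_u=\card\{\,i : w_{[i+1,i+n]}\in C_\varepsilon^n\,\}$, the number of length-$n$ windows of $w$ whose relative $v$-frequency is more than $\varepsilon$ away from $R_v^{(\ell)}$. Since $C_\varepsilon^n\subseteq\mathcal L_\vartheta^n$ is finite, summing the convergence statement of Proposition~\ref{thm-expec-conv} (used with $n$ in place of $\ell$) over $u\in C_\varepsilon^n$ gives, for any fixed $a\in\mathcal A$,
\[
\sum_{u\in C_\varepsilon^n}R_u^{(n)}\;=\;\lim_{N\to\infty}\frac{\mathbb E\bigl[N_\varepsilon^n(\vartheta^N(a))\bigr]}{\mathbb E\abs{\vartheta^N(a)}}.
\]
It therefore suffices to bound $\mathbb E[N_\varepsilon^n(\vartheta^N(a))]$ by $($a quantity summable in $n)\cdot\mathbb E\abs{\vartheta^N(a)}$, uniformly in large $N$.

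Fix $k\geqslant k_0$, where $k_0$ and the associated constants $C>0,\ n_0\in\N$ are those furnished by Lemma~\ref{lemma:word-freq-prep} applied with $\varepsilon/2$ in place of $\varepsilon$, and let $\mu:=\max_{b\in\mathcal A}\max\{t:\mathbb P[\abs{\vartheta(b)}=t]>0\}$ be the maximal inflation factor, so that $\abs{\vartheta^k(b)}\leqslant\mu^k$ for every $b\in\mathcal A$. Decompose $\vartheta^N(a)=\vartheta^k(\vartheta^{N-k}(a))$ and condition on the intermediate word $z:=\vartheta^{N-k}(a)=z_1\cdots z_M$. By the Markov property and the construction recalled in Section~\ref{sec:rand_subst}, conditionally on $z$ the blocks $B_j:=\vartheta^k(z_j)$ are independent with $B_j$ distributed like $\vartheta^k(z_j)$, they partition the positions of $\vartheta^N(a)=B_1\cdots B_M$, and $\abs{B_j}\leqslant\mu^k$.

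The combinatorial heart of the argument is the following. Fix a position $p$ and let $r_p$ be the least $r$ for which $W_p:=B_p\cdots B_{p+r}$ already contains every length-$n$ window of $\vartheta^N(a)$ that starts inside $B_p$ (if there is no such window, $p$ contributes nothing below). Minimality of $r_p$ together with $\abs{B_j}\leqslant\mu^k$ forces $n\leqslant\abs{W_p}<n+2\mu^k$ and, from the other side, $r_p\geqslant n/\mu^k-1$. Because $\abs{W_p}=n+O(\mu^k)$, a routine estimate of the boundary pieces and of the window-overlap corrections shows that there is $n_1=n_1(k,\varepsilon)$ such that for $n\geqslant n_1$, whenever $W_p$ contains a length-$n$ subword whose $v$-frequency is $\varepsilon$-far from $R_v^{(\ell)}$, then already $\abs{\nu_v(W_p)-R_v^{(\ell)}}>\varepsilon/2$. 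As each block contributes at most $\abs{B_p}\leqslant\mu^k$ window starts,
\[
N_\varepsilon^n(\vartheta^N(a))\;\leqslant\;\mu^k\,\card\bigl\{\,p : \abs{\nu_v(W_p)-R_v^{(\ell)}}>\tfrac{\varepsilon}{2}\,\bigr\}\;\leqslant\;\mu^k\sum_{r\,\geqslant\,n/\mu^k-1}\card\bigl\{\,p : \abs{\nu_v(\vartheta^k(z_{[p,p+r]}))-R_v^{(\ell)}}>\tfrac{\varepsilon}{2}\,\bigr\},
\]
where the range of $r$ is in fact confined to an interval $[\,n/\mu^k-1,\ n+2\mu^k\,]$. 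For each such $r$ the word $z_{[p,p+r]}\in\mathcal L_\vartheta^{r+1}$ has length $r+1\geqslant n/\mu^k\geqslant n_0$ once $n$ is large, and — conditionally on $z$ — the random word $\vartheta^k(z_{[p,p+r]})$ has exactly the distribution to which Lemma~\ref{lemma:word-freq-prep} (with $\varepsilon/2$) applies, so its contribution to the conditional expectation is at most $\me^{-C(r+1)}$. Taking conditional expectations, summing the geometric series in $r$, and using $M\leqslant\abs{\vartheta^N(a)}$ pointwise (hence $\mathbb E[M]\leqslant\mathbb E\abs{\vartheta^N(a)}$), we arrive at $\mathbb E[N_\varepsilon^n(\vartheta^N(a))]\leqslant C'\me^{-C''n}\,\mathbb E\abs{\vartheta^N(a)}$ with $C',C''>0$ independent of $N$. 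Combined with the displayed identity this gives $\sum_{u\in C_\varepsilon^n}R_u^{(n)}\leqslant C'\me^{-C''n}$ for all large $n$; the finitely many remaining terms being at most $\norm{R^{(n)}}_1=1$, condition \eqref{eq-summability} follows, and with it the proposition.

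The genuine obstacle is the combinatorial step: a single bad length-$n$ window must be traced back to a \emph{bad inflation word whose length is controlled} (essentially $n$). If one merely covers the window by the inflation of a fixed number of letters, that inflation may — since the worst-case factor $\mu$ can exceed the typical factor $\lambda$ — be far longer than $n$, and then "containing one bad window'' carries no information. Working instead with the \emph{minimal} covering block $z_{[p,p+r_p]}$ forces $\abs{W_p}=n+O(\mu^k)$ unconditionally, at the price of a random number of letters $r_p$; this is absorbed by the crude union bound over the value of $r_p$, while the lower bound $r_p\geqslant n/\mu^k-1$ keeps the pre-images long enough for the large-deviation estimate of Lemma~\ref{lemma:word-freq-prep} to bite. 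The remaining ingredients — the boundary/overlap bookkeeping behind "$W_p$ contains a bad window $\Rightarrow$ $W_p$ is $(\varepsilon/2)$-bad'', and the conditional-independence setup for the inflated blocks — are routine.
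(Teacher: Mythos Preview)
Your argument is correct and shares the same core mechanism as the paper's proof: both cover a ``bad'' length-$n$ window by the minimal stretch of level-$k$ inflation words containing it, observe that this stretch has length $n+O(\mu^k)$ so that badness of the window implies $(\varepsilon/2)$-badness of the covering inflation word, and then invoke the large-deviation bound of Lemma~\ref{lemma:word-freq-prep}. The difference lies in how the target quantity $\sum_{u\in C_\varepsilon^n} R_u^{(n)}$ is accessed. The paper uses the eigenvalue relation $R^{(n)}=\lambda^{-k}M_{\vartheta_n}^k R^{(n)}$ to rewrite this sum as $\sum_{u\in\mathcal L_\vartheta^n}R_u^{(n)}\,Q_n^k(u,C_\varepsilon^n)$ with $Q_n^k(u,C_\varepsilon^n)=\lambda^{-k}\sum_{j\leqslant\mu^k}\mathbb P[\vartheta^k(u)_{[j,j+n-1]}\in C_\varepsilon^n\wedge|\vartheta^k(u_1)|\geqslant j]$, and then bounds this kernel uniformly in $u$; no auxiliary limit is needed. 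You instead represent the sum as $\lim_{N\to\infty}\mathbb E[N_\varepsilon^n(\vartheta^N(a))]/\mathbb E|\vartheta^N(a)|$ via Proposition~\ref{thm-expec-conv}, condition on the intermediate word $z=\vartheta^{N-k}(a)$, and bound the ratio uniformly in $N$. Your route is slightly more elementary in that it bypasses the induced-substitution matrix and the eigenvalue identity, at the cost of carrying the limit $N\to\infty$ and a union bound over the random length $r_p$ (the paper's analogue is a sum over the starting offset $j\leqslant\mu^k$). Both decompositions lead to the same exponential estimate $\sum_{u\in C_\varepsilon^n}R_u^{(n)}\leqslant C'\me^{-C''n}$ and hence to \eqref{eq-summability}.
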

\begin{proof}
It suffices to show that Eq.~\eqref{eq-summability} indeed holds for all $\varepsilon >0$. 
Let $v \in \mathcal{L}^{\ell}_{\vartheta}$ be fixed in the following. Because of Proposition~\ref{1_prop_diagram}, it is for $x,u \in \mathcal{L}^{n}_{\vartheta}$ and $k\in\N$: $(M_{\vartheta_{n}}^k)_{xu} = \mathbb{E} \Phi_n (\vartheta^k_{n}(u))_x$. Hence, we can rewrite the frequency of $x \in C_{\varepsilon}^{n}$ as 
\[
 R_x^{(n)} \, = \, \frac{1}{\lambda^k} \left(M^k_{\vartheta_{n}} R^{(n)} \right)_x \, = \, \frac{1}{\lambda^k} \sum_{u \in \mathcal{L}^{n}_{\vartheta}} \mathbb{E} \Phi_n (\vartheta^k_{n}(u))_x \, R_u^{(n)} 
  \, = \, \sum_{u \in \mathcal{L}^n_{\vartheta}}  R_u^{(n)} \underbrace{ \frac{1}{\lambda^k}  \sum_{V \in \mathcal{D}_n} \mathbb{P}[\vartheta_n^k(u) = V] \abs{V}_x}_{=: Q_n^k(u,x)} ,
\]
for any $k \in \mathbb{N}$. Thus, with $Q_n^k(u,C^n_{\varepsilon}) := \sum_{x \in C^n_{\varepsilon}} Q_n^k(u,x)$, it is
\begin{equation}\label{Eq:sum_deviations}
\sum_{x \in C^n_{\varepsilon}} R_x^{(n)} \, = \, \sum_{u \in \mathcal{L}^n_{\vartheta}}  R_u^{(n)} Q_n^k(u,C^n_{\varepsilon}). 
\end{equation}
Given a large enough $k$, our strategy is to find a bound for $Q_n^k(u,C^n_{\varepsilon})$, uniform in $u$, that is small enough to be summable over $n$. Note that 
\begin{align*}
Q_n^k(u,C^n_{\varepsilon}) &\, = \, \frac{1}{\lambda^k}  \sum_{V \in \mathcal{D}_n} \sum_{j=1}^{\abs{V} } \mathbb{P}[\vartheta_n^k(u) = V] \delta_{V^{}_j}(C_{\varepsilon}^n) 
\\ &\, = \, \frac{1}{\lambda^k} \sum_{m=1}^{\infty} \sum_{j=1}^{m} \mathbb{P}[\vartheta^k_n(u)_j \in C_{\varepsilon}^n \wedge \abs{\vartheta^k_n(u)} = m].
\end{align*}
This is still a finite sum, since $\abs{\vartheta_n^k(u)}$ can take only finitely many values. 
We can exchange the order of summation and express this as
\[
Q_n^k(u,C^n_{\varepsilon})\, = \, \frac{1}{\lambda^k} \sum_{j=1}^{\infty} \mathbb{P}[\vartheta^k_n(u)_j \in C_{\varepsilon}^n \wedge \abs{\vartheta^k_n(u)} \geqslant j] .
\]

Now, by definition, $\abs{\vartheta^k_n(u)(\omega)} = \abs{\vartheta^k(u_1)(\omega)} $ and $\vartheta^k_n(u)(\omega)_j = \vartheta^k(u)(\omega)_{[j,j+n-1]}$ for every realisation $\omega$. Thus,
\begin{equation}
\label{eq:Q-term}
Q_n^k(u,C^n_{\varepsilon}) \, = \, \frac{1}{\lambda^k} \sum_{j=1}^{\mu^k} \mathbb{P}[\vartheta^k(u)_{[j,j+n-1]} \in C_{\varepsilon}^n \wedge \abs{\vartheta^k(u_1)} \geqslant j]  ,
\end{equation}
where $\mu = \max_{\omega \in \Omega} \max_{a \in \mc A} \abs{\vartheta(a)(\omega)}$ is an upper bound for the 1-step inflation factor. Note that, at this point, $k \in \mathbb{N}$ is still a free parameter. 

Let us go back to Eq.~\eqref{eq:Q-term}. For the bulk of the remainder, we fix a position $j \in \mathbb{N}_0$ and consider the random variable given by $x^k = \vartheta^k(u)_{[j,j+n-1]}$, under the additional condition that $|\vartheta^k(u_1)| \geqslant j$. This ensures that $x^k$ overlaps the inflation word $\vartheta^k(u_0)$. Note that, in general, $x^k$ depends only on a prefix of $u$, given by $u_{[1,N_k+1]}$, where we take $N_k \in \mathbb{N}_0$ to be minimal with this property. Clearly, $N_k$ depends on the chosen realisation of $x^k$. It is thus also a random variable and we may regard it as a stopping time. By the minimality of $N_k$, we obtain that
\[
\vartheta^k(u_{[2,N_k(\omega)]})(\omega) \, \triangleleft \, x^k(\omega) \, \triangleleft \, \vartheta^k(u_{[1,N_k(\omega)+1]})(\omega)
\] 
holds for every realisation $\omega$. Note that the `interior' of $x^k$ on the left hand side is a concatenation of inflation words
\[
\widetilde x^k \, := \, \vartheta^k(u_{[2,N_k]}) \, = \, \vartheta^k(u_2)\cdot \ldots \cdot \vartheta^k(u_{N_k}),
\]
suppressing the dependence on $\omega$ in our notation. Compare Figure~\ref{Fig:quantities_in_long_proof} for a graphical representation of those quantities.
\begin{figure}
\begin{tikzpicture}
\node[right] at (0,0) {$u_1 u_2 \cdots \cdots u_N u_{N+1} \cdots \cdots \cdots u_n \quad = u$}; 

\draw (0.2, -0.2) -- (0.2,-3); 
\draw (0.6,-0.2) -- (1.7,-3); 
\draw (2.6,-0.2) -- (4.9,-3) (3.4,-0.2) -- (6.45,-3);
\node[right] at (0,-3.5) {$| \cdots \cdots \cdot \cdot | \cdots \cdots\cdots \cdots \cdots \cdots | \cdots \cdots \cdot \cdot | \cdots \cdots \cdots \cdots \cdots \quad = \vartheta^k(u)$}; 
\draw[->] (1.2,-3.1) -- (1.2,-3.3);
\draw (1.2,-3.2) node[above] {$\scriptstyle{j}$};
\draw[->] (5.8,-3.1) -- (5.8,-3.3);
\draw (5.8,-3.2) node[above] {$\scriptstyle{j+n}$};

\draw (1.2, -4) -- node[below]{$x^k$} (5.8 ,-4);
\draw (1.2, -3.9) -- (1.2,-4.1)  (5.8,-3.9) -- (5.8,-4.1);

\draw (1.7, -4.7) -- node[below]{$\widetilde{x}^k$} (4.95 ,-4.7);
\draw (1.7, - 4.6) -- (1.7,-4.8) (4.95 , -4.6) -- (4.95, -4.8);
\end{tikzpicture}
\caption{Illustration of how the random words $x^k$, $N = N_k$ and $\widetilde{x}^k$ are built from $\vartheta^k(u)$, given a fixed length $n \in \mathbb{N}$, a number of inflation steps $k \in \mathbb{N}$ and an initial position $1 \leqslant j \leqslant |\vartheta^k(u_1)|$.}
\label{Fig:quantities_in_long_proof}
\end{figure}
Note that the ratio of $|\widetilde{x}^k|$ and $n$ approaches $1$ as $n \rightarrow \infty$ since $|\vartheta^k(a)| \leqslant \mu^k$ for all $a \in \mc A$ (recall that $\mu$ denotes the maximal length of level-$1$ inflation words) and thus
\[
\frac{n - 2 \mu^k}{n} \, \leqslant \, \frac{n - |\vartheta^k(u_0)| - |\vartheta^k(u_{N_k +1})|}{n} \, \leqslant \, \frac{|\widetilde{x}^k|}{n} \leqslant 1.
\]
Consider the event $x^k \in C_{\varepsilon}^n$ that is equivalent to 
\begin{equation}
\label{eq:frequency-deviation}
\biggl| \frac{\abs{x^k}_v}{n} - R_v^{(\ell)} \biggr|  
\, > \, \varepsilon.
\end{equation}
We wish to express this in terms of the exact inflation word $\widetilde{x}^k$. We can split up the occurrences of $v$ in $x^k$ into those that are contained in $\widetilde{x}^k$, those contained in the overlap of $x^k$ with the boundary inflation words $\vartheta^k(u_1)$ or $\vartheta^k(u_{N_t+1})$ and finally those $v$ that overlap both $\widetilde{x}^k$ and one of the boundary inflation words. 
\begin{figure}
\begin{tikzpicture}
\draw[dotted] (0,0) -- (1,0); 
\draw (1,0) -- (1.3,0) (2,0)-- (4.3,0) (5,0) -- (7.15,0) (7.85,0)--(8,0) ; 
\draw[dotted] (8,0) -- (8.8,0);
\draw (0,-0.1) -- (0,0.1);
\draw (1.5,-0.1) -- (1.5,0.1);
\draw (7,-0.1) -- (7,0.1);
\draw (8.8,-0.1) -- (8.8,0.1);
\draw[decoration = {brace, mirror, raise=0.3cm}, decorate ] (0,0) -- node[anchor=north, yshift=-0.4cm]{$\vartheta^k(u^{}_1)$} (1.5,0);
\draw[decoration = {brace, mirror, raise=0.3cm}, decorate ] (7,0) -- node[anchor=north, yshift=-0.4cm]{$\vartheta^k(u^{}_{N_k+1})$} (8.8,0);
\draw[decoration = {brace, mirror, raise=0.4cm}, decorate ] (1.5,0) -- node[anchor=north, yshift=-0.5cm]{$\widetilde{x}^k$} (7,0);
\draw[decoration = {brace, raise=0.6cm}, decorate ] (1,0) -- node[anchor=south, yshift= 0.7cm]{$x^k$} (8,0);
\draw[line width=0.6mm,red] (1.3,0)--node[above]{$v$}(2.0,0);
\draw[line width=0.6mm,red] (4.3,0)--node[above]{$v$}(5.0,0);
\draw[line width=0.6mm,red] (7.15,0)--node[above]{$v$}(7.85,0);
\end{tikzpicture}
\caption{Possible positions of the word $v$ (highlighted in red) as a subword of $x^k$.}
\label{Fig:v-positions}
\end{figure}
For an illustration of the three qualitatively different positions of $v$ within $x^k$, compare Figure~\ref{Fig:v-positions}. 
Thus,
\begin{equation*}
|\widetilde{x}^k|_v \, \leqslant \, |x^k|_v
\, \leqslant \, |\widetilde{x}^k|_v + \abs{\vartheta^k(u^{}_1)}_v + \abs{\vartheta^k(u^{}_{N_k+1})}_v + 2\ell.
\end{equation*}
The last term originates from the fact that there are two boundaries of $\widetilde{x}^k$ with boundary inflation words each of which can contribute at most $\abs{v} = \ell$ occurrences of $v$ to the sum. For large enough $n$, we therefore conclude that
\[
\left| \frac{|x^k|_v}{n} - \frac{|\widetilde{x}^k|_v}{|\widetilde{x}^k|} \right| 
\, \leqslant \, \left| \frac{|x^k|_v}{n} - \frac{|\widetilde{x}^k|_v}{n} \right| + \left| \frac{|\widetilde{x}^k|_v}{n} - \frac{|\widetilde{x}^k|_v}{|\widetilde{x}^k|}  \right|
\, \leqslant \, \frac{\varepsilon}{2},
\]
such that Eq.~\eqref{eq:frequency-deviation} implies
\begin{equation}
\label{eq:rel-freq-deviation-2}
\left| \nu_v(\vartheta^k(u_{[2,N_k]})) - R_v^{(\ell)} \right|
= \left\vert \frac{|\widetilde{x}^k|_v}{|\widetilde{x}^k|} - R_v^{(\ell)} \right\vert 
\, > \, \frac{\varepsilon}{2}.
\end{equation}
Corresponding to $\frac{\varepsilon}{2}$, let us fix some $k \geqslant k_0$ as in Lemma~\ref{lemma:word-freq-prep}. Note that $N_k$ grows linearly with $n$ because
\[
N_k \, \leqslant \, n \, \leqslant \, |\vartheta^k(u_{[1,N_k+1]})| \, \leqslant \, \mu^k (N_k + 1),
\]
that is, there is some $C_1 > 0$ such that $ C_1 n \leqslant N_k(\omega) \leqslant n$ for large enough $n$, uniformly for all realisations $\omega \in \Omega$. An application of Lemma~\ref{lemma:word-freq-prep} yields
\begin{align*}
\mathbb{P}[\eqref{eq:frequency-deviation}]
& \, \leqslant \, \mathbb{P}[\eqref{eq:rel-freq-deviation-2}] 
\, = \, \sum_{N = C_1 n}^n \mathbb{P} \left[ \left| \nu_v(\vartheta^k(u_{[2,N]})) - R_v^{(\ell)}\right| > \frac{\varepsilon}{2}  \, \wedge \, N_k = N  \right]
\\& \, \leqslant \, \sum_{N = C_1 n}^n \me^{- CN} \, \leqslant \, n \me^{ - C C_1 n} \, \leqslant \, \me^{ - \widetilde{C} n} ,
\end{align*}
for large enough $n$ and appropriate choice of $\widetilde{C}>0$.
Since this bound holds irrespective of the choice of $j$, we have for any $j \in \{1,\ldots, \mu^k\}$,
\[
\mathbb{P}[\vartheta^k(u)_{[j,j+n-1]} \in C_{\varepsilon}^n \wedge \abs{\vartheta^k(u_1)} \geqslant j] 
\, \leqslant \, \mathbb{P}[\eqref{eq:frequency-deviation}]
\, \leqslant \, \me^{- \widetilde{C} n}.
\]
Comparing with Eq.~\eqref{eq:Q-term}, we find
\[
Q_n^k(u,C_{\varepsilon}^n) 
\, \leqslant \, \frac{\mu^k}{\lambda^k} \me^{-\widetilde{C} n}  \, \leqslant \, \me^{-C' n},
\]
for $n$ larger than some $n_0$, by another adjustment of the constant. Since this is summable over $n$, we find, using Eq.~\eqref{Eq:sum_deviations},
\[
\sum_{n=1}^{\infty} \sum_{ x \in C_{\varepsilon}^n} R_x^{(n)} 
\leqslant \sum_{n=1}^{n_0} \sum_{ x \in C_{\varepsilon}^n} R_x^{(n)} + \sum_{n=n_0+1}^{\infty} \underbrace{\sum_{u \in \mc L_{\vartheta}^n} R_u^{(n)}}_{=1} \me^{- C' n} < \infty.
\]
This finishes the proof.
\end{proof}

Finally, we are ready to prove the main result of this paper.

\begin{theorem} \label{thm:MAIN}
Let\/ $\vartheta$ be an expansive, primitive random substitution. Then, the measure-theoretic dynamical system\/ $(\mathbb{X}_{\vartheta}, \mathcal{B}_{\vartheta}, \mu_{\vartheta})$ is ergodic. 
\end{theorem}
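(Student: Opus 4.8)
The plan is to establish ergodicity via the classical criterion for shift-invariant measures on subshifts: it suffices to show that for any two cylinder sets $A = \mathcal{Z}_{[k_1,m_1]}(v)$ and $B = \mathcal{Z}_{[k_2,m_2]}(w)$, the Cesàro averages of $\mu_{\vartheta}(A \cap S^{-j}B)$ converge to $\mu_{\vartheta}(A)\mu_{\vartheta}(B)$ as $N \to \infty$; equivalently, one may use the pointwise ergodic theorem in reverse and argue that the frequency of each legal word $v$ is almost surely constant and equal to $R_v^{(\ell)}$. In fact, Proposition~\ref{prop:mu-as-existence-of-word-frequencies} already gives us exactly this: for every $v \in \mathcal{L}_{\vartheta}^{\ell}$ and every averaging sequence of strictly increasing lengths, the frequency $\nu_v(y_{[a_n,b_n]})$ converges to the constant $R_v^{(\ell)}$ for $\mu_{\vartheta}$-almost every $y$. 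So the bulk of the work is already done, and what remains is to package it into a genuine ergodicity statement.

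First I would recall that for a shift-invariant probability measure, ergodicity is equivalent to the statement that every shift-invariant $L^2$ function is constant almost everywhere, and that it suffices to verify the mixing-type averaged identity on a generating semi-algebra — here $\mathfrak{Z}_0(\mathbb{X}_{\vartheta})$. Concretely, I would take $f = \mathbbm{1}_A$ for $A = \mathcal{Z}_{[0,\ell-1]}(v) \in \mathfrak{Z}_0(\mathbb{X}_{\vartheta})$ (after a shift, every cylinder can be brought to this form), and observe that by the Birkhoff sum identity $\frac{1}{N}\sum_{j=0}^{N-1}\mathbbm{1}_A(S^j y) = \frac{1}{N}\bigl|y_{[0,N+\ell-2]}\bigr|_v + o(1) = \nu_v\bigl(y_{[0,N+\ell-2]}\bigr) + o(1)$, which by Proposition~\ref{prop:mu-as-existence-of-word-frequencies} (applied to the averaging sequence $I_N = [0, N+\ell-2]$, which has strictly increasing length) converges $\mu_{\vartheta}$-almost surely to the \emph{constant} $R_v^{(\ell)} = \mu_{\vartheta}(A)$. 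Thus the Birkhoff averages of $\mathbbm{1}_A$ converge a.s. to the constant $\mu_{\vartheta}(A)$ for every $A$ in the semi-algebra. Since by the Birkhoff ergodic theorem these averages must also converge a.s. to $\mathbb{E}[\mathbbm{1}_A \mid \mathcal{I}]$, where $\mathcal{I}$ is the $\sigma$-algebra of shift-invariant sets, we conclude $\mathbb{E}[\mathbbm{1}_A \mid \mathcal{I}] = \mu_{\vartheta}(A)$ a.s. for every $A \in \mathfrak{Z}_0(\mathbb{X}_{\vartheta})$. A standard approximation argument (the semi-algebra generates $\mathcal{B}_{\vartheta}$, and conditional expectation is an $L^1$-contraction, so the identity extends from the semi-algebra to the generated algebra and then to all of $\mathcal{B}_{\vartheta}$ by a monotone-class / $\pi$-$\lambda$ argument) then yields $\mathbb{E}[g \mid \mathcal{I}] = \mathbb{E}[g]$ for all $g \in L^1(\mu_{\vartheta})$, which is precisely ergodicity.

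I expect the only genuinely delicate point to be the passage from the identity $\mathbb{E}[\mathbbm{1}_A \mid \mathcal{I}] = \mu_{\vartheta}(A)$ on the generating semi-algebra to its validity on the full $\sigma$-algebra; one must be slightly careful that the class of sets $A$ for which $\mathbb{E}[\mathbbm{1}_A \mid \mathcal{I}]$ is $\mu_{\vartheta}$-a.s. constant is closed under complements and countable disjoint unions (it is, directly from linearity and the $L^1$-continuity of conditional expectation) and contains a generating semi-algebra, hence is all of $\mathcal{B}_{\vartheta}$ by the monotone class theorem. Everything else — the reduction to cylinders based at $0$ via shift-invariance of $\mu_{\vartheta}$, the elementary estimate relating Birkhoff sums of $\mathbbm{1}_{\mathcal{Z}_{[0,\ell-1]}(v)}$ to word counts $|y_{[0,N+\ell-2]}|_v$ up to a bounded boundary error of size $O(\ell)$ which is negligible after division by $N$, and the invocation of Proposition~\ref{prop:mu-as-existence-of-word-frequencies} — is routine. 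An alternative, essentially equivalent route avoiding conditional expectations entirely would be to verify directly that for all cylinders $A,B$ one has $\frac{1}{N}\sum_{j=0}^{N-1}\mu_{\vartheta}(A \cap S^{-j}B) \to \mu_{\vartheta}(A)\mu_{\vartheta}(B)$ using dominated convergence against the a.s.\ limit of the Birkhoff averages; I would present the conditional-expectation version as the cleaner one.
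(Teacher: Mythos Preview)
Your proposal is correct and follows essentially the same route as the paper: both arguments reduce ergodicity to the statement that, for every cylinder indicator $\mathbbm{1}_{\mathcal{Z}_{[k,m]}(v)}$, the Birkhoff averages converge $\mu_{\vartheta}$-almost surely to the constant $R_v^{(|v|)} = \mu_{\vartheta}(\mathcal{Z}_{[k,m]}(v))$, and this is exactly what Proposition~\ref{prop:mu-as-existence-of-word-frequencies} supplies. The only difference is in the packaging of the extension step: the paper passes directly from cylinder indicators to simple functions by linearity and then invokes density of simple functions in $L^1$, whereas you phrase the same extension via $\mathbb{E}[\,\cdot \mid \mathcal{I}\,]$ and a monotone-class argument; your version is arguably a bit more explicit about why the density/extension step is legitimate, but the mathematical content is the same.
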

\begin{proof}
It suffices to show that the identity
\begin{equation} \label{eq:ergodic}
\lim_{N\to\infty} \frac{1}{N} \sum_{j=0}^{N-1} f(S^j x) 
\, = \, \int_{\mathbb{X}_{\vartheta}}f\, \text{d}\mu_{\vartheta}
\end{equation}
holds for every $f\in L^1(\mathbb{X}_{\vartheta}, \mathcal{B}_{\vartheta}, \mu_{\vartheta})$ and almost all $x\in \mathbb{X}_{\vartheta}$.

First, let $\mathcal{Z}\in\mathfrak{Z}_{0}(\mathbb{X}_{\vartheta})$ be a cylinder set and $f=1_{\mathcal{Z}}$ be the indicator function on $\mathcal{Z}$. More precisely, let $\mathcal{Z}=\mathcal{Z}_{[k,m]}(v)$, where $k\leqslant m\in\Z$, and $v\in\mathcal{L}_{\vartheta}^{m-k+1}$. Now, by definition of $\mu_{\vartheta}$ and Proposition \ref{prop:mu-as-existence-of-word-frequencies}, we obtain
\[
\int_{\mathbb{X}_{\vartheta}}f\ \text{d}\mu_{\vartheta}
  \, = \, \mu_{\vartheta}(\mathcal{Z}_{[k,m]}(v)) 
  \, = \, R_v^{(m-k+1)} 
  \, = \, \lim_{N\to\infty}  \frac{|x_{[k,m+N-1]}|_v}{N} 
  \, = \, \lim_{N\to\infty} \frac{1}{N} \sum_{j=0}^{N-1} f(S^j x) ,
\]
for almost all $x$. Hence, Eq.~\eqref{eq:ergodic} is true if $f$ is the indicator function on a cylinder set.

Second, let $\varGamma:=\left\{ \sum_{\mathcal{Z}\in S} a_{\mathcal{Z}}\, 1_{\mathcal{Z}}\ |\ S \subseteq \mathfrak{Z}_0(\mathbb{X}_{\vartheta}) \text{ is finite and } a_{\mathcal{Z}}\in\C\right\}$ be the set of simple functions on $(\mathbb{X}_{\vartheta}, \mathcal{B}_{\vartheta}, \mu_{\vartheta})$. By linearity, the validity of Eq.~\eqref{eq:ergodic} for indicator functions on cylinder sets extends to arbitrary functions in $\varGamma$.

Third, since $\varGamma$ is dense in $L^1(\mathbb{X}_{\vartheta}, \mathcal{B}_{\vartheta}, \mu_{\vartheta})$, the claim follows.
\end{proof}

\section{Unique, strict and intrinsic ergodicity}
\label{sec:unique-strict-intrinsic}

The last part of this paper is devoted to studying further properties of the ergodic measures $\mu_{\vartheta}$ that we constructed in the preceding section. It turns out that random substitutions are a general enough class of objects to allow for a variety of different behaviours. We illustrate this by a number of examples. First, let us show that, in general, the measures $\mu_{\vartheta}$ are not uniquely ergodic.

\begin{example}
\label{ex:random-fib}
Consider the \emph{random period doubling substitution}
\[
\sigma:\ a\mapsto 
\begin{cases}
ab. & \text{ with probability } p \\
ba, & \text{ with probability } q:=1-p
\end{cases}, \quad \ b\mapsto aa,
\]
for some $0<p<1$. A short computation gives
\[
M_{\sigma}= 
\begin{pmatrix}
1 & 2 \\
1 & 0
\end{pmatrix}  \quad \text{ and } \quad M^{}_{\sigma^{}_2} =
\begin{pmatrix}
pq & q & 1+p & 2 \\
1-pq & p & q & 0 \\
1-pq & 1 & 0 & 0 \\
pq & 0 & 0  & 0 
\end{pmatrix}.
\]
Hence, we obtain $R^{(2)}=\left(\frac{2}{3(p^2-p+2)},\frac{2(1-p+p^2)}{3(p^2-p+2)},\frac{2(1-p+p^2)}{3(p^2-p+2)},\frac{p-p^2}{3(p^2-p+2)}\right)^{\intercal}$. Therefore,
\[
\int_{\mathbb{X}_{\sigma}} 1_{\mathcal{Z}_{0}(bb)}\ \text{d}\mu_{\sigma} \, = \, \mu_{\sigma}(\mathcal{Z}_{0}(bb)) \, = \, R_{bb}^{(2)} \, = \, \frac{p-p^2}{3(p^2-p+2)} \, > \, 0.
\]
On the other hand, we have
\[
\frac{1}{N} \sum_{k=0}^{N-1} 1_{\mathcal{Z}_{0}(bb)}(S^kx) \, = \, 0
\]
for all $N\in\N$ if $x$ is an element of the hull of the deterministic period doubling substitution (which is a subset of the random hull). Consequently, Eq.~\eqref{eq:ergodic} does not hold for all $x\in\mathbb{X}_{\sigma}$ and thus $\mu_{\sigma}$ is not uniquely ergodic.
\end{example}

In fact, it is possible to characterise those random substitution subshifts which are uniquely ergodic --- compare \cite[Thm.~27(b) and Cor.~28]{rs} for a proof of the following result.

\begin{prop} \label{prop:uni_ergo}
Let\/ $(\mathbb{X}_{\vartheta},S)$ be a primitive random substitution subshift. 
\begin{enumerate}
\item[(a)] $(\mathbb{X}_{\vartheta},S)$ is uniquely ergodic if and only if the right PF eigenvectors\/ $R^{(\ell)}$ are independent of the probability vectors\/ $\boldsymbol{p}_i$,
for every\/ $\ell$ and every\/ $i$.
\item[(b)] If\/ $(\mathbb{X}_{\vartheta},S)$ is uniquely ergodic, it is also strictly ergodic. \qed
\end{enumerate}  
\end{prop}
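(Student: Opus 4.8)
The plan is to establish the two items separately, with (b) following quickly from (a) together with the minimality of $\mathbb{X}_{\vartheta}$. For the ``only if'' direction of (a) I would argue contrapositively. Suppose some $R^{(\ell)}$ genuinely depends on the probability vectors. Then choose two full-support tuples of probability vectors, both with the same supports as the given rule, whose associated right PF eigenvectors $R^{(\ell)}$ and $R'^{(\ell)}$ disagree in some coordinate $v$ with $|v| = \ell$, and call the corresponding random substitutions $\vartheta$ and $\vartheta'$. Because $\mathcal{L}_{\vartheta}$, and hence $\mathbb{X}_{\vartheta}$, depends only on the multi-valued substitution $\hat{\vartheta}$ (Remark~\ref{Rem:multi-val}), the two substitutions share the subshift $\mathbb{X}_{\vartheta} = \mathbb{X}_{\vartheta'}$; furthermore both are primitive and expanding, since the zero pattern of the mean matrix and the condition $\mathbb{P}[|\vartheta(a)|>1]>0$ only depend on the supports. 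By Theorem~\ref{thm:MAIN} both frequency measures $\mu_{\vartheta}$ and $\mu_{\vartheta'}$ are $S$-invariant on this common subshift, and they are distinct because $\mu_{\vartheta}(\mathcal{Z}_{[0,\ell-1]}(v)) = R^{(\ell)}_v \neq R'^{(\ell)}_v = \mu_{\vartheta'}(\mathcal{Z}_{[0,\ell-1]}(v))$. Hence $(\mathbb{X}_{\vartheta},S)$ is not uniquely ergodic.

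For the ``if'' direction I would use the standard criterion that $(\mathbb{X}_{\vartheta},S)$ is uniquely ergodic if and only if, for every legal word $v$ of length $\ell$, the frequency $\nu_v(w)$ converges uniformly over all legal words $w$, as $|w| \to \infty$; Proposition~\ref{prop:mu-as-existence-of-word-frequencies} then forces the limit to be $R^{(\ell)}_v$. The crucial ingredient is a realisation-uniform version of Proposition~\ref{prop:as-word-frequencies}: if every $R^{(\ell)}$ is independent of the probability vectors, then for each legal $v$ and each $\varepsilon > 0$ there is a $k_0$ such that $|\nu_v(V) - R^{(\ell)}_v| < \varepsilon$ for every $k \geq k_0$, every $a \in \mathcal{A}$ and \emph{every} $V \in \hat{\vartheta}^k(a)$. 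Granting this, a legal word $w$ of large length is a subword of some $\hat{\vartheta}^m(a)(\omega)$ with $m > k_0$, and cutting along the level-$k_0$ inflation boundaries exhibits all of $w$, apart from a prefix and a suffix of length at most $\mu^{k_0}$ (with $\mu$ the maximal one-step inflation length), as a concatenation of level-$k_0$ inflation words $\vartheta^{k_0}(b)(\omega)$. Each of these blocks has $\nu_v$ within $\varepsilon$ of $R^{(\ell)}_v$, so $\nu_v(w)$ deviates from $R^{(\ell)}_v$ by at most $2\varepsilon + \mathcal{O}(\mu^{k_0}/|w|)$; letting $|w| \to \infty$ and then $\varepsilon \to 0$ gives the required uniform convergence, hence unique ergodicity.

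The hard part is exactly this realisation-uniform lemma. One implication is immediate (a realisation-independent limit cannot depend on the probability vectors), but the converse --- that constancy of every $R^{(\ell)}$ in the probability vectors forces \emph{all} elements of $\hat{\vartheta}^k(a)$, not merely the $\mathbb{P}$-typical ones, to have near-$R^{(\ell)}$ subword frequencies --- is the substantive point (it is in essence \cite[Thm.~27(b)]{rs}). The tempting shortcut of re-weighting the probability vectors so as to make an exceptional realisation typical does not work, because that would require biased and level-dependent weights; instead I would analyse $R^{(\ell)}$ as a rational function of the probability vectors and translate its being constant into a combinatorial cancellation property of $\hat{\vartheta}$ that yields the uniform bound. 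Finally, part (b) only requires that a primitive random substitution subshift is minimal: by primitivity there is an $N_0$ with $v \triangleleft \hat{\vartheta}^{N_0}(a)$ for all $a \in \mathcal{A}$, and then the same block decomposition shows that every legal word of length at least $2\mu^{N_0}$ contains $v$, so $v$ recurs with bounded gaps in every $x \in \mathbb{X}_{\vartheta}$; a minimal, uniquely ergodic system is strictly ergodic by definition.
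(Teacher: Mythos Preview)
The paper does not give its own proof of this proposition; it simply cites \cite[Thm.~27(b) and Cor.~28]{rs}. So there is no in-paper argument to compare against, but your proposal can still be assessed on its own terms.

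Your argument for (b) contains a genuine error. You assert that primitivity alone forces $\mathbb{X}_{\vartheta}$ to be minimal, reasoning that ``by primitivity there is an $N_0$ with $v \triangleleft \hat{\vartheta}^{N_0}(a)$ for all $a$'' and hence every sufficiently long legal word contains $v$. But primitivity (Corollary~\ref{1-coro-prim-subst}) only guarantees that \emph{some} realisation in $\hat{\vartheta}^{N_0}(a)$ contains $v$, not every realisation; in your block decomposition the level-$N_0$ blocks are specific realisations, so nothing forces $v$ to appear. The paper itself remarks, immediately after the proposition, that ``most interesting examples of random substitutions \ldots\ give rise to highly non-minimal subshifts''; concretely, in the random period doubling substitution (Example~\ref{ex:random-fib}) the word $bb$ is legal, yet the deterministic period-doubling sequence lies in $\mathbb{X}_{\vartheta}$ and never contains $bb$. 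A correct route to (b) is: if $(\mathbb{X}_{\vartheta},S)$ is uniquely ergodic, its unique invariant measure must be the frequency measure $\mu_{\vartheta}$, which has full support because $R^{(\ell)}_v>0$ for every legal $v$ (Perron--Frobenius applied to the primitive matrix $M_{\vartheta_{\ell}}$); a uniquely ergodic system whose unique measure has full support is minimal, hence strictly ergodic. Alternatively, under the hypothesis of (b) part (a) places you in the parameter-free situation, and then your realisation-uniform lemma \emph{does} yield minimality --- but primitivity alone does not.

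For (a), your ``only if'' direction is correct and is exactly the mechanism the paper exploits just after the proposition to produce uncountably many mutually singular frequency measures. For the ``if'' direction you correctly isolate the crux --- the realisation-uniform convergence of $\nu_v$ over \emph{all} elements of $\hat{\vartheta}^k(a)$ --- and you are honest that this is essentially \cite[Thm.~27(b)]{rs}. Your proposed attack (analyse $R^{(\ell)}$ as a rational function of the probabilities and extract a combinatorial cancellation) is reasonable in spirit, but as written it is a sketch rather than a proof; the step from ``$R^{(\ell)}$ is constant on the open parameter simplex'' to ``every individual realisation has near-$R^{(\ell)}$ frequencies'' needs an actual argument.
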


Most interesting examples of random substitutions however, give rise to highly non-minimal subshifts $\mathbb{X}$; see \cite{rs}. In that case, we can find an uncountable family of ergodic frequency measures with full support on $\mathbb{X}$. Let us expand a bit on this point. 

Recall that for a random substitution rule $P^{\vartheta}$, the subshift $\mathbb X_{\vartheta}$ depends only on the family $(\supp P^{\vartheta}_a)_{a \in \mc A}$. For each $a \in \mc A$, the values of $P^{\vartheta}_a$ on its support can therefore be regarded as free parameters with the additional constraint of forming a probability vector $\boldsymbol{p}_a$ on $\supp P^{\vartheta}_a$ with non-zero entries. Given any $\ell \in \mathbb{N}$, the entries of $M_{\vartheta_\ell}$ are polynomials in the entries of $\boldsymbol{p}_a$ for $a \in \mc A$ by construction and thus $R^{(\ell)}$ depends continuously on those parameters, compare $R^{(2)}$ in Example~\ref{ex:random-fib}. Assuming $R^{(\ell)}$ is not constant in all of the $\boldsymbol{p}_a$, it thus takes a continuum of values giving rise to a continuum of different ergodic frequency measures on $\mathbb{X}_{\vartheta}$.
We summarise this observation as follows.
\begin{coro}
Let\/ $\mathbb{X}$ be a random substitution subshift. For any pair\/ $\vartheta,\vartheta'$ of expanding primitive random substitution generating\/ $\mathbb{X}=\mathbb{X}_{\vartheta} = \mathbb{X}_{\vartheta'}$, the frequency measures\/ $\mu_{\vartheta}$ and\/ $\mu_{\vartheta'}$ are either equal or mutually singular. If\/ $(\mathbb{X},S)$ is not uniquely ergodic, there is an uncountable set of random substitutions\/ $\Theta$ such that\/ $\mu_{\vartheta} \perp \mu_{\vartheta'}$ for all\/ $\vartheta, \vartheta' \in \Theta$ with\/ $\vartheta \neq \vartheta' $. \qed
\end{coro}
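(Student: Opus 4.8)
The plan is to prove the two assertions separately, both relying on the structural fact established earlier that $\mu_{\vartheta}$ is determined on the generating semi-algebra $\mathfrak{Z}_0(\mathbb{X}_{\vartheta})$ by $\mu_{\vartheta}(\mathcal{Z}_{[k,m]}(v)) = R^{(\ell)}_v$, together with Proposition~\ref{prop:mu-as-existence-of-word-frequencies}, which identifies $R^{(\ell)}_v$ with the $\mu_{\vartheta}$-almost sure frequency of $v$ along any averaging sequence. First I would fix two expanding primitive random substitutions $\vartheta, \vartheta'$ with $\mathbb{X}_{\vartheta} = \mathbb{X}_{\vartheta'} = \mathbb{X}$. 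Since the subshift depends only on the supports $(\supp P^{\vartheta}_a)_{a \in \mc A}$, both substitutions share the same language $\mathcal{L}$ and the same collared alphabets $\mc A_\ell$; only the probability vectors (hence the eigenvectors $R^{(\ell)}$ and $(R')^{(\ell)}$) differ.

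For the dichotomy, I would argue as follows. If $R^{(\ell)}_v = (R')^{(\ell)}_v$ for every $\ell \in \mathbb{N}$ and every $v \in \mathcal{L}^{\ell}_{\vartheta}$, then $\mu_{\vartheta}$ and $\mu_{\vartheta'}$ agree on $\mathfrak{Z}_0(\mathbb{X})$, hence on $\mathcal{B}_{\vartheta}$ by the uniqueness part of the Carathéodory/Hahn extension cited after Remark~\ref{rem:semi-algebra}; so $\mu_{\vartheta} = \mu_{\vartheta'}$. Otherwise there is some $v \in \mathcal{L}^{\ell}$ with $R^{(\ell)}_v \neq (R')^{(\ell)}_v$. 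Choose any averaging sequence $\{I_n\}$ of strictly increasing length (say $I_n = [-n,n]$). By Proposition~\ref{prop:mu-as-existence-of-word-frequencies} applied to $\vartheta$, the set
\[
A \, = \, \Bigl\{ y \in \mathbb{X} \;\Big|\; \lim_{n \to \infty} \nu_v\bigl(y_{[-n,n]}\bigr) = R^{(\ell)}_v \Bigr\}
\]
has $\mu_{\vartheta}(A) = 1$; applied to $\vartheta'$, the analogous set $A'$ with limit $(R')^{(\ell)}_v$ has $\mu_{\vartheta'}(A') = 1$. Since the two limits differ, $A \cap A' = \varnothing$, so $A$ is a Borel set with $\mu_{\vartheta}(A) = 1$ and $\mu_{\vartheta'}(A) = 0$, which is precisely mutual singularity $\mu_{\vartheta} \perp \mu_{\vartheta'}$.

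For the second assertion, suppose $(\mathbb{X},S)$ is not uniquely ergodic. Then by Proposition~\ref{prop:uni_ergo}(a), for some fixed $\ell$, the map sending the tuple of probability vectors $\boldsymbol{p} = (\boldsymbol{p}_a)_{a \in \mc A}$ (ranging over the open convex set of vectors with strictly positive entries and fixed supports) to $R^{(\ell)}(\boldsymbol{p})$ is non-constant. As noted in the paragraph preceding the corollary, the entries of $M_{\vartheta_\ell}$ are polynomials in $\boldsymbol{p}$, so by Perron--Frobenius perturbation theory $\boldsymbol{p} \mapsto R^{(\ell)}(\boldsymbol{p})$ is continuous (indeed real-analytic) on this connected open parameter domain; being non-constant and continuous on a connected set, its image is uncountable. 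Pick an uncountable subset $T$ of the parameter domain on which $\boldsymbol{p} \mapsto R^{(\ell)}(\boldsymbol{p})$ is injective — possible since a continuous non-constant map on an open subset of Euclidean space has uncountably many distinct values and one may select one preimage of each — and let $\Theta$ be the corresponding uncountable family of random substitutions (all with supports generating $\mathbb{X}$). For distinct $\vartheta, \vartheta' \in \Theta$ we have $R^{(\ell)} \neq (R')^{(\ell)}$, so by the dichotomy just proved, $\mu_{\vartheta} \perp \mu_{\vartheta'}$.

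The main obstacle is the construction of the uncountable family with pairwise distinct $\ell$-eigenvectors: one must carefully justify that non-unique ergodicity really yields non-constancy of $R^{(\ell)}$ in the $\boldsymbol{p}_a$ for a single fixed $\ell$ (this is exactly Proposition~\ref{prop:uni_ergo}(a), read contrapositively), and then extract an uncountable subset of parameters on which the eigenvector is genuinely injective rather than merely taking uncountably many values — a mild set-theoretic point handled by choosing one representative parameter per value. Everything else is bookkeeping: the extension/uniqueness of measures from the semi-algebra, and the disjointness of the full-measure frequency sets coming from Proposition~\ref{prop:mu-as-existence-of-word-frequencies}.
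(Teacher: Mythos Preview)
Your proof is correct. The paper gives no formal proof (the \qed signals it is meant to follow from the preceding paragraph together with Theorem~\ref{thm:MAIN}); the intended one-line argument for the dichotomy is simply that both $\mu_{\vartheta}$ and $\mu_{\vartheta'}$ are ergodic by Theorem~\ref{thm:MAIN}, and two distinct ergodic probability measures for the same $\mathbb{Z}$-action are automatically mutually singular. Your route instead builds the disjoint full-measure sets by hand from Proposition~\ref{prop:mu-as-existence-of-word-frequencies}, which amounts to re-proving that standard fact in this concrete setting --- equally valid, just slightly longer. For the second assertion you reproduce exactly the continuity/non-constancy argument the paper sketches just before the corollary. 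One small cosmetic point: your remark that ``only the probability vectors differ'' presumes $\vartheta$ and $\vartheta'$ share the same supports, which the statement does not literally demand; your actual argument, however, never uses this --- it only needs $\mathcal{L}_{\vartheta} = \mathcal{L}_{\vartheta'}$ (immediate from $\mathbb{X}_{\vartheta} = \mathbb{X}_{\vartheta'}$) and the identification of cylinder measures with the $R^{(\ell)}_v$.
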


Next, let us focus on the question whether or not a primitive random substitution subshift can be intrinsically ergodic, that is whether it allows a unique measure which maximises the entropy. We know that there is at least one measure of maximal entropy, see \cite[Sec. (17.15), Cor. 2]{dgs}. Consider the following two examples. 

\begin{example}
Let\/ $\mathbb{X}$ be a topologically transitive shift of finite type. It was shown in \cite{grs} that\/ $\mathbb{X}$ can be realised as a primitive RS-subshift. Since every topologically transitive shift of finite type is intrinsically ergodic \cite{parry}, there are intrinsically ergodic RS-subshifts. At present, it remains open whether the measure of maximal entropy on\/ $\mathbb{X}$ can be realised as a frequency measure\/ $\mu_{\vartheta}$ arising from some random substitution\/ $\vartheta$.
\end{example}

\begin{example}
The Dyck shift is a coded shift, which is not sofic. Moreover, it has two measures of maximal entropy \cite{krieger}. Hence, it is not intrinsically ergodic. The Dyck shift can be realised as a primitive RS-subshift via the random substitution
\[
(\, \mapsto \bigl\{((),\, ([\,],\, (\ \bigr\}, \quad \ \ [\, \mapsto \bigl\{ [(),\, [[\,],\, [\ \bigr\}, \quad )\, \mapsto \bigl\{()),\, [\,]),\, ) \bigr\},\quad ]\, \mapsto \bigl\{[\,]],\, ()],\, ] \bigr\},
\]
where the assignment of (non-degnerate) probability vectors is arbitrary. Therefore, not every RS-subshift is intrinsically ergodic.
\end{example}

To get a better feeling, let us compute the metric entropy for a specific example. To do so, we need the following result, which follows by an application of standard techniques from linear algebra; compare \cite[Thm.~1.1, Cor.~1]{seneta}.

\begin{lemma}  \label{lem:eigenvalue}
Let\/ $M\in\operatorname{Mat}(d,\R)$ be a primitive matrix such that\/ $M_{ij}=m_i\in \R_{\ge0}$ for all\/ $i,j\in\{1,\ldots,d\}$. Then,\/ $\lambda=\sum_{i=1}^dm_i$ is the PF eigenvalue of\/ $M$, and the corresponding (normalised) right PF eigenvector is given by 
\[
R \, = \, \frac{1}{\lambda}\, (m_1,\ldots,m_d)^{\intercal}.
\] \qed 
\end{lemma}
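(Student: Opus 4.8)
The plan is to prove Lemma~\ref{lem:eigenvalue} by direct verification: exhibit the candidate eigenpair and then invoke the Perron--Frobenius theorem to conclude that this is indeed \emph{the} PF eigenvalue and right eigenvector. Since the matrix has the very special form $M_{ij} = m_i$ (each row is constant), the computation is essentially immediate.

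First I would compute $MR$ for the candidate $R = \lambda^{-1}(m_1,\ldots,m_d)^{\intercal}$, where $\lambda := \sum_{i=1}^d m_i$. For each row index $i$,
\[
(MR)_i \, = \, \sum_{j=1}^d M_{ij} R_j \, = \, \sum_{j=1}^d m_i \cdot \frac{m_j}{\lambda} \, = \, \frac{m_i}{\lambda} \sum_{j=1}^d m_j \, = \, m_i \, = \, \lambda R_i,
\]
so $MR = \lambda R$. Note that $\lambda > 0$ because $M$ is primitive (hence nonzero, with nonnegative entries, so at least one $m_i > 0$), and $R$ is a nonnegative, nonzero vector; moreover $\norm{R}_1 = \lambda^{-1}\sum_i m_i = 1$, so $R$ has the stated normalisation. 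Thus $(\lambda, R)$ is an eigenpair with $R \geqslant 0$, $R \neq 0$.

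Second, I would apply the Perron--Frobenius theorem for primitive (nonnegative, irreducible, aperiodic) matrices: such a matrix has a simple positive eigenvalue equal to its spectral radius, and this is the unique eigenvalue admitting a nonnegative eigenvector, which is in fact strictly positive. Since we have produced a nonnegative eigenvector $R$ with eigenvalue $\lambda > 0$, uniqueness forces $\lambda$ to be the PF eigenvalue and $R$ (after the chosen $\ell^1$-normalisation) to be the corresponding right PF eigenvector. In particular $R$ is automatically strictly positive, which is consistent with all $m_i$ being strictly positive; if some $m_i = 0$ the $i$-th row of $M$ would vanish, contradicting primitivity, so in fact every $m_i > 0$.

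There is no real obstacle here; the only point requiring a moment's care is making sure the hypotheses of the Perron--Frobenius theorem are genuinely met (primitivity is assumed) and that we are entitled to conclude \emph{uniqueness} of the nonnegative eigenvector rather than merely existence — this is exactly the statement cited from \cite[Thm.~1.1, Cor.~1]{seneta}. Once that is in hand, matching our explicit eigenpair against the unique PF data completes the proof.
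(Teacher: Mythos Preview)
Your proof is correct and is exactly the direct verification the paper has in mind: the lemma is stated without proof in the paper (it ends with a \qed\ after citing \cite[Thm.~1.1, Cor.~1]{seneta}), so your explicit computation $MR=\lambda R$ together with the Perron--Frobenius uniqueness of a nonnegative eigenvector is precisely the ``standard techniques from linear algebra'' being invoked.
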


\begin{prop}
Consider the random substitution
\[
\zeta:\ a\mapsto 
\begin{cases}
ab, & \text{ with probability } p  \\
ba, & \text{ with probability } 1-p
\end{cases}, \quad \  b\mapsto 
\begin{cases}
ab, & \text{ with probability } p  \\
ba, & \text{ with probability } 1-p
\end{cases} ,
\]
for\/ $0<p<1$. If\/ $\mu$ denotes the corresponding frequency measure, the metric entropy\/ $h_{\mu}$ is given by
\[
h_{\mu} \,  = \, -\frac{1}{2}\, \Big( p\log(p) + (1-p)\log(1-p) \Big).
\]
\end{prop}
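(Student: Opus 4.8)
The plan is to identify the frequency measure $\mu$ explicitly as a symmetrisation of a Bernoulli measure, and then read off its entropy via the ergodic decomposition together with the affinity of Kolmogorov--Sinai entropy.

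\emph{Step 1: the block structure of $\zeta^n(a)$.} By Lemma~\ref{lem:eigenvalue} applied to $M_{\zeta}=\left(\begin{smallmatrix}1&1\\1&1\end{smallmatrix}\right)$ one gets $\lambda=2$, so $|\zeta^{n}(a)|=2^{n}$ is deterministic. The crucial point is that the distribution of $\zeta(c)$ is the \emph{same} probability vector ($ab$ with probability $p$, $ba$ with probability $1-p$) for both letters $c\in\{a,b\}$, while the random substitution acts on distinct letter positions through independent randomness. Writing $\zeta^{n}(a)=\zeta(\zeta^{n-1}(a)_1)\cdots\zeta(\zeta^{n-1}(a)_{2^{n-1}})$, this shows that $\zeta^n(a)$ is distributed exactly as a concatenation of $2^{n-1}$ i.i.d.\ blocks, each equal to $ab$ with probability $p$ and to $ba$ with probability $1-p$. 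Let $\beta$ be the corresponding Bernoulli measure on $\{ab,ba\}^{\mathbb{Z}}$, let $\pi\colon\{ab,ba\}^{\mathbb{Z}}\to\{a,b\}^{\mathbb{Z}}$ be the block concatenation map — injective, continuous, and satisfying $\pi\circ\sigma=S^2\circ\pi$ for the block shift $\sigma$ — and put $\nu:=\pi_*\beta$. A short legality argument (every concatenation of blocks appears in some $\zeta^N(a)$ with positive probability) shows $\supp\nu\subseteq\mathbb{X}_{\zeta}$, so $\nu$ is a Borel probability measure on $(\mathbb{X}_\zeta,\mathcal B_\zeta)$.

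\emph{Step 2: $\mu=\tfrac12(\nu+S_*\nu)$.} Both sides are shift-invariant probability measures on $(\mathbb{X}_\zeta,\mathcal B_\zeta)$ (for the right-hand side use $S^2_*\nu=\nu$), so by Remark~\ref{rem:semi-algebra} and uniqueness of measure extension, and using the $S^2$-invariance of $\nu$, it suffices to check that for every $v\in\mathcal L^{\ell}_{\zeta}$,
\[
R^{(\ell)}_v \, = \, \tfrac12\bigl(\nu(\mathcal Z_{[0,\ell-1]}(v))+\nu(\mathcal Z_{[1,\ell]}(v))\bigr).
\]
By Proposition~\ref{prop:as-word-frequencies}, $R^{(\ell)}_v=\lim_{n}\nu_v(\zeta^n(a))$ almost surely. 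Writing $\zeta^n(a)=U_1\cdots U_N$ with $N=2^{n-1}$ and $U_i$ i.i.d.\ blocks, I would split the occurrences of $v$ in $U_1\cdots U_N$ according to whether they begin at an odd (block-aligned) or an even (block-crossing) position; by the ergodic theorem for the i.i.d.\ block sequence — a stationary $\lceil(\ell+1)/2\rceil$-dependent process, with only $O(1)$ boundary terms — the two normalised counts converge almost surely to $\nu(\mathcal Z_{[0,\ell-1]}(v))$ and $\nu(\mathcal Z_{[1,\ell]}(v))$, so $\nu_v(\zeta^n(a))=\frac{1}{2N}|U_1\cdots U_N|_v$ converges to the right-hand side above. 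Comparing limits gives the identity, hence $\mu=\tfrac12(\nu+S_*\nu)$.

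\emph{Step 3: the entropy.} The measures $\nu$ and $S_*\nu$ are $S^2$-invariant and $S^2$-ergodic — each is isomorphic, via $\pi$ (resp.\ $S\circ\pi$), to the Bernoulli shift $(\{ab,ba\}^{\mathbb{Z}},\sigma,\beta)$ — and they are mutually singular: since $0<p<1$, almost every point of each support contains a factor $aa$ or $bb$, which is forced to straddle a block boundary and hence pins down the block phase, and the parity of the positions of such factors differs $\nu$-a.s.\ from $S_*\nu$-a.s. Therefore $\mu=\tfrac12\nu+\tfrac12 S_*\nu$ is the ergodic decomposition of $\mu$ with respect to $S^2$. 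Using affinity of the Kolmogorov--Sinai entropy in the invariant measure (see, e.g., \cite{walters}), $h_{S_*\nu}(S^2)=h_\nu(S^2)$ (conjugate systems), and $h_\nu(S^2)=h_\beta(\sigma)=-p\log p-(1-p)\log(1-p)$ (entropy of a Bernoulli shift), one obtains $h_\mu(S^2)=-p\log p-(1-p)\log(1-p)$. Finally $h_\mu=h_\mu(S)=\tfrac12 h_\mu(S^2)$ gives the claimed formula.

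The main obstacle I anticipate is Step 2: relating the abstractly defined $\mu$ — specified only through the numbers $R^{(\ell)}_v$ on cylinders — to the concrete block measure $\nu$, in particular justifying the passage to the limit of word frequencies inside the i.i.d.\ block model (handling the block-crossing occurrences and the boundary terms) and establishing the mutual singularity of $\nu$ and $S_*\nu$. The entropy bookkeeping in Step 3 is then routine.
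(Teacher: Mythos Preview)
Your approach is correct and genuinely different from the paper's. The paper computes $h_\mu$ directly from the definition: using Lemma~\ref{lem:eigenvalue} to express $R_w^{(n)} = \tfrac{1}{\lambda} m_w^{(n)}$ with $m_w^{(n)} = \mathbb{E}|\zeta_n(u)|_w$, it restricts to the even subsequence, splits $\mathbb{E}|\zeta_{2n}(u)|_{u_i}$ according to the two possible offsets $k\in\{1,2\}$ within the period-$2$ block structure, and then evaluates the resulting binomial sums $\sum_j \binom{n}{j} p^j(1-p)^{n-j}\log\bigl(p^j(1-p)^{n-j}\bigr)$ explicitly, carrying along a vanishing error term for the two periodic words $abab\cdots$ and $baba\cdots$ where both offsets contribute. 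Your argument instead identifies $\mu$ structurally as $\tfrac12(\nu+S_*\nu)$ with $\nu$ the push-forward of a two-symbol Bernoulli measure, and then reads off the entropy via affinity and $h_\mu(S)=\tfrac12 h_\mu(S^2)$. This is more conceptual and yields additional information --- the explicit $S^2$-ergodic decomposition of $\mu$ and an isomorphism of $(\mathbb{X}_\zeta,S^2,\nu)$ with a Bernoulli shift --- at the cost of importing standard entropy machinery (Abramov, affinity). The paper's hands-on computation, by contrast, is self-contained and dovetails with the subsequent Proposition~\ref{thm:max_ent}, where the same column-constant structure of $M_{\vartheta_n}$ is exploited but no clean Bernoulli identification is available. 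One minor remark: in your Step~3, affinity of entropy for a finite convex combination of invariant measures holds without any singularity hypothesis, so the mutual-singularity argument is only needed to justify the phrase ``ergodic decomposition''; the entropy bookkeeping goes through regardless.
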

\begin{proof}
By definition of the metric entropy and the frequency measures, we obtain from Lemma~\ref{lem:eigenvalue}
\begin{equation}  \label{eq:met_ent}
\begin{split}
h_{\mu}
   & \, = \, \lim_{n\to\infty} -\frac{1}{n} \sum_{w\in \mathcal{L}_{\zeta}^n} \mu([w])\cdot 
       \log( \mu([w]))   
     \, = \,  \lim_{n\to\infty} -\frac{1}{n} \sum_{w\in \mathcal{L}_{\zeta}^n} R_{w}^{(n)}\cdot 
       \log \left(R_{w}^{(n)}\right)   \\
   & \, = \,  \lim_{n\to\infty} -\frac{1}{n} \sum_{i=1}^{|\mathcal{L}_{\zeta}^n|} \frac{1}{\lambda}
       m_i^{(n)}\cdot \log \left(\frac{1}{\lambda}m_i^{(n)} \right)   
     \, = \,  \frac{1}{\lambda} \cdot \lim_{n\to\infty} -\frac{1}{n} 
       \sum_{i=1}^{|\mathcal{L}_{\zeta}^n|} m_i^{(n)}\cdot  \log \left(m_i^{(n)}\right),
\end{split}
\end{equation}
where $m_i^{(n)} = \mathbb{E} |\zeta_n(u)|_{u_i}$, for $u,u_i\in \mathcal{L}_{\zeta}^n$, is independent of $u$. From now on, we consider the subsequence $(m_i^{(2n)})_{n\in\N}$. Let $u=u_1\ldots u_{2n}\in \mathcal{L}_{\zeta}^{2n}$. Then, for all $\omega \in \Omega$, $\zeta(u)(\omega)$ is of the form
\[
\zeta(u)(\omega) \, = \, \mathop{\odot}\limits_{k=1}^{2n} v^{k},\quad\ v^{k}\in\{ab,ba\},
\]
where $\odot$ denotes concatenation of words from left to right.
Therefore, 
\[
\zeta_{2n}(u)(\omega) \, = \, (v^{1}\ldots v^{n})\, (v^{1}_2 v^{2}\ldots v^{n}v^{n+1}_1).
\]
This implies
\begin{equation}
\label{eq:7-ex-splitting}
\mathbb{E}\big[ |\zeta_{2n}(u)|_{u_i}\big] 
    \, = \, \sum_{k=1}^2 \mathbb{E} \Big[\delta_{u_i}\big( \zeta(u)_{[k,k+2n-1]}\big)\Big]  \\
     \, = \, \sum_{k=1}^2 \mathbb{P}[\zeta(u)_{[k,k+2n-1]}=u_i].
\end{equation}
Note that, for a specific choice of the $v^{k}$, $k \in \{1,\ldots,n\}$,  
\[
\mathbb{P}\left[\zeta(u)_{[1,2n]} = \mathop{\odot}\limits_{k=1}^{n} v^{k}\right]  \, = \, p^{|\{k |v^{k}=ab\}|}\cdot(1-p)^{|\{k|v^{k}=ba\}|} .
\]
Since the suffix and prefix $v_2^{1}$ and $v_1^{n+1}$ determine the inflation words $v^{1}$ and $v^{n+1}$ uniquely, we also have that
\[
\mathbb{P}\left[\zeta(u)_{[2,2n+1]} = v_2^{1} \left( \mathop{\odot}\limits_{k=2}^{n} v^{k} \right) v_1^{n+1} \right] \, = \, p^{|\{k|v^{k}=ab\}|}\cdot(1-p)^{|\{k|v^{k}=ba\}|} ,
\]
where $k$ ranges from $1$ to $n+1$ on the right hand side. For $u_i$ to be legal, it must either be of the form $u_i=v^{1} \cdots v^{n}$ or $u_i = v_2^{1} v^{2} \cdots v^{n} v_1^{n+1}$, determining the words $v^{k} \in \{ab,ba\}$ uniquely in either case. The only words that can be written in both forms are $u_a = abab\ldots ab$ and $u_b = baba\ldots ba$. Note that there are exactly $\binom{n}{j}$ different words of the form $u=v^{1} \cdots v^{n}$ with $| \{ k = 1,\ldots,n | v^{k} = ab\}| = j$ for every $j \in \{0,\ldots,n\}$ and $\binom{n+1}{j}$ pairwise different words of the form $u_i = v_2^{1} v^{2} \cdots v^{n} v_1^{n+1}$ with $| \{ k = 0,\ldots,n | v^{k} = ab\}| = j$ for all $j \in \{0,\ldots,n+1\}$.
Together with Eq.~\eqref{eq:7-ex-splitting}, we obtain
\[
\begin{split}
\sum_{i=1}^{|\mathcal{L}_{\zeta}^{2n}|} m_i^{(2n)}\cdot  \log(m_i^{(2n)}) 
    & \, = \, \sum_{j=0}^n \binom{n}{j} p^j(1-p)^{n-j} \log\Big(p^j(1-p)^{n-j}\Big)  \\
    &\phantom{===}+ \sum_{j=0}^{n+1} \binom{n+1}{j} p^j(1-p)^{n+1-j} \log\Big(p^j(1-p)^{n+1-j}\Big)  \\
    &\phantom{===}+ \text{Err}_{2n}     \\
    & \, = \, \log(p)(2n+1)p + \log(1-p)(2n+1)(1-p) + \text{Err}_{2n},
\end{split}
\]
where the error term $\text{Err}_{2n} = \text{Err}_{2n}^a + \text{Err}_{2n}^b$, with
\begin{align*}
\text{Err}^a_{2n} & \, = \, \left( p^n + (1-p)^{n+1} \right) \log \left( p^n + (1-p)^{n+1} \right) - p^n \log\left(p^n\right) - (1-p)^{n+1} \log \left( (1-p)^{n+1} \right)
\\ \text{Err}^b_{2n} & \, = \, \left( p^{n+1} + (1-p)^{n} \right) \log \left( p^{n+1} + (1-p)^{n} \right) - p^{n+1} \log\left(p^{n+1}\right) - (1-p)^{n} \log \left( (1-p)^{n} \right)
\end{align*}
accounts for the fact that there are two words $u_a$ and $u_b$ for which both summands in Eq.~\eqref{eq:7-ex-splitting} are non-zero.
Clearly, we have $\frac{1}{2n}\text{Err}_{2n} \to 0$ for $n\to\infty$. Hence, we conclude
\[
\begin{split}
h_{\mu} 
    & \, = \, -\frac{1}{\lambda}\cdot \lim_{n\to\infty} \frac{1}{2n} \Big(\log(p)(2n+1)p + \log(1-p)(2n+1)(1-p) 
       + \text{Err}_{2n} \Big) \\
    & \, = \, -\frac{1}{2}\, \big(p\log(p)+(1-p)\log(1-p)\big),
\end{split}
\]
since $\lambda=2$.
\end{proof}

\begin{coro}
\label{coro:intrinsic-ergodic}
Let\/ $\zeta$ be as in the previous proposition. Then, the RS-subshift\/ $\mathbb{X}_{\zeta}$ is intrinsically ergodic, and the frequency measure which corresponds to\/ $p=\frac{1}{2}$ is the measure of maximal entropy.
\end{coro}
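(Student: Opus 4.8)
The plan is to recognise $\mathbb{X}_{\zeta}$, up to a single periodic orbit, as a height‑two tower over the full shift on two letters, and then to read off both the topological entropy and the uniqueness of the measure of maximal entropy from the corresponding classical facts for the full shift.

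First one describes $\mathbb{X}_{\zeta}$ combinatorially. Since $\supp P^{\zeta}_a = \supp P^{\zeta}_b = \{ab,ba\}$, the associated multi-valued substitution satisfies $\hat{\zeta}^{\,n}(c) = \{\text{concatenations of } 2^{\,n-1} \text{ blocks from } \{ab,ba\}\}$ for every $c\in\mathcal{A}$, so $\mathcal{L}_{\zeta}$ is exactly the set of factors of finite concatenations of the blocks $ab$ and $ba$. An elementary window argument then shows that $x\in\mathbb{X}_{\zeta}$ if and only if there is a phase $\varepsilon\in\{0,1\}$ with $x_{2i+\varepsilon}\neq x_{2i+1+\varepsilon}$ for all $i\in\Z$. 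Writing $\mathbb{X}_{\varepsilon}$ for the closed set of such $x$, one gets $\mathbb{X}_{\zeta}=\mathbb{X}_0\cup\mathbb{X}_1$, the intersection $\mathbb{X}_0\cap\mathbb{X}_1=\{z,Sz\}$ is the period‑two alternating orbit, one has $S\mathbb{X}_0=\mathbb{X}_1$ and $S^2\mathbb{X}_{\varepsilon}=\mathbb{X}_{\varepsilon}$, and the coding map $\Psi\colon\mathbb{X}_0\to\{0,1\}^{\Z}$ given by $\Psi(x)_i=1$ if $x_{2i}x_{2i+1}=ba$ and $\Psi(x)_i=0$ if $x_{2i}x_{2i+1}=ab$ is a homeomorphism conjugating $S^2|_{\mathbb{X}_0}$ with the full shift $\sigma$ on two symbols.

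Next one transfers this structure to invariant measures. Let $\mu$ be any $S$-invariant Borel probability measure on $\mathbb{X}_{\zeta}$. Either $\mu=\tfrac12(\delta_z+\delta_{Sz})$, which has zero entropy, or $\mu(\{z,Sz\})=0$; in the latter case $\mu=\tfrac12(\mu_0+S_*\mu_0)$ with $\mu_0:=2\mu|_{\mathbb{X}_0}$ an $S^2$-invariant probability measure, and using $h_{\mu}(S^2)=2h_{\mu}(S)$ together with affinity of entropy over the $S^2$-ergodic decomposition one obtains $h_{\mu}(S)=\tfrac12 h_{\mu_0}(S^2)=\tfrac12 h_{\Psi_*\mu_0}(\sigma)$. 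Since the full two-shift has topological entropy $\log 2$, this yields $h_{\mu}(S)\leqslant\tfrac12\log 2$, with equality exactly when $\Psi_*\mu_0$ is a measure of maximal entropy for $(\{0,1\}^{\Z},\sigma)$ — which by \cite{parry} happens for one and only one measure, the uniform Bernoulli measure. Combined with the variational principle and the preceding proposition (which gives $h_{\mu}=\tfrac12\log 2$ for the frequency measure at $p=\tfrac12$, hence $h_{\mathrm{top}}(\mathbb{X}_{\zeta})=\tfrac12\log 2$ and that this frequency measure is a measure of maximal entropy), it follows that every ergodic measure of maximal entropy on $\mathbb{X}_{\zeta}$ has the same image $\Psi_*\mu_0$ and is therefore uniquely determined; as an arbitrary measure of maximal entropy is an average of ergodic ones, $\mathbb{X}_{\zeta}$ is intrinsically ergodic, and the unique maximiser is the frequency measure for $p=\tfrac12$, which is ergodic by Theorem~\ref{thm:MAIN}.

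The main technical point is the bookkeeping in the tower decomposition: one must verify that $\mathbb{X}_0$ and $\mathbb{X}_1$ genuinely exhaust $\mathbb{X}_{\zeta}$ and overlap only in the alternating orbit — this is where the window argument is needed, since the phase of a legal word cannot be detected locally on an alternating block — and that $\mu_0$ is $S^2$-ergodic whenever $\mu$ is $S$-ergodic, a routine but slightly delicate period-splitting argument. Once the tower picture is in place, the statement reduces to the classical intrinsic ergodicity of the full shift.
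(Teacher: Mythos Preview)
Your argument is correct but follows a genuinely different route from the paper. The paper simply exhibits $\mathbb{X}_{\zeta}$ as an irreducible sofic shift via an explicit labelled graph on three vertices, invokes the general fact that such shifts are intrinsically ergodic, cites the value $h_{\mathrm{top}}(\mathbb{X}_{\zeta})=\tfrac{1}{2}\log 2$ from elsewhere, and then matches it against the formula $h_{\mu}=-\tfrac{1}{2}\bigl(p\log p+(1-p)\log(1-p)\bigr)$ from the preceding proposition at $p=\tfrac{1}{2}$.

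Your approach instead unpacks the structure explicitly: you realise $(\mathbb{X}_{\zeta},S)$, modulo the period-two alternating orbit, as a height-two suspension over the full two-shift via the phase coding $\Psi$, and reduce intrinsic ergodicity of $\mathbb{X}_{\zeta}$ to that of $(\{0,1\}^{\Z},\sigma)$. This is more hands-on and more self-contained --- you only need the elementary case of Parry's theorem for the full shift rather than the general sofic result --- and it yields the topological entropy and an explicit description of the maximising measure (the lift of the $(\tfrac12,\tfrac12)$-Bernoulli measure) as by-products. The paper's proof is shorter but leans on a heavier black box. One small wrinkle: your dichotomy ``either $\mu=\tfrac12(\delta_z+\delta_{Sz})$ or $\mu(\{z,Sz\})=0$'' is only valid for ergodic $\mu$; since you later restrict to ergodic maximisers anyway this is harmless, but you might tidy the phrasing.
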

\begin{proof}
\begin{figure}
\centering
\begin{tikzpicture} 
  \SetGraphUnit{3}
  \Vertex{2}
  \WE(2){1}
  \EA(2){3}
  \Edge[label = $a$, color = red](1)(2)
  \Edge[label = $a$, color = red](2)(3)
  \Edge[label = $b$, color = blue](3)(2)
  \Edge[label = $b$, color = blue](2)(1)
  \tikzset{EdgeStyle/.append style = {bend left = 50}}
\end{tikzpicture}
\caption{The right-resolving graph representation of $X_\zeta$ as a sofic shift.}
\label{fig:sofic}
\end{figure}
It is not difficult to see that the subshift generated by the graph in Figure~\ref{fig:sofic} coincides with $\mathbb{X}_{\zeta}$. Therefore, it is a sofic shift and thus intrinsically ergodic.

The topological entropy of $\mathbb{X}_{\zeta}$ is given by $h(\mathbb{X}_{\zeta}) = \frac{1}{2}\log(2)$ which follows from simple combinatorics on $\mathbb{X}_{\zeta}$; alternatively, see \cite{gohlke} for a criterion that allows to read off $h(\mathbb{X}_{\zeta})$ immediately from the form of the random substitution. Now, if $p=\frac{1}{2}$, the previous proposition implies that $h_{\mu}= \frac{1}{2}\log(2)$, which finishes the proof.
\end{proof}

Consequently, if $\boldsymbol{p}=(p,1-p)^{\intercal}$ is the uniform distribution, we obtain the measure of maximal entropy. This result also holds for a larger class of random substitutions. For this, consider the random substitution
\[
\vartheta: a_i \mapsto 
\begin{cases}
w^{1}, & \text{ with probability } p_1  \\
\ \ \vdots & \quad \quad \quad \quad \vdots  \\
w^{\ell}, & \text{ with probability } p_{\ell} 
\end{cases}  \quad \quad \text{ for all } a_i\in\mathcal{A},
\]
where $\ell\in\N$, $\boldsymbol{p}=(p_1,\ldots,p_{\ell})^{\intercal}$ is a probability vector, and $w^{1},\ldots, w^{\ell}$ are legal words of length $N\in\N$, such that $\vartheta$ is primitive and $|w^{j_1}|_{a_i}=|w^{j_2}|_{a_i}$ for all $j_1,j_2\in\{1,\ldots,\ell\}$ and for all $a_i \in \mc A$.

\begin{prop}  \label{thm:max_ent}
Let\/ $\vartheta$ be as above. Then,\/ $\mathbb{X}_{\vartheta}$ is intrinsically ergodic. Moreover, the frequency measure corresponding to the vector\/ $\boldsymbol{p}=(\frac{1}{\ell},\ldots,\frac{1}{\ell})^{\intercal}$ is the measure of maximal entropy.
\end{prop}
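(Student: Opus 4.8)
The plan is to follow the strategy of the previous proposition and to establish three facts: (i) $\mathbb{X}_{\vartheta}$ is intrinsically ergodic; (ii) $h(\mathbb{X}_{\vartheta})\leqslant\tfrac{1}{N}\log\ell$; and (iii) the frequency measure $\mu$ attached to the uniform vector $\boldsymbol{p}=(\tfrac{1}{\ell},\dots,\tfrac{1}{\ell})$ satisfies $h_{\mu}=\tfrac{1}{N}\log\ell$. By the variational principle, (ii) and (iii) together show that $\mu$ is a measure of maximal entropy, and (i) shows it is the unique one. Throughout we use that $w^{1},\dots,w^{\ell}$ are pairwise distinct, which is implicit in the statement: if two of them coincided, then $\mu$ would correspond to a non-uniform weighting of distinct words and could not maximise the entropy. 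For (i), note that since every letter $a_i$ is substituted according to the same law, $\hat{\vartheta}(a_i)=W:=\{w^{1},\dots,w^{\ell}\}$ for every $i$, and hence $\hat{\vartheta}^{n}(a_i)$ is the set of all concatenations of $N^{n-1}$ words from $W$, independently of $a_i$. Consequently $\mathcal{L}_{\vartheta}$ is exactly the set of factors of the language $W^{*}$ of finite concatenations of the $w^{j}$. As $W^{*}$ is regular and the set of factors of a regular language is regular, $\mathbb{X}_{\vartheta}$ is sofic; moreover it is topologically transitive, since the bi-infinite word obtained by concatenating an enumeration of $W^{*}$ has dense orbit. A topologically transitive sofic shift carries a unique measure of maximal entropy, so $\mathbb{X}_{\vartheta}$ is intrinsically ergodic.

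For (ii), any legal word of length $Nm$ occurs as a factor of a concatenation of at most $m+1$ of the words $w^{j}$; counting the at most $\ell^{m+1}$ such concatenations together with the at most $N$ possible offsets of the word within the first block gives $\#\mathcal{L}^{Nm}_{\vartheta}\leqslant N\ell^{m+1}$, so that $h(\mathbb{X}_{\vartheta})\leqslant\tfrac{1}{N}\log\ell$. For (iii), the zeroth-coordinate partition generates $\mathcal{B}_{\vartheta}$, so as in~\eqref{eq:met_ent} we have $h_{\mu}=\lim_{n\to\infty}\tfrac{1}{n}H\bigl(R^{(n)}\bigr)$, where $H\bigl(R^{(n)}\bigr)=-\sum_{w\in\mathcal{L}^{n}_{\vartheta}}R^{(n)}_{w}\log R^{(n)}_{w}$. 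The structural point is that, because all letters are substituted by the same random word and $\abs{\vartheta(a_i)}=N$, the entry $(M_{\vartheta_{n}})_{xu}=\mathbb{E}\abs{\vartheta_{n}(u)}_{x}=:m^{(n)}_{x}$ is independent of $u\in\mathcal{L}^{n}_{\vartheta}$. Hence $M_{\vartheta_{n}}$ has constant rows and Lemma~\ref{lem:eigenvalue} applies: its PF eigenvalue equals $\lambda=\sum_{x}m^{(n)}_{x}=\mathbb{E}\abs{\vartheta_{n}(u)}=N$ (the number of collared letters in $\vartheta_{n}(u)$ being $\abs{\vartheta(u_{1})}=N$), and $R^{(n)}_{x}=m^{(n)}_{x}/N$. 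Writing $\vartheta(u)=v^{1}\cdots v^{n}$ with the $v^{i}$ i.i.d.\ uniform on $W$, we obtain $m^{(n)}_{x}=\sum_{k=1}^{N}P_{k}(x)$, where $P_{k}(x):=\mathbb{P}\bigl[(v^{1}\cdots v^{n})_{[k,k+n-1]}=x\bigr]$; thus $R^{(n)}$ is the uniform mixture $\tfrac{1}{N}\sum_{k=1}^{N}P_{k}$ of the $N$ ``offset laws''.

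Specialising to $n=Nm$, for each fixed $k$ the random word $(v^{1}\cdots v^{Nm})_{[k,k+Nm-1]}$ decomposes, in a way determined by $k$ alone, into the $m-1$ full blocks $v^{2},\dots,v^{m}$ (independent and uniform on the $\ell$-element set $W$) together with a suffix of $v^{1}$ and a prefix of $v^{m+1}$, each of length $<N$. This decomposition is a bijection onto the possible values, so $H(P_{k})=m\log\ell+O(1)$ for each $k$. Concavity of the Shannon entropy then yields
\[
H\bigl(R^{(Nm)}\bigr)=H\Bigl(\tfrac{1}{N}\sum_{k=1}^{N}P_{k}\Bigr)\,\geqslant\,\tfrac{1}{N}\sum_{k=1}^{N}H(P_{k})\,=\,m\log\ell+O(1),
\]
while $H\bigl(R^{(Nm)}\bigr)\leqslant\log\#\mathcal{L}^{Nm}_{\vartheta}\leqslant m\log\ell+O(1)$ by (ii). Hence $H\bigl(R^{(Nm)}\bigr)=m\log\ell+O(1)$, and $h_{\mu}=\lim_{m\to\infty}\tfrac{1}{Nm}\bigl(m\log\ell+O(1)\bigr)=\tfrac{1}{N}\log\ell$.

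The step that matters most is recognising that $M_{\vartheta_{n}}$ has constant rows: this collapses $R^{(n)}$ onto $\tfrac{1}{N}$ times the vector of expected $n$-window counts and exhibits $\mu$ as a uniform mixture of $N$ shifted copies of an i.i.d.\ $W$-tiling law, after which the lower bound for $h_{\mu}$ drops out of entropy concavity. In contrast to the proof of the previous proposition, no delicate cancellation estimate (the term $\mathrm{Err}_{2n}$ there) is required; the only additional ingredient is the distinctness of the $w^{j}$, which is also what makes the statement about the uniform vector correct.
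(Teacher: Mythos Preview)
Your proof is correct and shares its scaffolding with the paper's: both establish soficity (hence intrinsic ergodicity), invoke Lemma~\ref{lem:eigenvalue} to get $R^{(n)}_x=m^{(n)}_x/N$ from the constant-row structure of $M_{\vartheta_n}$, and decompose $m^{(Nm)}_x=\sum_{k=1}^{N}P_k(x)$ into the $N$ offset laws. The divergence is in the last step. The paper bounds each nonzero $P_k(x)$ pointwise between $\ell^{-(m+1)}$ and $\ell^{-(m-1)}$, deduces that $R^{(Nm)}_x$ is uniformly comparable (with constants independent of $m$) to $1/\lvert\mathcal{L}^{Nm}_{\vartheta}\rvert$, and reads off $h_{\mu}\geqslant h(\mathbb{X}_{\vartheta})$ directly. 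You instead compute $H(P_k)=m\log\ell+O(1)$ from the block decomposition and use concavity of Shannon entropy on the mixture $R^{(Nm)}=\tfrac{1}{N}\sum_k P_k$, together with the crude count $\lvert\mathcal{L}^{Nm}_{\vartheta}\rvert\leqslant N\ell^{m+1}$, to sandwich $H(R^{(Nm)})$. Your route is a little more conceptual and avoids the pointwise comparability estimate; the paper's route yields that stronger comparability fact as a byproduct (essentially that $\mu$ is a measure of maximal type, not just maximal entropy). Both arguments tacitly need the $w^{j}$ pairwise distinct, which you flag explicitly.
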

\begin{proof}
The subshift $\mathbb{X}_{\vartheta}$ is a sofic shift, and therefore it is intrinsically ergodic; compare the previous proposition. 

By Eq.~\eqref{eq:met_ent}, the metric entropy can be computed via 
\[
h_{\mu} \, = \, \lim_{n\to\infty} -\frac{1}{n} \sum_{i=1}^{|\mathcal{L}_{\vartheta}^n|} \frac{1}{\lambda} m_i^{(n)} \cdot \log\left(\frac{1}{\lambda}m_i^{(n)}\right) 
 \, = \, \lim_{n\to\infty} -\frac{1}{n} \sum_{i=1}^{|\mathcal{L}_{\vartheta}^n|} \frac{1}{N} m_i^{(n)}\cdot \log \left(\frac{1}{N}m_i^{(n)}\right), 
\]
since $\lambda=N$ is the PF eigenvalue of $M_{\vartheta}$. Let us consider the subsequence $(Nn)_{n\in\N}$ of $(n)_{n\in\N}$. Let $u=u_1\ldots u_{Nn}\in \mathcal{L}_{\vartheta}^{Nn}$. Then, we have for all $\omega \in \Omega$
\[
\vartheta(u)(\omega) \, = \, \mathop{\odot}\limits_{j=1}^{Nn} v^{j}, \quad \quad v^{j} \in \{w^{1},\ldots,w^{\ell}\}
\]
by the inflation-word structure and thus
\[
\vartheta_{Nn}(u)(\omega) \, = \, (v^{1}\ldots v^{n})\, (v^{1}_{[2,N]}v^{2}\ldots v^{n}v_1^{n+1})\ldots (v_{N}^{1}v^{2}\ldots v^{n}v^{n+1}_{[1,N-1]}).
\]
due to the fact that all $v^{j}$ have the same length $N$.
Analogously to Eq.~\eqref{eq:7-ex-splitting}, we obtain
\[
m_i^{(Nn)} \, = \, \sum_{k = 1}^{N} \mathbb{P} \left[\vartheta(u)_{[k,k+Nn-1]} = u_i \right].
\]
Since every inflation word $w^{j}$, $j = 1,\ldots,\ell$ appears with the same probability $\frac{1}{\ell}$, we find that, for all $1 \leqslant k \leqslant N$ and $u_i \in \mc L^n_{\vartheta}$, the term $\mathbb{P} \left[ \vartheta(u)_{[k,k+Nn-1]} = u_i \right]$ is either $0$ or
\[
\ell^{-2} \ell^{-(n-1)} \, \leqslant \, \mathbb{P} \left[\vartheta(u)_{[k,k+Nn-1]} = u_i \right] \leqslant \ell^{-(n-1)}.
\]
Also, since $u_i$ is legal, at least one of the terms needs to be non-zero. This implies
\begin{equation} \label{eq:m_in}
\frac{1}{N} \ell^{-2} \ell^{-(n-1)} \, \leqslant \, \frac{1}{N} m_i^{(Nn)} \, \leqslant \, \ell^{-(n-1)}
\end{equation}
and 
\[
1 \, = \, \sum_{i=1}^{|\mathcal{L}_{\vartheta}^{Nn}|} \frac{1}{N} m_i^{(Nn)} 
\, \in \, \Bigl[|\mathcal{L}_{\vartheta}^{Nn}| \frac{1}{N}  \ell^{-2}\ell^{-(n-1)}, |\mathcal{L}_{\vartheta}^{Nn}| \ell^{-(n-1)}\Bigr], 
\]
which leads to
\[
\frac{1}{N}  \ell^{-2}\ell^{-(n-1)} \, \leqslant \, \frac{1}{|\mathcal{L}_{\vartheta}^{Nn}|} \, \leqslant \, \ell^{-(n-1)}.
\]
This together with Eq.~\eqref{eq:m_in} gives
\[
c_1 \frac{1}{|\mathcal{L}_{\vartheta}^{Nn}|} \, \leqslant \, \frac{1}{N} m_i^{(Nn)} \, \leqslant \, c_2 \frac{1}{|\mathcal{L}_{\vartheta}^{Nn}|}, 
\]
where $c_1:=\frac{1}{N\ell^2}$ and $c_2:=N\ell^2$. That is, the entries of the right eigenvector $R^{(Nn)}$ can differ from the uniform distribution only by a non-zero factor, which can be chosen \emph{independent} of $n$. This is enough to conclude that the metric entropy coincides with the topological entropy. More precisely, we have
\[
\begin{split}
h_{\mu}
     &\, = \,\lim_{n\to\infty} -\frac{1}{Nn} \sum_{i=1}^{|\mathcal{L}_{\vartheta}^{Nn}|} \frac{1}{N} m_i^{(Nn)}
         \log\left(\frac{1}{N} m_i^{(Nn)}\right)       \\
     & \, \geqslant \, \lim_{n\to\infty} -\frac{1}{Nn} \sum_{i=1}^{|\mathcal{L}_{\vartheta}^{Nn}|} 
       \frac{1}{N} m_i^{(Nn)}\log\left(c_1 \frac{1}{|\mathcal{L}_{\vartheta}^{Nn}|}\right)  \\
     &\, = \, \lim_{n\to\infty} -\frac{1}{Nn}\log\left(\frac{1}{|\mathcal{L}_{\vartheta}^{Nn}|}\right)  \, = \, h(\mathbb{X}_{\vartheta}),
\end{split} 
\]
i.e. $h_{\mu}\geqslant h(\mathbb{X}_{\vartheta})$. Since one always has $h_{\mu}\leqslant h(\mathbb{X}_{\vartheta})$, the claim follows. 
\end{proof}

Similarly as in Corollary~\ref{coro:intrinsic-ergodic}, this allows us to conclude
\begin{coro}
Let\/ $\vartheta$ be as above, and denote by\/ $\mu$ the measure of maximal entropy, i.e. the frequency measure that corresponds to the vector\/ $\boldsymbol{p}=(\frac{1}{\ell},\ldots,\frac{1}{\ell})^{\intercal}$. Then,
\[
h_{\mu} \, = \, \frac{1}{N} \, \log(\ell).
\]
\end{coro}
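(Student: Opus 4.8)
The plan is to read the value of $h_{\mu}$ straight off the proof of Proposition~\ref{thm:max_ent}. In that proof it is shown that, for the uniform probability vector $\boldsymbol{p}=(\tfrac{1}{\ell},\ldots,\tfrac{1}{\ell})^{\intercal}$, the frequency measure $\mu$ satisfies $h_{\mu}=h(\mathbb{X}_{\vartheta})$. So it only remains to evaluate the topological entropy $h(\mathbb{X}_{\vartheta})$, and this I would do by a direct counting argument on the word complexity, reusing the estimates already obtained.

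Concretely, I would recall the two-sided bound on the number of legal words of length $Nn$ derived in the proof of Proposition~\ref{thm:max_ent}, namely
\[
\frac{1}{N}\,\ell^{-2}\ell^{-(n-1)} \,\leqslant\, \frac{1}{|\mathcal{L}_{\vartheta}^{Nn}|} \,\leqslant\, \ell^{-(n-1)},
\]
which rearranges to $\ell^{n-1}\leqslant |\mathcal{L}_{\vartheta}^{Nn}|\leqslant N\ell^{n+1}$. Taking logarithms, dividing by $Nn$ and letting $n\to\infty$ gives $\tfrac{1}{Nn}\log|\mathcal{L}_{\vartheta}^{Nn}|\to\tfrac{1}{N}\log(\ell)$. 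Since the complexity function $n\mapsto\log|\mathcal{L}_{\vartheta}^{n}|$ is subadditive, Fekete's lemma guarantees that $\lim_{n\to\infty}\tfrac{1}{n}\log|\mathcal{L}_{\vartheta}^{n}|$ exists and therefore agrees with its limit along the subsequence $(Nn)_{n\in\N}$; that is, $h(\mathbb{X}_{\vartheta})=\tfrac{1}{N}\log(\ell)$. Combining this with the identity $h_{\mu}=h(\mathbb{X}_{\vartheta})$ from Proposition~\ref{thm:max_ent} yields the claim.

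I do not anticipate any genuine obstacle here: the corollary is essentially bookkeeping on estimates already established. The only point worth spelling out is the passage from the subsequence $(Nn)_{n\in\N}$ to the full sequence, handled by subadditivity of the complexity function as above; alternatively, one can avoid it altogether by observing that the displayed chain of equalities at the end of the proof of Proposition~\ref{thm:max_ent} already evaluates $\lim_{n\to\infty}-\tfrac{1}{Nn}\log\bigl(1/|\mathcal{L}_{\vartheta}^{Nn}|\bigr)$, which the bounds above identify with $\tfrac{1}{N}\log(\ell)$.
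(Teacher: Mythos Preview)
Your proposal is correct and follows essentially the same approach as the paper, which does not give an explicit proof but simply notes that the corollary follows ``similarly as in Corollary~\ref{coro:intrinsic-ergodic}'' from Proposition~\ref{thm:max_ent}. Your argument is in fact more self-contained: where the paper appeals to ``simple combinatorics'' or the reference \cite{gohlke} to obtain the topological entropy, you extract the value $h(\mathbb{X}_{\vartheta})=\tfrac{1}{N}\log(\ell)$ directly from the two-sided bound on $|\mathcal{L}_{\vartheta}^{Nn}|$ already established in the proof of Proposition~\ref{thm:max_ent}, together with subadditivity to pass from the subsequence $(Nn)_{n\in\N}$ to the full limit.
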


\begin{remark}
Proposition~\ref{thm:max_ent} gives an alternative characterisation of the measure of maximal entropy for a certain class of RS-subshifts, which are sofic shifts. Numerical computations suggest that this result can be extended to include non-sofic subshifts, such as the RS-subshift which arises from the random Fibonacci substitution 
\[
\vartheta_{\text{Fib}}:\ a\mapsto 
\begin{cases}
ba, & \text{ with probability } p \\
ab, &  \text{ with probability } 1-p
\end{cases},  \quad b\mapsto a.
\]
The difficulty in this situation is the computation of the normalised right PF eigenvectors\/ $R^{(n)}$ for large\/ $n$.
\end{remark}

\section{Appendix}

In this section we will make explicit a possible choice of the probability spaces involved in the construction of a random substitution $\vartheta$ and the corresponding Markov chain. Let us define  
$
\mc I_0 = \{ \langle a p \rangle \Mid a \in \mc A, p \in \mc \N \}. 
$
We call an element $I = \langle a p \rangle \in \mc I_0$ an \emph{individual} of type $a$ and position $p$. To each individual $I$ we attach a copy of $\mc A^+$ and define the product space $\Omega_2 = \bigtimes_{I \in \mc I_0} (\mc A^+)_I$. We equip $\mc A^+$ with the discrete topology, $\Omega$ with the corresponding product topology and define $\mc F_2$ to be the sigma-algebra of Borel sets. If $I = \langle ap \rangle$, we define $P_I = P^{\vartheta}_a$ and from this, we construct the product measure $\mathbb P_2 = \bigotimes_{I \in \mc I_0} P_I$. This yields a probability space $(\Omega_2, \mc F_2, \mathbb P_2)$. Now, suppose $\mc U$ is a random word on $(\Omega_1, \mc F_1, \mathbb P_1)$ and define $(\Omega, \mc F, \mathbb P) = (\Omega_1 \times \Omega_2, \mc F_1 \otimes \mc F_2, \mathbb P_1 \otimes \mathbb P_2)$. For a given $ \omega = (\omega_1, \omega_2) \in \Omega$, let $u=u_1 \cdots u_m = \mc U(\omega) = \mc U(\omega_1)$. From this, we define a subset of \emph{selected individuals} $\mc I_0^{\omega} = \{ \langle u_1 \, 1 \rangle, \ldots, \langle u_m \, m \rangle \}$. For each $j \in \{1,\ldots,m\}$, $\omega_{\langle u_j \, j\rangle}$ is a component of $\omega_2 = (\omega_I)_{I \in \mc I_0}$. Finally,
\[
\vartheta(\mc U)(\omega) := \omega_{\langle u^{}_1 \, 1\rangle} \cdots \omega_{\langle u^{}_m \, m \rangle} \in \mc A^+.
\] 
\begin{lemma}
\label{Lemma:App-conditional-prob}
The random words $\mc U$ and $\vartheta(\mc U)$ on $(\Omega, \mc F, \mathbb P)$ as defined above satisfy
\[
\mathbb P [\vartheta(\mc U) = v \mid \mc U = u ] = P(u,v),
\]
for all $u, v \in \mc A^+$.
\end{lemma}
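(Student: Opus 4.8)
The plan is to condition on the value of $\mathcal{U}$ and then exploit the product structure of $(\Omega_2,\mc F_2,\mathbb{P}_2)$. First I would fix $u = u_1 \cdots u_m \in \mc A^+$; if $\mathbb{P}[\mathcal{U}=u]=0$ there is nothing to prove, so assume $\mathbb{P}[\mathcal{U}=u]>0$. Since $\mathcal{U}$ was extended to $\Omega = \Omega_1 \times \Omega_2$ by $\mathcal{U}(\omega_1,\omega_2) := \mathcal{U}(\omega_1)$, the event $\{\mathcal{U}=u\}$ equals $A_u \times \Omega_2$ with $A_u = \mathcal{U}^{-1}(\{u\}) \subseteq \Omega_1$, and hence $\mathbb{P}[\mathcal{U}=u] = \mathbb{P}_1(A_u)$. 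On this event the definition of $\vartheta(\mathcal{U})$ reads $\vartheta(\mathcal{U})(\omega_1,\omega_2) = \omega_{\langle u_1\,1\rangle} \cdots \omega_{\langle u_m\,m\rangle}$, which no longer depends on $\omega_1$ and depends on $\omega_2$ only through the coordinates indexed by the $m$ individuals $\langle u_1\,1\rangle, \ldots, \langle u_m\,m\rangle$. These individuals are pairwise distinct because their positions $1,\ldots,m$ are distinct (even when some letters coincide). As a byproduct this also settles measurability of $\vartheta(\mathcal{U})$, since it agrees with a fixed measurable map on each member of the countable partition $\{\{\mathcal{U}=u\}\}_{u \in \mc A^+}$ of $\Omega$.

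Next I would carry out the conditional probability computation. For any $v \in \mc A^+$, applying Tonelli's theorem on the product space $(\Omega_1 \times \Omega_2,\,\mathbb{P}_1 \otimes \mathbb{P}_2)$ to the non-negative integrand $\mathbf 1_{A_u}(\omega_1)\,\mathbf 1[\omega_{\langle u_1\,1\rangle}\cdots\omega_{\langle u_m\,m\rangle}=v](\omega_2)$ gives
\[
\mathbb{P}[\vartheta(\mathcal{U}) = v,\ \mathcal{U} = u] \, = \, \mathbb{P}_1(A_u) \cdot \mathbb{P}_2\bigl[\omega_{\langle u_1\,1\rangle} \cdots \omega_{\langle u_m\,m\rangle} = v\bigr].
\]
Dividing by $\mathbb{P}[\mathcal{U}=u] = \mathbb{P}_1(A_u)$ reduces the claim to the identity $\mathbb{P}_2\bigl[\omega_{\langle u_1\,1\rangle} \cdots \omega_{\langle u_m\,m\rangle} = v\bigr] = P(u,v)$.

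For that final identity I would simply unravel the product measure: because $\mathbb{P}_2 = \bigotimes_{I \in \mc I_0} P_I$ and the indices $\langle u_1\,1\rangle, \ldots, \langle u_m\,m\rangle$ are distinct, the random vector $\bigl(\omega_{\langle u_1\,1\rangle}, \ldots, \omega_{\langle u_m\,m\rangle}\bigr)$ has law $\bigotimes_{j=1}^m P_{\langle u_j\,j\rangle} = \bigotimes_{j=1}^m P^{\vartheta}_{u_j}$, by the definition $P_{\langle a p\rangle} = P^{\vartheta}_a$. In other words these coordinates form independent random words with distributions $P^{\vartheta}_{u_1}, \ldots, P^{\vartheta}_{u_m}$, so by Remark~\ref{Rem:trans-kernel} their concatenation has distribution $P(u_1 \cdots u_m, \cdot) = P(u, \cdot)$; equivalently, one expands directly
\[
\mathbb{P}_2\bigl[\omega_{\langle u_1\,1\rangle} \cdots \omega_{\langle u_m\,m\rangle} = v\bigr] \, = \, \sum_{\substack{v^1, \ldots, v^m \in \mc A^+ \\ v^1 \cdots v^m = v}} P^{\vartheta}_{u_1}(v^1) \cdots P^{\vartheta}_{u_m}(v^m) \, = \, P(u,v),
\]
which is precisely the definition of the transition kernel. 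This completes the argument, and it also shows that the explicit $\vartheta$ of the Appendix satisfies \eqref{Eq:RS-def}, hence that a random substitution always exists.

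There is no substantial obstacle here; the only point that needs care is the bookkeeping in the first paragraph, namely the observation that on the event $\{\mathcal{U}=u\}$ the random word $\vartheta(\mathcal{U})$ collapses to a function of finitely many, \emph{pairwise distinct}, product coordinates. The distinctness — forced by the distinct positions $1, \ldots, m$ attached to the selected individuals — is exactly what makes the coordinates independent under $\mathbb{P}_2$ and lets the product-measure computation reproduce the transition kernel. One should also not forget to dispatch the trivial case $\mathbb{P}[\mathcal{U}=u]=0$, where the conditional probability is vacuous.
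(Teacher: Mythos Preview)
Your proof is correct and follows essentially the same approach as the paper's: condition on $\mc U = u$, observe that $\vartheta(\mc U)$ then depends only on the distinct coordinates $\omega_{\langle u_j\, j\rangle}$, and use the product structure of $\mathbb P_2$ to recover the defining sum for $P(u,v)$. Your version is more carefully written---you explicitly handle the trivial case $\mathbb P[\mc U=u]=0$, justify the factorisation via Tonelli, and note the distinctness of the selected individuals---whereas the paper compresses these into the single phrase ``since $\mathbb P$ is a product measure''.
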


\begin{proof}
Assume $\omega \in \Omega$ such that $\mc U(\omega) = u = u_1 \cdots u_m$. Then, $\vartheta(\mc U)(\omega) = \omega_{\langle u^{}_1 \, 1\rangle} \cdots \omega_{\langle u^{}_m \, m \rangle}$, by definition. Since $\mathbb P$ is a product measure, we obtain
\begin{align*}
\mathbb P [\vartheta(\mc U) = v \mid \mc U = u ]
& = \mathbb P [\omega_{\langle u^{}_1 \, 1\rangle} \cdots \omega_{\langle u^{}_m \, m \rangle} = v] = \sum_{ v^1 \cdots v^m = v} \mathbb P[\omega_{\langle u_1 \, 1 \rangle} = v^1] \cdots \mathbb P[\omega_{\langle u_m \, m \rangle} = v^m]
\\ & = \sum_{ v^1 \cdots v^m = v} P^{\vartheta}_{u_1}(v^1) \cdots P^{\vartheta}_{u_m}(v^m) 
= P(u,v),
\end{align*}
where we have used $\sum_{v^1 \cdots v^m = v}$ as a shorthand notation for $\sum_{\substack{v^1, \ldots, v^m \in \mc A^+ \\ v^1 \cdots v^m = v}}$.
\end{proof}

In order to iterate $\vartheta$ we might in principle always take the product with the same probability space $(\Omega_2, \mc F_2, \mathbb P_2)$, outlined above. However, it will prove convenient to equip the individuals in subsequent `generations' with a bit more structure.
The following construction is along the same lines as for the multitype Galton--Watson process, compare \cite[Ch.~1.2]{mode}. We focus on the case that we start from a deterministic word $u = u_1 \cdots u_m \in \mc A^+$. We form a set of \emph{initial individuals} $\mc I_0 = \{ \langle u_1 \, 1 \rangle, \ldots, \langle u_m \, m \rangle \}$. 
An \emph{individual of generation $n$} is a tuple $I = \langle a_0 p_0 a_1 p_1 ... a_n p_n \rangle$ with $n \in \mathbb N$, $\langle a_0 p_0 \rangle \in \mc I_0$, $a_i \in \mc A$ and $p_i \in \mathbb{N}$ for all $1 \leqslant i \leqslant n$. We call $a(I) := a_n$ the type of the individual and $p_n$ its position. 
We say that $I = \langle a_0 p_0 \cdots a_k p_k \rangle$ is an ancestor of $J = \langle a'_0 p'_0 \cdots a'_{k+s} p'_{k+s} \rangle$ if $a_j = a'_j$ and $p_j = p'_j$ for all $1\leqslant j \leqslant k$. 
For example, we want to think of $\langle a_0 p_0 a_1 p_1 \rangle$ as an individual of type $a_1$ produced from $\langle a_0 p_0 \rangle$ and occurring at position $p_1$ in the word that $\langle a_0 p_0 \rangle$ is substituted with. 
The collection of all possible individuals forms a countable set
\[
\mc I = \bigcup_{n \geqslant 0} \mc I_n, \quad \mc I_n = \{ \langle a_0 p_0 \cdots a_n p_n \rangle \Mid \langle a_0 p_0 \rangle \in \mc I_0,  a_i \in \mc A, \, p_i \in \mathbb{N} \; \mbox{for all } 1 \leqslant i \leqslant n \}.
\]
From this, we define
$
\Omega_u = \bigtimes_{I \in \mc I} (\mc A^+)_I
$
with Borel sigma-algebra $\mc B_u$ and probability measure $\mathbb{P}_u = \bigotimes_{I \in \mc I} P_I$. Again, $P_I = P^{\vartheta}_{a}$, where $a$ is the type of $I$.
On finite cylinders of the form $B = \prod_{I} B_I$ where $B_I \neq \mc A^+$ only for $I$ in a finite subset $\mc J \subset \mc I$,  this measure is given by
\[
\mathbb{P} \biggl( \prod_{I} B_I \biggr) = \prod_{I \in \mc J} P_I (B_I).
\]
In the process that we want to model, only a finite subset of $\mc I_n$ is produced from the previous generation in each step. For $\omega = (\omega_I)_{I \in \mc I} \in \Omega_u$, the set of selected individuals $\mc I_n^{\omega} \subset \mc I_n$ for each generation $n$ is defined inductively. First, $\mc I_0^{\omega} \equiv \mc I_0$, reflecting the fact that we start from a deterministic word $u$. Suppose $I = \langle a_0 p_0 \cdots a_n p_n \rangle \in \mc I_n^{\omega}$ for some $n \in \N_0$ and $\omega_I = v_1 \cdots v_k$, with $k \in \mathbb N$. The set of all individuals produced by $I$ is given by
$
A_I = \{ \langle a_0 p_0 \cdots a_n p_n v_j j \rangle \Mid j \in \{1,\ldots, k\} \}
$ and we define $\mc I_{n+1}^{\omega} = \cup_{I \in \mc I_n^{\omega}} A_I$. 
\\ In the next step, we construct a word from each of the sets $\mc I_n^{\omega}$ that we call a \emph{level-$n$ inflation word} of the initial word $u$. For $n \in \mathbb N_0$, we define a strict total order relation $<$ on $\mc I_n^{\omega}$ as follows. We set $\langle a_0 p_0 \cdots a_n p_n \rangle < \langle a'_0 p'_0 \cdots a'_n p'_n \rangle$ if there is an $i \in \{0,\ldots,n\}$ such that $p_i < p'_i$ and $p_j = p'_j$ for all $j < i$. 
Suppose $I_n^1 < I_n^2 < \ldots < I_n^r$ is the ordering of the individuals in  $\mc I_n^{\omega}$. 
Then, for every $n \in \mathbb N_0$,
\[
\vartheta^{n+1}(u) \colon \Omega_u \to \mc A^+, \quad \vartheta^{n+1}(u)(\omega) : = \omega_{I_n^1} \cdots \omega_{I_n^r}.
\]
Note that we have suppressed the dependence of $I_n^j$ on $\omega$ for notational convenience. It is worth noticing that if $\vartheta^{n+1}(u)(\omega) = v_1 \cdots v_s$, then $s = \card{\mc I_{n+1}^{\omega}}$ and $v_j$ is the type of $I_{n+1}^j$ for $j \in \{1,\ldots,s\}$.

\begin{lemma}
The sequence $(\vartheta^n(u))_{n \in \mathbb N}$ forms a stationary Markov chain with transition kernel
\[
\mathbb P_u [\vartheta^{n+1}(u) = w | \vartheta^n(u) = v] = P(v,w),
\]
for all $n \in \mathbb{N}$ and $v,w \in \mc A^+$.
\end{lemma}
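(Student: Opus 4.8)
The plan is to exploit that $(\Omega_u,\mc B_u,\mathbb P_u)$ is a product measure over the individuals in $\mc I=\bigsqcup_{k\geqslant 0}\mc I_k$, and that the coordinates attached to generation-$n$ individuals are ``fresh'' relative to everything that determines $\vartheta^1(u),\ldots,\vartheta^n(u)$. First I would pin down the dependence structure. Introducing the splitting $\Omega_u=\Omega_u^{<n}\times\Omega_u^{\geqslant n}$ according to $\mc I_0\cup\cdots\cup\mc I_{n-1}$ versus $\mc I_n\cup\mc I_{n+1}\cup\cdots$, with projection $\pi_n\colon\Omega_u\to\Omega_u^{<n}$ and product decomposition $\mathbb P_u=\mathbb P_u^{<n}\otimes\mathbb P_u^{\geqslant n}$, I claim that the set of selected individuals $\mc I_n^\omega\subseteq\mc I_n$ and the words $\vartheta^k(u)$ for $1\leqslant k\leqslant n$ are all functions of $\pi_n(\omega)$ only. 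This follows by induction from the inductive definition $\mc I_{k+1}^\omega=\bigcup_{I\in\mc I_k^\omega}A_I$, with $A_I\subseteq\mc I_{k+1}$ depending only on $\omega_I$, together with the formula $\vartheta^{k+1}(u)(\omega)=\omega_{I_k^1}\cdots\omega_{I_k^r}$ for $\mc I_k^\omega=\{I_k^1<\cdots<I_k^r\}$. Thus there are measurable maps $f_1,\ldots,f_n$ and $G$ with $\vartheta^k(u)=f_k\circ\pi_n$ and $\mc I_n^\omega=G(\pi_n(\omega))$, and the $j$-th letter of $\vartheta^n(u)(\omega)$ is the type of the $j$-th element of $\mc I_n^\omega$ in the order $<$.

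Next I would compute the joint law. Fix $n\geqslant 1$ and $v_1,\ldots,v_n,w\in\mc A^+$ with $\mathbb P_u[\vartheta^1(u)=v_1,\ldots,\vartheta^n(u)=v_n]>0$. For $\eta\in\Omega_u^{<n}$ the set $G(\eta)=\{J_1<\cdots<J_r\}\subseteq\mc I_n$ is fixed, its types $a_1,\ldots,a_r$ spell out $f_n(\eta)$, and for $\omega=(\eta,\omega')$ one has $\vartheta^{n+1}(u)(\omega)=\omega'_{J_1}\cdots\omega'_{J_r}$; under $\mathbb P_u^{\geqslant n}$ the coordinates $\omega'_{J_1},\ldots,\omega'_{J_r}$ are independent with laws $P^\vartheta_{a_1},\ldots,P^\vartheta_{a_r}$ because $J_1,\ldots,J_r$ are distinct members of $\mc I_n$. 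By Remark~\ref{Rem:trans-kernel}, the conditional law of $\vartheta^{n+1}(u)$ given $\pi_n=\eta$ is $P(a_1\cdots a_r,\cdot)=P(f_n(\eta),\cdot)$. Integrating against $\mathbb P_u^{<n}$ and using Fubini,
\[
\mathbb P_u\big[\vartheta^1(u)=v_1,\ldots,\vartheta^n(u)=v_n,\ \vartheta^{n+1}(u)=w\big]
=\int \mathbbm 1_{\{f_1(\eta)=v_1,\ldots,f_n(\eta)=v_n\}}\,P(f_n(\eta),w)\,\diff\mathbb P_u^{<n}(\eta),
\]
which, since the kernel factor equals the constant $P(v_n,w)$ on $\{f_n(\eta)=v_n\}$, is $P(v_n,w)\cdot\mathbb P_u[\vartheta^1(u)=v_1,\ldots,\vartheta^n(u)=v_n]$. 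Dividing yields $\mathbb P_u[\vartheta^{n+1}(u)=w\mid\vartheta^n(u)=v_n,\ldots,\vartheta^1(u)=v_1]=P(v_n,w)$ for every $n\geqslant 1$ and all admissible $v_1,\ldots,v_n,w$; since this depends neither on the past before time $n$ nor on $n$ itself, $(\vartheta^n(u))_{n\in\N}$ is a time-homogeneous --- i.e.\ stationary in the terminology of Section~\ref{sec:rand_subst} --- Markov chain with transition kernel $P$. That the chain starts correctly, $\vartheta^1(u)\sim P(u,\cdot)$, is immediate from $\mc I_0=\{\langle u_1\,1\rangle,\ldots,\langle u_m\,m\rangle\}$ and Lemma~\ref{Lemma:App-conditional-prob}.

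I expect the main obstacle to be entirely of a bookkeeping nature: carefully verifying that $\mc I_n^\omega$ is (a) determined by the generation-$(<n)$ coordinates $\pi_n(\omega)$ and (b) composed of individuals that are genuinely of generation $n$, so that the coordinates it indexes are independent of $\pi_n$. Both facts are direct consequences of the inductive definition of $\mc I_k^\omega$ and of $A_I\subseteq\mc I_{k+1}$ for $I\in\mc I_k$, but they are exactly what converts the product structure of $\mathbb P_u$ into the Markov property; once they are in hand, the remainder is a routine Fubini computation together with the description of the transition kernel in Remark~\ref{Rem:trans-kernel}.
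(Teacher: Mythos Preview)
Your proposal is correct and follows essentially the same approach the paper has in mind: the paper does not spell out a proof here but simply states that it is analogous to Lemma~\ref{Lemma:App-conditional-prob} and leaves it to the reader, i.e.\ one exploits the product structure of $\mathbb P_u$ over individuals to see that the generation-$n$ coordinates are independent of the earlier ones and then computes exactly as in that lemma. Your write-up is in fact more thorough than what the paper provides, since you explicitly verify the full Markov property (conditioning on the entire history $\vartheta^1(u),\ldots,\vartheta^n(u)$ rather than only on $\vartheta^n(u)$) and carefully isolate the measurability/dependence bookkeeping via the splitting $\Omega_u=\Omega_u^{<n}\times\Omega_u^{\geqslant n}$.
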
 

The proof is analogous to the one presented for Lemma~\ref{Lemma:App-conditional-prob} and is left to the interested reader.

\begin{example}
\label{EX:RFib-p-space}
Consider the random Fibonacci substitution rule on $\mc A = \{a,b\}$, given by $P_b^{\vartheta} = \delta_a$ and $P_a^{\vartheta} = p \delta_{ab} + q \delta_{ba}$, with $p \in (0,1)$ and $q = 1-p$. Let us start from the initial word $u = a$, so $\mc I_0 = \{ \langle a \, 1 \rangle \}$. Suppose $\omega = (\omega_I)_{I \in \mc I} \in \Omega_a$, with $\omega_{\langle a \,1 \rangle} = ab$. Then, $\mc I_1^{\omega} = \{ \langle a 1 a 1\rangle , \langle a 1 b 2 \rangle \}$. These are ordered as $\langle a1 a1 \rangle < \langle a1 b2 \rangle$.
Suppose further $\omega_{\langle a1 a1 \rangle} = ab$ and $\omega_{\langle a1 b2 \rangle } = a$. Consequently, $\vartheta^2(a)(\omega) = aba$; compare Figure~\ref{Fig:omega-fib} for a graphical interpretation.
On the other hand $\omega' = (\omega'_I)_{I \in \mc I} \in \Omega$ with $\omega'_{\langle a 1 \rangle} = ba$, $\omega'_{\langle a1 b1 \rangle} = a$, $\omega'_{\langle a1 a2 \rangle} = ba$ yields again $\vartheta^2(a)(\omega') = aba$. Since these are the only ways to obtain the word $aba$ in two steps with non-vanishing probability, we obtain
\begin{align*}
\mathbb P_a [\vartheta^2(a) = aba ] &= \mathbb P_a [\omega_{\langle a 1 \rangle} = ab \wedge \omega_{\langle a1 a1 \rangle} = ab \wedge \omega_{\langle a1 b2 \rangle } = a] 
\\ &\quad + \mathbb P_a [\omega_{\langle a1 \rangle} = ba \wedge \omega_{\langle a1 b1 \rangle} = a \wedge \omega_{\langle a1 a2 \rangle } = ba] 
= p^2 + q^2.
\end{align*}
\end{example}

\begin{figure}
\label{Fig:omega-fib}
\begin{tikzpicture}
\node at (-3.8,1.5) {$\mc I_1^{\omega}$};
\node at (-3.8,0) {$\mc I_2^{\omega}$};
\node at (0,3) {$\langle a 1 \rangle$};
\node at (0,1.5) {$\langle a1 {\color{red}a} 1 \rangle, \langle  a1 {\color{red}b}2 \rangle$};
\node at (-1.4,0) {$\langle a1 a1 {\color{red}a}1 \rangle, \langle a1 a1 {\color{red}b}2 \rangle,$};
\node at (1.4,0) {$\langle a1 b2 {\color{red}a}1 \rangle $};
\draw[->] (0,2.6) -- (0,1.9);
\draw[->] (-0.6, 1.1) -- (-1.3 , 0.4);
\draw[->] (0.7, 1.1) -- (1.4 , 0.4);
\node at (4,3) {$a$};
\node at (4,1.5) {$ab$};
\node at (3.5,0) {$ab$};
\node at (4.5,0) {$a$};
\draw[->] (4,2.6) -- (4,1.9);
\draw[->] (3.85, 1.1) -- (3.6 , 0.4);
\draw[->] (4.15, 1.1) -- (4.4 , 0.4);
\node at (7,1.5) {$ab = \vartheta(a)(\omega)$};
\node at (7,0) {$aba = \vartheta^2(a)(\omega)$};
\end{tikzpicture}
\caption{Left hand side: Illustration of the selected individuals of the first two generations constructed from $\omega$ in Example~\ref{EX:RFib-p-space}. The types of the individuals are colored in red. The tree on the right displays the choices of $\omega_I$ for $I = \langle a1 \rangle $ (second line) and for $I \in \mc I_1^{\omega}$ (third line).}
\end{figure}


Given $u = u_1 \cdots u_m$, we will next discuss how to embed random variables $\vartheta^n(u_j)$ for $j \in \{1,\ldots,m\}$ into the probability space $(\Omega_u, \mc F_u, \mathbb P_u)$. Note that we can partition $\mc I = \cup_{j=1}^m \mc I(j)$ with $I \in \mc I(j)$ if and only if $I$ is of the form $I = \langle u_j j \cdots \rangle$. This leads to a decomposition $\Omega_u = \Omega_1 \times \cdots \times \Omega_m$, as well as $\mc B_u = \mc B_1 \otimes \cdots \otimes \mc B_m$ and $\mathbb P_u = \mathbb P_1 \otimes \cdots \otimes P_m$, where $(\Omega_j, \mc B_j, \mathbb P_j)$ is the probability space constructed from $\mc I(j)$ for all $j \in \{1,\ldots, m\}$. Note that $(\Omega_j, \mc B_j, \mathbb P_j)$ is just a copy of the probability space $(\Omega_{u_j}, \mc B_{u_j}, \mathbb P_{u_j})$. Thus, $\vartheta^n(u_j)$ is well-defined on $(\Omega_j, \mc B_j, \mathbb P_j)$ and fulfils the defining relations of the corresponding Markov chain. For $\omega = (\omega_1, \ldots, \omega_m) \in \Omega_u$, we define $\vartheta^n(u_j)(\omega) = \vartheta^n(u_j)(\omega_j)$ for all $j \in \{1,\ldots,m\}$. Because of the product structure of $(\Omega_u,\mc B_u,\mathbb P_u)$, the random variables $\vartheta^n(u_j)$ and $\vartheta^n(u_k)$ are independent for every $n \in \N$ and $j \neq k$. Finally note that for $\omega \in \Omega_u$, $\mc I_n^{\omega} = \cup_{j = 1}^m ( \mc I_n^{\omega} \cap \mc I(j) )$ and for $j < k$ all the elements in $\mc I_n^{\omega} \cap \mc I(j)$ are smaller than those in $\mc I_n^{\omega} \cap \mc I(k)$. Since $\vartheta^n(u_j)$ depends only on the individuals in $\mc I_{n-1}^{\omega} \cap \mc I(j)$, this implies
\[
\vartheta^n(u)(\omega) = (\vartheta^n(u_1) \cdots \vartheta^n(u_m) )(\omega)
\]
for all $\omega \in \Omega_u$ and $n \in \N$.

\section*{Acknowledgements} 

\noindent It is a pleasure to thank Michael Baake and Dan Rust for helpful discussions.
 
This work is supported by the German Research Foundation (DFG) via
the Collaborative Research Centre (CRC 1283) through the faculty of Mathematics, Bielefeld University. Also, PG was partially supported by the Research Centre of Mathematical Modelling (RCM$^2$) of Bielefeld University, while TS was also supported by NSERC via grant 03762-2014.

\end{document}